\documentclass[a4paper,11pt]{amsart}
\usepackage[a4paper,margin=31mm]{geometry}
\usepackage{amsmath,amssymb,mathtools}
\usepackage{enumitem}
\usepackage{hyperref}
\usepackage[T1]{fontenc}
\usepackage{lmodern}

\newtheorem{theorem}{Theorem}[section]
\newtheorem{proposition}[theorem]{Proposition}
\newtheorem{lemma}[theorem]{Lemma}

\theoremstyle{definition}
\newtheorem{definition}[theorem]{Definition}
\theoremstyle{remark}

\numberwithin{equation}{section}

\newcommand{\N}{\mathbb N}
\newcommand{\Z}{\mathbb Z}
\newcommand{\R}{\mathbb R}
\newcommand{\C}{\mathbb C}

\newcommand{\E}{\mathbb E}
\newcommand{\e}{\mathrm e}
\newcommand{\1}{\mathbf 1}
\newcommand{\poly}{\operatorname{poly}}
\newcommand{\Lip}{\operatorname{Lip}}
\newcommand{\TV}{\operatorname{TV}}
\newcommand{\lcm}{\operatorname{lcm}}

\newcommand{\cC}{\mathcal C}
\newcommand{\eps}{\varepsilon}
\newcommand{\asum}{\sideset{}{^{\ast}}\sum}

\title[Quantitative Gowers uniformity in short intervals]
{Quantitative Gowers uniformity of the primes\\
in intervals of length $X^{5/8+\varepsilon}$}

\author{Joni Ter\"av\"ainen}
\address{Department of Pure Mathematics and Mathematical Statistics,  University of Cambridge, CB3 0WB, UK}
\email{joni.p.teravainen@gmail.com}

\begin{document}
\begin{abstract}
We prove the first quantitative bounds for the Gowers norms of the von Mangoldt function minus a Siegel corrected model function in short intervals.  The result applies to intervals $(X,X+H]$ with $H\geq X^{5/8+\varepsilon}$ and gives quasipolynomial savings for the $U^k(X,X+H]$ norm.  The main new ingredient is an efficient non-abelian Type II inverse theorem with explicit dependence on the nilsequence dimension.  We prove this inverse theorem by applying Leng's efficient equidistribution theorem directly to an unweighted corner correlation in four variables, hence replacing the factorisation and subgroup argument in the non-abelian Type II proof of Matom\"aki, Shao, Tao and the author.

In joint work with Florian Richter, we use this quantitative result as one ingredient in proving the existence of infinitely many sum-product patterns of the form $\{x,x+y,xy\}$ in the shifted primes $\mathbb{P}-1$. 
\end{abstract}
\maketitle

\section{Statement of the result}

In this paper, we establish quantitative Gowers norm bounds for the von Mangoldt function $\Lambda$, suitably normalised, in short intervals. To state such results, we first need to define a suitable model function to which $\Lambda$ can be compared. For $w\geq 2$, we first define the \emph{Cram\'er model} $\Lambda_w$ by
\begin{equation*}
 \Lambda_w(n)=\frac{P(w)}{\varphi(P(w))}\1_{(n,P(w))=1},\qquad P(w)=\prod_{p\leq w}p.
\end{equation*}
To refine this model, we take into account the possible Siegel zero of $L$-functions. 
A level $w$ \emph{Siegel zero} is a real zero $\beta$ of some Dirichlet $L$-function $L(s,\chi)$ satisfying $\beta\geq1-c_0/\log w$, where $c_0>0$ is a small absolute constant and $\chi$ is a primitive real Dirichlet
character modulo an integer $q\leq w$ called the \emph{exceptional conductor}.  The Landau--Page theorem
\cite[Corollary 11.10]{MV} gives at most one such pair $(\beta,\chi)$.  We then define the \emph{Siegel model} $\widetilde{\Lambda}_w$ by
\begin{equation*}
 \widetilde\Lambda_w(n)=\Lambda_w(n)\bigl(1-\chi(n)n^{\beta-1}\bigr)
\end{equation*}
when the pair $(\beta,\chi)$ exists, and we set $\widetilde\Lambda_w=\Lambda_w$ otherwise.

We will consider the Gowers norms of $\Lambda-\widetilde{\Lambda}_w$ over short intervals. For $k\geq1$ and a finitely supported function $f\colon \Z\to\C$, we define the integer Gowers norm
\begin{equation*}
 \|f\|_{U^k(\Z)}
 \coloneqq \left(
 \sum_{n,h_1,\ldots,h_k\in\Z}
 \prod_{\omega\in\{0,1\}^k}
 \cC^{|\omega|}f(n+\omega_1h_1+\cdots+\omega_kh_k)
 \right)^{1/2^k},
\end{equation*}
where $\cC$ denotes complex conjugation.
If $I\subset\Z$ is a nonempty finite interval and $f\colon I\to\C$, we extend $f$ by zero outside $I$ and define the normalised interval Gowers norm
by
\begin{equation*}
 \|f\|_{U^k(I)}
 \coloneqq \frac{\|f\1_I\|_{U^k(\Z)}}{\|\1_I\|_{U^k(\Z)}}.
\end{equation*}
These quantities are norms when $k\geq2$ and seminorms when $k=1$.

With this notation, it was shown in~\cite[Proposition 4.6]{MTW} that the normalised von Mangoldt function obeys the quantitative Gowers norm bound
\begin{align}\label{eq:quantgowers}
\|\Lambda-\widetilde{\Lambda}_{w}\|_{U^k[X]}\ll_{k} \exp(-(\log w)^{c_k})    
\end{align}
in the range $\exp((\log \log X)^{1/c_k})\leq w\leq \exp((\log X)^{1/10})$ for some $c_k>0$. This is a slight extension of the work of Leng~\cite{Leng} that obtained quasipolynomial bounds for the Gowers norms of the primes, using the Leng--Sah--Sawhney quantitative inverse theorem~\cite{LSS} as one input. This improved on the work of Tao and the author~\cite{TT}, which in turn improved on the original Green--Tao--Ziegler qualitative bound in~\cite{GT-linear},~\cite{GT-Mobius},~\cite{GTZ} (with these earlier works using slightly different normalisations).

For Gowers norms over short intervals, a qualitative bound
\begin{align}\label{eq:shortgowers}
\|\Lambda-\widetilde{\Lambda}_{w}\|_{U^k(X,X+H]}=o_{w\to \infty}(1)+o_{X\to \infty; w}(1)    
\end{align}
was proved by Matom\"aki, Shao, Tao and the author~\cite{MSTT}, in the range $H\geq X^{5/8+\varepsilon}$, with $\varepsilon>0$ fixed (using in fact $\Lambda_w$ as the model function, but this is easily seen to be equivalent in the qualitative setting). We remark that inserting the Leng--Sah--Sawhney quasipolynomial inverse theorem into the arguments in~\cite{MSTT} does not lead to an immediate efficient quantification of the argument due to the need to track the dimension dependence of the nilsequences involved and, most importantly, the weak dimension dependencies in the Type II non-abelian nilsequence arguments of that paper.  

In this paper, we give a different, quantitative proof of this Type II estimate and prove the following simultaneous strengthening of~\eqref{eq:quantgowers} and~\eqref{eq:shortgowers}.

\begin{theorem}[Short interval quantitative uniformity]
\label{thm:corrected-main}
Let $k\geq2$ be an integer, let $\eps>0$, and set $a_k=2^{-k-2}$. There is some $c=c(k,\eps)>0$ such that the following holds.
If
\begin{equation}
 \label{eq:main-H-range}
2\leq  X^{5/8+\eps}\leq H\leq X
\end{equation}
and
\begin{equation}
 \label{eq:main-w-range}
 2\leq w\leq\exp\bigl((\log X)^{1/10}\bigr),
\end{equation}
then
\begin{equation}
 \label{eq:corrected-main-bound}
 \|\Lambda-\widetilde\Lambda_w\|_{U^k(X,X+H]}
 \ll_{k,\eps}w^{-a_k}+\exp\bigl(-(\log X)^c\bigr).
\end{equation}
\end{theorem}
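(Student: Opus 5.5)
We will follow the architecture of \cite{MSTT}, making every step quantitative with polynomial dependence on the nilsequence complexity. The plan has four steps.

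\textbf{Step 1: reduction to nilsequence correlations.} Write $f=\Lambda-\widetilde\Lambda_w$ restricted to $(X,X+H]$. Suppose $\|f\|_{U^k(X,X+H]}\ge\delta$, where
\[\delta\ge w^{-a_k}+\exp(-(\log X)^{c}).\]
We cannot apply the Leng--Sah--Sawhney inverse theorem \cite{LSS} to $f$ directly, because $f$ is unbounded. We therefore first pass to the majorant $\Lambda_w+\widetilde\Lambda_w$ and use a dense-model / transference step; a pseudorandom majorant is available at short-interval scale since $w$ is small.

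The conclusion is a nilsequence $F(g(n)\Gamma)$ on a nilmanifold of step $\le k-1$, with
\[\text{dimension}\le \log(1/\delta)^{O_k(1)},\qquad \text{complexity}\le \exp(\log(1/\delta)^{O_k(1)}),\]
such that
\[\Bigl|\sum_{X<n\le X+H} f(n)\overline{F(g(n)\Gamma)}\Bigr|\ge \exp(-\log(1/\delta)^{O_k(1)})\,H.\]
We then split $f$ into its two parts:
\begin{itemize}
\item for $\widetilde\Lambda_w$ versus the true main term, use the $W$-trick and equidistribution;
\item for $\Lambda$, use a Heath-Brown identity to reduce to Type I, Type I$_2$ and Type II sums with the $X^{5/8+\eps}$ ranges from \cite{MSTT}.
\end{itemize}

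\textbf{Step 2: the main-term comparison.} Apply Leng's efficient equidistribution theorem to factor $g=\epsilon g' \gamma$. Here $g'$ is polynomially equidistributed on a subnilmanifold and $\gamma$ is rational of small period $q$. On progressions $n\equiv a \pmod{q}$ we then need two things:
\begin{itemize}
\item short-interval prime number theorems in progressions, using zero-density estimates (Huxley range $7/12<5/8$) and the Siegel correction $\chi(n)n^{\beta-1}$ from the exceptional zero;
\item equidistribution of $g'$, which makes the correlation with the mean-zero part $F-\int F$ small.
\end{itemize}
The discrepancy between $\Lambda_w$ at level $w$ and the true singular series gives the $w^{-a_k}$ term, via a standard fundamental-lemma sieve estimate. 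The exponent $2^{-k-2}$ arises from converting an $L^1$-type error into the $U^k$ norm.

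\textbf{Step 3: Type I and Type I$_2$ sums.} Summing $F(g(dm)\Gamma)$ over $m$ in an interval of length $H/d$ is handled by the quantitative Leng equidistribution theorem. If the sum is large, the horizontal characters of $g(d\cdot)$ are small, which forces the correlation into the major-arc situation of Step 2. The Type I$_2$ sums (divisor-type, range up to $X^{5/8}$) are treated similarly via a two-dimensional equidistribution theorem. All of these bounds are polynomial in the complexity.

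\textbf{Step 4: the non-abelian Type II sums (main obstacle).} This is where \cite{MSTT} lost quantitative control. After Cauchy--Schwarz in $m$, a large Type II sum $\sum_{m\sim M,\ n\sim N}a_m b_n F(g(mn)\Gamma)$ gives a large unweighted four-variable corner correlation
\[\sum_{m,m',n,n'} F(g(mn)\Gamma)\overline{F(g(m'n)\Gamma)}\,\overline{F(g(mn')\Gamma)}F(g(m'n')\Gamma).\]
Instead of the factorisation-and-subgroup induction, we view this as a single polynomial sequence on $G^4$, restricted to the appropriate subgroup, in four variables. We then apply Leng's multiparameter equidistribution theorem once. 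The result is a horizontal character $\eta$ of bounded height such that $\eta\circ g$ evaluated on $(mn,m'n,mn',m'n')$ is nearly constant.

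Combining the four coordinates and using the multiplicative structure shows that $\eta(g(n))$ is smooth in the short-interval sense. A degree-lowering / induction on dimension then reduces the problem to a lower-dimensional nilmanifold. Each induction step costs a polynomial factor, and there are at most $\log(1/\delta)^{O(1)}$ steps, so the total loss is quasipolynomial.

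Finally, we feed this back into Step 2 with $\delta=\exp(-(\log X)^{c})$ and obtain \eqref{eq:corrected-main-bound}.
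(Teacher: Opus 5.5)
Your proposal has a genuine gap: Step 4 descends on dimension, and the losses from that iteration are not quasipolynomial.

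\textbf{Why dimension descent fails.} You argue that each step costs a polynomial factor and there are $\log(1/\delta)^{O(1)}$ steps, so the total loss is quasipolynomial. But the losses compound rather than add. On a nilmanifold of dimension $D$ and complexity $M$, Leng's theorem and the subgroup factorisation output characters and complexity of size $(M/\delta)^{D^{O(1)}}$. So each step multiplies $\log M$ by $D^{O(1)}$, and $D\approx(\log 1/\delta)^{O(1)}$ steps give $\exp(D^{O(D)}\log M)$. That is doubly exponential in a power of $\log(1/\delta)$; even a fixed power $M\mapsto M^{C}$ per step would give $M^{C^{D}}$.

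The same objection applies to the equidistributed factorisation $g=\varepsilon g'\gamma$ in your Step 2, which can only be built by such an iteration. In any case, equidistribution of $g'$ alone does not control the correlation of $\Lambda-\widetilde\Lambda_w$ with $F-\int F$.

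\textbf{What the paper does instead.} It descends on the \emph{step}, at most $k-1$ times. After the Leng--Sah--Sawhney theorem (applied directly to $(\Lambda-\Lambda_w^\dagger)/O(\log X)$), a vertical Fourier reduction either lowers the step or gives a vertical nilcharacter of nonzero frequency $\xi$ on a nilmanifold with one-dimensional $G^{(s)}$. Leng's bracket conclusion $\xi([w_1,\ldots,w_s])=0$ on the common kernel then forces that kernel to have step at most $s-1$. Each Type I, Type $I_2$ and non-abelian Type II correlation thus yields a lower-step factorisation of $g$ on $(X,X+H]$. This requires concatenating the local factorisations of the dilates $g(a(r_a+q_a\,\cdot))$, which your Step 3 omits. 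At step $0$ the nilsequence is constant, and the short-interval estimate in progressions with the Siegel correction gives the contradiction. No equidistributed factorisation is ever needed.

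\textbf{The Type II step does not give smoothness of $\eta\circ g$.} Leng's theorem on $(g(ab),g(a'b),g(ab'),g(a'b'))$ outputs characters $\lambda=(\lambda_1,\lambda_2,\lambda_3,\lambda_4)$ on $G^4$. These could all be of the form $(\eta,-\eta,-\eta,\eta)$ with $\eta\circ g$ locally like $T\log n$: the corner of such a phase is essentially constant, yet $\eta\circ g$ is far from smooth.

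The missing idea is the bracket condition for $\Xi=(\xi,-\xi,-\xi,\xi)$. Set $\theta^{\mathrm{row}}=\lambda_1+\lambda_3$ and $\theta^{\mathrm{col}}=\lambda_1+\lambda_2$; these are smooth along dilates and can be concatenated. For $z_i$ in the common kernel of the row and column characters, the elements $w_1=(z_1,z_1,1,1)$, $w_2=(z_2,1,z_2,1)$ and $w_i=(z_i,z_i,1,1)$ for $i\geq3$ lie in $\bigcap_j\ker\lambda_j$. They satisfy $[w_1,\ldots,w_s]=([z_1,\ldots,z_s],1,1,1)$, so $\xi([z_1,\ldots,z_s])=0$ on that kernel.

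\textbf{The abelian case is missing.} The commutator argument needs $s\geq2$. When $G$ is abelian the $n^{iT}$ obstruction is genuine, and your plan never treats it. You need:
\begin{enumerate}
\item the abelian Type II theorem, to reduce to $\sum f(n)n^{iT}$;
\item high-frequency Dirichlet polynomial bounds with quasipolynomial savings (via Vinogradov--Korobov), to exclude large $|T|$;
\item for small $|T|$, a transfer back to $\sum_{n\in P'}(\Lambda-\widetilde\Lambda_w)(n)$ over a short progression, which the Huxley-type major arc estimate kills.
\end{enumerate}

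\textbf{Two smaller points.} No transference is needed in Step 1, since $|\Lambda-\widetilde\Lambda_w|\ll\log X$ on the interval. The $w^{-a_k}$ term does not come from a fundamental-lemma error. It comes from comparing $\widetilde\Lambda_w$ with $\widetilde\Lambda_z$ at $z=\exp((\log X)^{1/10})$; the worst case is an exceptional conductor $q\in(w,z]$, controlled by the decay of $\|\chi\|_{U^k(\Z/q\Z)}$ in $q$.
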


\subsection{Applications and extensions} The quantification of the short interval Gowers norm estimate for the primes in~\cite{MSTT} naturally leads to quantifications of many consequences of such arguments, such as asymptotics for the linear pattern sums of the form
\begin{align*}
\sum_{\mathbf n\in(X,X+H]^d\cap\Z^d}
\Lambda(L_1(\mathbf n))\cdots\Lambda(L_k(\mathbf n)),
\end{align*}
with $(L_1,\ldots, L_k)$ a finite complexity system of affine-linear forms in $d$ variables, where qualitative estimates were obtained in~\cite[Theorem 1.7]{MSTT}. Here, the input from the current paper should produce arbitrary power of logarithm savings on the error term in the local-to-global asymptotic. We leave the details of this to the interested reader. 

The main new application where qualitative estimates would not suffice is in a joint work with Florian Richter~\cite{RT}, where we use this result as one ingredient for showing the existence of infinitely many sum-product patterns $\{x,x+y,xy\}$, with $x,y\in \mathbb{N}$ arbitrarily large, in the set of shifted primes $\mathbb{P}-1$. 

Finally, most of the arguments in this paper are not specific to the von Mangoldt function, and one would expect that the same methods can be used to prove quantitative short interval Gowers norm bounds also for some other arithmetic functions, such as the M\"obius function (as in the qualitative setting in~\cite{MSTT}). There are however some minor technicalities to do with proving major arc estimates for the normalised function $\mu-\mu_{\textnormal{Siegel}}$, where the Siegel model $\mu_{\textnormal{Siegel}}$ is defined in~\cite{TT}. We leave the details of resolving these to the interested reader.

\subsection{Outline of the proof}

We give a brief outline of the main ideas in the proof of Theorem~\ref{thm:corrected-main}. 

For simplicity, we assume in this outline that $w=\exp((\log X)^{1/10})$ and $X^{5/8+\varepsilon}\leq H\leq X^{1-\varepsilon}$. The extension of the roughness parameter range is handled by estimating the Gowers norm of the difference between model functions with different parameters, and the length extension is handled by splitting long intervals into shorter ones. 

The starting point of the argument is the quasipolynomial inverse theorem of Leng, Sah and Sawhney~\cite{LSS}. Applied to the $1$-bounded function $(\Lambda-\widetilde{\Lambda}_w)/(2\log X)$ on $(X,X+H]$, this shows that the only way~\eqref{eq:corrected-main-bound} can fail is if there exists a nilmanifold $G/\Gamma$ of step and degree at most $k-1$, of complexity $\leq \exp((\log X)^{c'})$ and dimension $\leq (\log X)^{c'}$, and a $1$-bounded nilsequence $F(g(\cdot)\Gamma)$ of Lipschitz norm $\leq \exp((\log X)^{c'})$ such that
\begin{align}\label{eq:lambda-correlate}
\left|\sum_{X<n\leq X+H}(\Lambda-\widetilde{\Lambda}_w)(n)F(g(n)\Gamma)\right|\geq \exp(-(\log X)^{c'})H    
\end{align}
where $c'<\sqrt{c}$ and $c>0$ is small enough in terms of $k,\varepsilon$. A vertical Fourier reduction either lowers the step directly or allows us to assume that $F$ is a vertical nilcharacter of nonzero frequency $\xi$ and that the last nontrivial lower central subgroup of $G$ is one-dimensional.

As in~\cite{MSTT}, we apply Heath-Brown's identity to decompose $\Lambda-\widetilde{\Lambda}_w$ into type I, Type $I_2$ and Type II sums; see Definition~\ref{def:type-sums} (we also obtain character-twisted type $I$ sums in the case of a Siegel zero; we ignore this technicality here). 
The number of terms in this decomposition is polylogarithmic, so the pigeonhole principle provides us with a function $f$ that is one of these three types, with either good $\ell^2$ norm control or total variation norm control for each convolution factor, such that we have the nilsequence correlation bound
\begin{align}\label{eq:fFsum}
\left|\sum_{X<n\leq X+H}f(n)F(g(n)\Gamma)\right|\geq \exp(-(\log X)^{c'})H.        
\end{align}
In the Type I, Type $I_2$ and non-abelian Type II cases,
we then seek from the assumption~\eqref{eq:fFsum} a factorisation through a lower step nilmanifold:
\begin{align}\label{eq:single-factorisation}
g=\varepsilon g_*\gamma,\qquad \varepsilon, \gamma\in \poly(\mathbb{Z},G),\qquad g_{*}\in \poly(\mathbb{Z},G_*),    
\end{align}
with appropriate smoothness bounds on $\varepsilon$, rationality bounds on $\gamma$, complexity bounds on the new nilmanifold $G_*/\Gamma_*$, and crucially, lower step for $G_{*}$ than $G$. As in the work of Leng~\cite{Leng}, this descent on step rather than dimension is crucial for preserving the good shape of savings, as the step is bounded whereas the dimension can be large. 

The argument then naturally splits into four steps:
\begin{enumerate}[label=\textnormal{(\roman*)}]
\item Type I correlation large $\implies$ factorisation through a lower step nilmanifold. 
\item Type $I_2$ correlation large $\implies$ factorisation through a lower step nilmanifold.
\item Type II: factorisation through a lower step nilmanifold and ruling out the abelian case.
\item Step descent and handling the step $0$ case.
\end{enumerate}

The need for quantitative dimension dependence for bounds leads to additional difficulties compared with~\cite{MSTT} when handling such steps\footnote{The inverse statements in that paper yield a horizontal character with controlled smoothness, which enables dimension reduction but not necessarily a step reduction.}, and in particular requires the use of the quantitative equidistribution theory of Leng~\cite{Leng}. The main new ingredients are
\begin{enumerate}
    \item Concatenation of local factorisations;

    \item A direct four-variable Type II argument;

    \item A quantitatively stronger abelian Type II argument, with the Siegel correction.
\end{enumerate}
We now explain these ingredients more precisely.

\emph{Ingredient (1).} In the Type I, Type $I_2$ and non-abelian Type II cases, we show that \eqref{eq:fFsum} forces a factorisation through a lower step nilmanifold. For Type I sums, applying the usual reductions to remove the bounded variation sequence at the cost of passing to progressions, and applying Leng's efficient equidistribution theorem (in the slightly modified form of Theorem~\ref{thm:Leng-equidistribution}), the conclusion from~\eqref{eq:fFsum} is that for many integers $a$ in a suitable range (depending on the Type I parameters), we have factorisations of the type
\begin{align*}
 g(a(r_a+q_an))=\varepsilon_a(n)g_a(n)\gamma_a(n),\qquad  r_a\in\Z,\qquad 1\leq q_a\leq\exp((\log X)^{c''}),   
\end{align*}
for some $c''>0$, where $\varepsilon_a$ is smooth, $\gamma_a$ is rational and $g_{a}$ takes values in a lower step subgroup $G_a$, which may depend on $a$. We then show in Lemma~\ref{lem:concatenate-dilated-factorisations} that this leads to a single factorisation~\eqref{eq:single-factorisation} for the whole polynomial sequence $g$, with control on the complexity bounds. To our knowledge, such arguments have not been stated before even in the qualitative setting, although the concatenation of smooth classical polynomial sequences, such as the projections $\eta\circ g$ to a horizontal character, is standard (and plays an important technical role here as well). For Type $I_2$ sums, after performing a hyperbola decomposition as in~\cite{MSTT}, and again applying Leng's theorem after some manipulations to remove bounded variation coefficients, the assumption~\eqref{eq:fFsum} leads, for many pairs $(b,q)$, to (essentially) factorisations of the type
\begin{align*}
g((m_{b,q}+nq)(n_{b,q}-nb))=\varepsilon_{b,q}(n) g_{b,q}(n)\gamma_{b,q}(n),  
\end{align*}
where $\varepsilon_{b,q}$ is smooth, $\gamma_{b,q}$ is rational and $g_{b,q}$ takes values in a lower step subgroup with bounds on its complexity. In Lemma~\ref{lem:concatenate-quadratic-path-factorisations}, we show that this again necessarily leads to a single factorisation~\eqref{eq:single-factorisation}. In the Type $I_2$ argument, this quadratic path concatenation is first applied to the dilated sequences $g(a\cdot)$, after which we concatenate in the outer variable $a$, similarly to the Type I case.

\emph{Ingredient (2).} After having handled the steps (i) and (ii), we are left with handling the Type II sum implication (iii). The handling of such sums in the non-abelian case is the most novel part of the argument, and here our arguments differ from the analysis of non-abelian Type II sums in~\cite{MSTT}, where one performed a reduction to a thin set and a dimension reduction argument. We instead argue as follows. After using Cauchy--Schwarz to remove the coefficients, we apply Leng's theorem directly to the polynomial sequence
\begin{align*}
 (g(ab),g(a'b),g(ab'),g(a'b'))\in \poly(\mathbb{Z}^4, G^4).   
\end{align*}
Leng's theorem provides a set of horizontal characters on $G^4$  whose compositions with this polynomial sequence have controlled $C^\infty$ norms, and it also gives a commutator vanishing condition on their common kernel in $G^4$ when composed with the vertical character $\Xi=(\xi,-\xi,-\xi,\xi)$. Now, if these horizontal characters are of the form $(\lambda_{1,j},\lambda_{2,j},\lambda_{3,j},\lambda_{4,j}),$
then looking at $\lambda_{1,j}+\lambda_{2,j}$ and $\lambda_{1,j}+\lambda_{3,j}$ and applying concatenation arguments eventually transfers this information back to $G$, and the commutator calculation isolates one coordinate to force a genuine drop in step.

\emph{Ingredient (3).} Using ingredient (2), we can complete the non-abelian case of step (iii) of the argument. For the abelian case, we can assume that $F(g(n)\Gamma)=b_0e(P(n))$ is a classical polynomial phase. 
The abelian Type II theorem of \cite{MSTT} reduces the phase, after splitting to progressions, to $n^{iT}$ times a factor of controlled total variation. Large $|T|$ is excluded using stronger high frequency estimates for the Type II component. For small $|T|$, we remove the phase from the original correlation with $\Lambda-\widetilde\Lambda_w$ by splitting into slightly shorter intervals and apply estimates in arithmetic progressions and short intervals incorporating the Siegel correction. This leads to the need for some strengthenings of arguments from~\cite{MSTT}; these are handled in Section~\ref{sec:short-arithmetic-models}.

Finally, by freezing the smooth and rational factors on suitable progressions, we transfer the original correlation~\eqref{eq:lambda-correlate} with $\Lambda-\widetilde\Lambda_w$ to a lower step nilsequence. We can iterate this, and at step zero the nilsequence is constant, so the estimates over short progressions give a contradiction;
see Section~\ref{sec:step-descent}.

\subsection{Acknowledgements} The author is grateful to Florian Richter for helpful comments and for inspiring the author to work on this problem. The author was supported by funding from the European Union's Horizon Europe research and innovation programme under ERC grant agreement no. 101162746.

The author used ChatGPT to assist with the initial drafting of some statements and with polishing and proofreading. The author reviewed all such suggestions and takes full responsibility for the mathematical statements, arguments, and final wording.

\section{Notation}
\label{subsec:notation}

We follow the general conventions of \cite[Section~1.5]{MSTT}. We write $\N=\{1,2,\ldots\}$ and $[H]=\{1,\ldots,\lfloor H\rfloor\}$ for real $H\geq1$. We write $\1_E$ for the indicator of a set $E$, and
$\e(t)=e^{2\pi i t}$ for $t\in\R$. For a nonempty finite set $S$, we write
$\E_{n\in S}f(n)\coloneqq |S|^{-1}\sum_{n\in S}f(n)$. If $A$ and $B$ are nonnegative quantities, then $A\ll_{\mathcal P}B$ means that $A\leq C_{\mathcal P}B$ for a constant depending only on
the parameters in $\mathcal P$.

If $u<v$, then $(u,v]$ denotes $(u,v]\cap\Z$ whenever it occurs as a summation set or an argument of a discrete norm.  For $x\in\R$, let $\|x\|_{\R/\Z}$ denote its distance from the nearest integer.  For
$x\in\R/\Z$, use the same notation for the distance of any representative from the nearest integer.

\emph{Smoothness norms.} We follow the convention in \cite[Section~2.2]{MRSTT}.  Let $d\geq1$ and $t\geq1$ be integers, let $\phi\colon\Z^t\to\R/\Z$ be a polynomial of degree at most $d$, and let $I=I_1\times\cdots\times I_t$ be a nonempty integer box.  Write $|\mathbf j|=\sum_{i=1}^t|j_i|$ for $\mathbf j\in\Z^t$, and let
$\partial_i\phi(\mathbf n)=\phi(\mathbf n+\mathbf e_i)-\phi(\mathbf n)$. We define
\begin{equation*}
 \|\phi\|_{C^\infty(I)}
 \coloneqq \max_{\substack{\mathbf j\in\Z_{\geq0}^t\\|\mathbf j|\leq d}}
 \ \max_{\mathbf n\in I}
 \left(\prod_{i=1}^t|I_i|^{j_i}\right)
 \|\partial_1^{j_1}\cdots\partial_t^{j_t}\phi(\mathbf n)\|_{\R/\Z}.
\end{equation*}
It is instructive to compare this norm with the coefficient norm in \cite[Definition 2.7]{GT-nilmanifold} and
\cite[Definition 2.2]{Leng}.  Write $I=\mathbf a+[N_1]\times\cdots\times[N_t]$, where $N_i\geq1$.
Then the binomial expansion of $\phi$ takes the form
\begin{equation*}
 \phi(\mathbf a+\mathbf n)=\sum_{\substack{\mathbf j\in\Z_{\geq0}^t\\|\mathbf j|\leq d}}
   \alpha_{\mathbf j}\prod_{i=1}^t\binom{n_i}{j_i},
\end{equation*}
and we define
\begin{equation*}
 \|\phi\|_{\mathrm{coef},I}\coloneqq\max_{\substack{\mathbf j\in\Z_{\geq0}^t\\0<|\mathbf j|\leq d}}
 \left(\prod_{i=1}^tN_i^{j_i}\right)
 \|\alpha_{\mathbf j}\|_{\R/\Z}.
\end{equation*}
Applying the one-variable calculation in \cite[Section~2.2]{MRSTT}
successively in each coordinate yields
\begin{equation}
 \label{eq:Cinfty-norm-comparison}
 \|\phi\|_{\mathrm{coef},I}\ll_{d,t}1+\|\phi\|_{C^\infty(I)},
 \qquad \|\phi\|_{C^\infty(I)}\ll_{d,t}1+\|\phi\|_{\mathrm{coef},I}.
\end{equation}

\emph{Maximal sums and total variation norms.}
If $P$ is a finite arithmetic progression and $a\colon P\to\C$, define the maximal sum by
\begin{equation*}
 \left|\sum_{n\in P}a(n)\right|^*
 \coloneqq \sup_{P'\subseteq P}\left|\sum_{n\in P'}a(n)\right|,
\end{equation*}
where $P'$ ranges over all arithmetic subprogressions of $P$.  We define the maximal sum to be $0$ when $P=\varnothing$.  We define the total variation norms by
\begin{align*}
 \|a\|_{\TV(P)}
 &\coloneqq \sup_{n\in P}|a(n)|
+\sum_{j=1}^{m-1}|a(n_{j+1})-a(n_j)|,\\
 \|a\|_{\TV(P;q)}
 &\coloneqq \sum_{b\bmod q}\|a\|_{\TV(P\cap(b+q\Z))},
\end{align*}
where $n_1<\cdots<n_m$ are the elements of $P$.  Empty progressions are defined to have total variation zero.  For $a\colon\N\to\C$ and $q\geq1$, set
\begin{equation*}
 \|a\|_{\TV(\N;q)}
 \coloneqq \sup_{N\geq1}\|a\|_{\TV([N];q)}.
\end{equation*}
If an arithmetic progression $P$ is written in its canonical form
$P=\{a,a+r,\ldots,a+(N-1)r\}$ with $N\geq2$ and $r\geq1$, we define
\begin{equation}\label{eq:modP}
 \textnormal{mod}(P)\coloneqq r,\qquad \operatorname{span}(P)\coloneqq (N-1)r+1.
\end{equation}
For a singleton $P$, define
$\textnormal{mod}(P)=\operatorname{span}(P)=1$.

\section{Quantitative nilsequence inputs}

In this section we collect various quantitative nilsequence inputs needed for the main theorem:
the quasipolynomial inverse theorem, a vertical reduction, and Leng's efficient equidistribution theorem with a factorisation conclusion. We explicitly control the dependence on dimension in various bounds.

\subsection{Nilsequence conventions}

\begin{definition}[Filtered nilmanifolds, degree, and complexity]
\label{def:filtered-nilmanifolds}
Let $d\geq0$ and $m\geq0$ be integers and let $M\geq1$.  A filtered nilmanifold $G/\Gamma$ of degree at
most $d$, dimension $m$, and complexity at most $M$ consists of the following data:
\begin{itemize}
    \item A connected, simply connected nilpotent Lie group $G$, equipped with a filtration $G_{\bullet}=(G_{i})_{i\geq 0}$ of closed connected subgroups
\[
 G=G_{0}=G_{1}\geq G_{2}\geq\cdots \geq G_{d+1}=\{\1_{G}\}
\]
such that we have the commutator relations
\[
 [G_{i},G_{j}]\subseteq G_{i+j}\qquad\text{for all }i\geq0\text{ and }j\geq0.
\]

\item A discrete cocompact subgroup $\Gamma\leq G$ such that
$\Gamma\cap G_i$ is discrete and cocompact in $G_i$ for every $i\geq0$.

\item A Mal'cev basis
$\mathcal X=(X_1,\ldots,X_m)$ of the Lie algebra $\log G$, with
\[
 \log G_i
 =\operatorname{span}_{\R}(X_{m-\dim G_i+1},\ldots,X_m),
\]
for all $i\geq 0$, with the empty span interpreted as $\{0\}$.  When $m=0$, the basis is empty and $G/\Gamma$ is the one-point nilmanifold. Moreover, the basis $\mathcal X$ has complexity at most $M$ in the sense that
\[
 [X_i,X_j]=\sum_{\ell>\max(i,j)}c_{ij\ell}X_\ell,
\]
where every $c_{ij\ell}$ is rational with numerator and denominator at most $M$ in absolute value when written in lowest terms, and 
\[
 \Gamma=\left\{\exp(n_1X_1)\cdots\exp(n_mX_m)\colon 
  n_1,\ldots,n_m\in\Z
 \right\}.
\]
\end{itemize}
\end{definition}
We use the right-invariant Mal'cev metric $d_G$ of \cite[Definition 3.4]{LSS} and the induced quotient metric $d_{G/\Gamma}$ on $G/\Gamma$. For $F\colon G/\Gamma\to\C$, we write
\[
 \|F\|_{\Lip}=\|F\|_\infty+\sup_{x\ne y}\frac{|F(x)-F(y)|}{d_{G/\Gamma}(x,y)}.
\]
We say that $F$ is $M$-Lipschitz if $\|F\|_{\Lip}\leq M$.

We will also need higher order commutators $[w_1,\ldots, w_s]$ for $s\geq 2$ and $w_1,\ldots, w_s$ in a Lie group $G$. These are defined as the iterated bracket $[[[w_1,w_2],w_3],\ldots, w_s]$. We also set $[w_1]=w_1$.

We then introduce several standard definitions, following conventions of~\cite{Leng} unless otherwise stated.

\begin{definition}[Polynomial sequences]
\label{def:polynomial-rationality}
Let $t\geq 1$ be an integer. For a map $g\colon \Z^t\to G$ and $\mathbf h\in\Z^t$, write
$\partial_{\mathbf h}g(\mathbf n)=g(\mathbf n+\mathbf h)g(\mathbf n)^{-1}$.
The map $g$ is polynomial for $G_{\bullet}$ if
$$\partial_{\mathbf h_j}\cdots\partial_{\mathbf h_1}g(\mathbf n)\in G_{j}$$
for every integer $j\geq0$, every
$\mathbf n\in\Z^t$, and every $\mathbf h_1,\ldots,\mathbf h_j\in\Z^t$. We denote the set of such polynomials by $\poly(\mathbb{Z}^t,G)$. Whenever we refer to a polynomial sequence on a filtered Lie group, we mean a polynomial sequence for the specified filtration. 
\end{definition}

We remark that our convention for rational sequences is slightly weaker than that of~\cite[Definition 2.6]{Leng}. Namely, the exponent $r\leq Q$ may depend on the value of the sequence. Lemma \ref{lem:periodicrational} states the quantitative consequence needed below.

\begin{definition}[Rationality]
For $Q\geq1$, a connected closed subgroup $H\leq G$ is \emph{$Q$-rational} if $\log H$ has a basis whose coordinates in $X_1,\ldots,X_m$ are rational numbers of height at most $Q$.  An element $\gamma\in G$ is $Q$-rational if $\gamma^r\in\Gamma$ for an integer $r$ satisfying $1\leq r\leq Q$.  A sequence is $Q$-rational if all
of its values are $Q$-rational.

A surjective Lie group homomorphism $\pi\colon G\to G'$ is a rational quotient map if its kernel is a connected rational normal subgroup of $G$.  We equip the quotient nilmanifold with lattice $\pi(\Gamma)$ and filtration $(\pi(G_{i}))_{i\geq 0}$.
\end{definition}

\begin{definition}[Step]
\label{def:step}
We write
\begin{equation*}
 G^{(1)}=G,\qquad G^{(i+1)}=[G,G^{(i)}]
 \quad\text{for }i\geq1
\end{equation*}
for the lower central series of $G$.  The step is the largest $s$ for which $G^{(s)}$ is nontrivial, and if $G$ is the one point group, we define it as zero.  Thus a nontrivial abelian group has step one.  When the step is $s\geq1$, $G^{(s)}$ is connected and central.
\end{definition}

\begin{definition}[Characters]\label{def:characters}
Let $G/\Gamma$ be a nilmanifold. A horizontal character is a continuous homomorphism $\eta\colon G\to\R$ satisfying $\eta(\Gamma)\subseteq\Z$. The height $|\eta|$ is the maximum absolute value of its coordinates in the dual Mal'cev basis. 

Suppose that $G$ has step $s\geq1$. A \emph{vertical character} is a continuous homomorphism $\xi\colon G^{(s)}\to\mathbb R$ such that $\xi(\Gamma\cap G^{(s)})\subseteq\mathbb Z$. Since $\xi(\Gamma\cap G^{(s)})\subseteq\mathbb Z$, the map $h\mapsto\e(\xi(h))$ descends to a character of $G^{(s)}/(\Gamma\cap G^{(s)})$. We define the size $|\xi|$ of $\xi$ by
\begin{align*}
 |\xi|=\sup_{\substack{h,h'\in G^{(s)}\\h\Gamma\ne h'\Gamma}}\frac{|\e(\xi(h))-\e(\xi(h'))|}{d_{G/\Gamma}(h\Gamma,h'\Gamma)}.
\end{align*}
\end{definition}

\begin{definition}[Nilcharacters]
Let $G/\Gamma$ be a nilmanifold of step $s\geq1$, and let $\xi\colon G^{(s)}\to\mathbb R$ be a vertical character. A function $F\colon G/\Gamma\to\C$ is a vertical nilcharacter of frequency $\xi$ if
\begin{equation}
 \label{eq:def-vertical-character}
 F(hx)=\e(\xi(h))F(x)\qquad\text{for }h\in G^{(s)},\ x\in G/\Gamma.
\end{equation}
\end{definition}

\begin{definition}[Smooth sequences]
\label{def:smooth-sequences}
For an integer $t\geq1$ and a box $\mathbf I=\prod_{i=1}^t(x_i+[N_i])\subset\Z^t$ with $N_i\geq1$,
a sequence $\varepsilon\colon\Z^t\to G$ is \emph{$(Q,\mathbf I)$-smooth} if
\begin{equation}
 \label{eq:box-smoothness}
 d_G(\varepsilon(\mathbf n),1_G)\leq Q,\qquad
 d_G(\varepsilon(\mathbf n),\varepsilon(\mathbf n-\mathbf e_i))
 \leq Q/N_i
 \quad\text{for every }\mathbf n\in\mathbf I\text{ and }1\leq i\leq t.
\end{equation}
\end{definition}

It is instructive to compare this notion of smoothness with~\cite[Definition 2.6]{Leng}.
For $t=1$, Definition \ref{def:smooth-sequences} agrees up to a factor $2$ with \cite[Definition 2.6]{Leng}. To see this, write $I=a+[N]$ and note that the convention of~\cite{Leng} requires
\begin{align*}
 d_G(\varepsilon(a),1_G)\leq Q,\qquad d_G(\varepsilon(a+n),\varepsilon(a+n-1))\leq Q/N\quad\text{for }n\in[N].
\end{align*}
Summing these bounds implies $d_G(\varepsilon(a+n),1_G)\leq2Q$ on $I$.  Conversely, Definition \ref{def:smooth-sequences} gives
\begin{align*}
 d_G(\varepsilon(a),1_G)\leq d_G(\varepsilon(a),\varepsilon(a+1))+d_G(\varepsilon(a+1),1_G)\leq2Q,
\end{align*}
so a sequence that is $(Q,I)$-smooth in our convention is $(2Q,I)$-smooth in the convention of~\cite{Leng}.

We also have the following simple lemma that connects rationality and periodicity.

\begin{lemma}[Periodicity of rational polynomial sequences]\label{lem:periodicrational} Let $G/\Gamma$ be a nilmanifold of degree $k\geq 1$, dimension $D\geq 1$ and complexity $M\geq 1$, and let $Q\geq 1$. If $\gamma\colon \Z\to G$ is a $Q$-rational polynomial sequence, then it is periodic modulo $\Gamma$ with period of size at most $\max(M,Q)^{(D+1)^{C_k}}$ for some $C_k>0$. 
\end{lemma}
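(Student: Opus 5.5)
We work in Mal'cev coordinates of the first kind relative to the basis $\mathcal X$. Write $\gamma(n)=\exp(\sum_i \psi_i(n)X_i)$, or better in coordinates of the second kind, $\gamma(n)=\exp(\psi_1(n)X_1)\cdots\exp(\psi_D(n)X_D)$. For a polynomial sequence on a filtered group of degree $k$, each coordinate $\psi_i$ is a polynomial in $n$ of degree at most $C_k$ with real coefficients. This is the standard fact that group multiplication in Mal'cev coordinates is polynomial of degree $O_k(1)$ with coefficients controlled by the structure constants. The plan is to show that each $\psi_i$ takes values in $\frac1{Q'}\Z$ for a controlled $Q'$. Then $\psi_i$ is a rational polynomial with a controlled denominator, hence periodic modulo $1$ with a controlled period, and this periodicity mod $1$ will be lifted to periodicity modulo $\Gamma$.

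The first step is to bound the denominators of the coordinates of a $Q$-rational element. If $\gamma^r\in\Gamma$ with $r\leq Q$, then $\log\gamma=\frac1r\log(\gamma^r)$. By the Baker--Campbell--Hausdorff formula, the first-kind coordinates of elements of $\Gamma$ lie in $\frac{1}{M'}\Z$ with $M'\leq M^{(D+1)^{O_k(1)}}$, since the step is at most $k$ and the nested brackets have structure constants of height at most $M$. Converting back to second-kind coordinates loses a similar factor. Hence every value $\gamma(n)$ has second-kind coordinates in $\frac1{Q'}\Z$ with $Q'\leq \max(M,Q)^{(D+1)^{O_k(1)}}$. The key point is that $Q'$ is uniform in $n$, even though $r$ may depend on $n$, because $r\leq Q$ always.

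The second step uses the integer-valued structure. Each $\psi_i$ is a polynomial of degree at most $C_k$ whose values on all integers lie in $\frac1{Q'}\Z$. By Newton interpolation, $\psi_i(n)=\sum_{j\le C_k} a_{ij}\binom{n}{j}$ with $a_{ij}\in\frac1{Q'}\Z$, since finite differences of values in $\frac1{Q'}\Z$ stay in $\frac1{Q'}\Z$. Then $\psi_i(n+P)\equiv\psi_i(n)\pmod{\Z\cdot \frac{1}{Q'}}$, and more strongly $Q'\psi_i(n+P)-Q'\psi_i(n)\in Q'\Z$, whenever $P$ is a multiple of $Q'^{2}\cdot C_k!$, say. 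Choosing such a $P$, the values $\gamma(n+P)$ and $\gamma(n)$ have second-kind coordinates differing by integers coordinatewise.

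The remaining step, and the main obstacle, is that coordinates differing by integers does not immediately give $\gamma(n+P)\in\gamma(n)\Gamma$ in a non-abelian group. We handle this by inducting down the Mal'cev basis. Using $\exp(tX_1)\cdots$ and the fact that $\exp(\Z X_i)\subset\Gamma$, we peel off one coordinate at a time: $\gamma(n)^{-1}\gamma(n+P)$ has first coordinate an integer, and multiplying by $\exp(-m X_1)$ lands in $G_{(2)}=\exp(\operatorname{span}(X_2,\dots,X_D))$. At each stage the conjugation error perturbs later coordinates only polynomially, with denominators bounded by powers of $M$ and $Q'$. To absorb these errors, we take $P$ to be a large power, $P=(M Q')^{(D+1)^{C_k}}\cdot C_k!$, so that the corrections in every later coordinate are integers as well. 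Tracking the heights through $D$ stages, each costing a factor $\max(M,Q')^{O_k(1)}$, yields a total period of at most $\max(M,Q)^{(D+1)^{C_k}}$ after enlarging $C_k$.

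An alternative, cleaner route for this step is available. The sequence $n\mapsto\gamma(n)\Gamma$ is a polynomial orbit taking values in the finite set of $Q'$-rational points of $G/\Gamma$ reachable this way. One then uses the fact that $\gamma(n+P)^{-1}\gamma(n)$ can be expanded via the Taylor coefficients of $\gamma$, which are themselves rational with controlled height. Either way, the quantitative content is the BCH height bookkeeping.
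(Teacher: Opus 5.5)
Your outline matches the paper's. You bound the denominators of the Mal'cev coordinates of the values of $\gamma$, pass to the binomial coefficients of the coordinate polynomials by finite differences, and convert periodicity of the coordinates into periodicity modulo $\Gamma$. The paper takes the first and last of these steps from Leng's Lemmas B.13(i) and B.14 instead of redoing the BCH bookkeeping.

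\textbf{The gap.} It is in the step you call the key point: $r\leq Q$ does not give a denominator $Q'$ that is both uniform in $n$ and polynomial in $\max(M,Q)$.
\begin{itemize}
\item Each value $\gamma(n)$ individually has coordinates in $\frac{1}{r_nM'}\Z$ with $r_n\leq Q$.
\item A single $Q'$ valid for all possible $r_n\leq Q$ must be divisible by $\lcm(1,\ldots,Q)=e^{(1+o(1))Q}$. Already for $G=\R$, $\Gamma=\Z$, every $1/r$ with $r\leq Q$ is $Q$-rational.
\item Carried through your Newton interpolation and choice of $P$, this gives a period exponential in $Q$, not $\max(M,Q)^{(D+1)^{C_k}}$.
\end{itemize}
This is exactly the subtlety the paper flags when it remarks that, in its convention, the exponent $r\leq Q$ may depend on the value of the sequence.

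\textbf{The repair.} This is what the paper does.
\begin{itemize}
\item Apply the denominator bound only at the $O_k(1)$ nodes $n=0,1,\ldots,k$ (or $0,\ldots,C_k$ with your degree bound).
\item Take $Q'=\prod_n r_nM'$ over these nodes; this is still at most $\max(M,Q)^{(D+1)^{O_k(1)}}$.
\item Since $\Delta^j\psi_i(0)$ is an integer combination of $\psi_i(0),\ldots,\psi_i(j)$, every coefficient $a_{ij}$ lies in $\frac1{Q'}\Z$.
\end{itemize}
Uniformity in $n$ then comes from the polynomial structure of $\gamma$, not from $r\leq Q$. With this change the rest of your argument goes through.

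\textbf{A simpler lift.} You can replace the coordinate-by-coordinate induction with one observation. The map $\psi(x^{-1}y)$ is a polynomial of degree $O_k(1)$ in $u=\psi(x)$ and $v=\psi(y)-\psi(x)$. Its coefficient denominators divide some integer $E\leq M^{(D+1)^{O_k(1)}}$, and it vanishes identically at $v=0$, so every monomial contains a factor of $v$. Hence it suffices to make every $\psi_i(n+P)-\psi_i(n)$ divisible by $N=EQ'^{O_k(1)}$. This holds whenever $NQ'\,C_k!\mid P$, because then $\binom{n+P}{j}\equiv\binom nj \pmod{NQ'}$ for all $j\leq C_k$. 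Then $\gamma(n)^{-1}\gamma(n+P)$ has integer coordinates and lies in $\Gamma$.

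One correction to your choice of period: take $P$ to be such a multiple, not a power of $MQ'$. The denominators of the structure constants are at most $M$ but need not divide any power of $M$.
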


\begin{proof}
Let $\psi\colon G\to\R^D$ be the coordinate map associated with the fixed Mal'cev basis. If $Q<2$, then
$\gamma(n)\in\Gamma$ for every $n$, so period $1$ suffices. Otherwise, applying the proof of \cite[Lemma B.13(i)]{Leng} separately to $\gamma(0),\ldots,\gamma(k)$ and taking the
product of the resulting denominators yields an integer
\begin{align*}
 Q_1\leq\max(M,Q)^{(D+1)^{O_k(1)}}
\end{align*}
such that
\begin{align*}
 \psi(\gamma(n))\in Q_1^{-1}\Z^D\qquad\text{for }0\leq n\leq k.
\end{align*}

We write the binomial basis expansion of $\psi\circ \gamma$ as
\begin{align*}
\psi(\gamma(n))=\sum_{j=0}^k\alpha_j\binom{n}{j},\qquad \alpha_j=\Delta^j(\psi\circ\gamma)(0),
\end{align*}
where $\Delta f(n)\coloneqq f(n+1)-f(n)$.  Taking discrete differences in this expansion of the polynomial $\psi\circ\gamma$ therefore produces the same denominator bound for its coefficients. The proof of
\cite[Lemma B.14]{Leng} then gives the stated period bound after increasing $C_k$.
\end{proof}

We use the quasipolynomial inverse theorem of Leng--Sah--Sawhney \cite[Theorem 1.2]{LSS} in the following normalised form.

\begin{lemma}[Quasipolynomial inverse theorem]
\label{lem:LSS-inverse}
For every integer $k\geq2$ there is $A_k\geq1$ with the following property.  Let $0<\delta\leq1/2$, let $I\subset\Z$ be a nonempty finite interval, and let $f\colon I\to\C$ be $1$-bounded with
$$\|f\|_{U^k(I)}\geq\delta.$$
Then there exist a filtered nilmanifold $G/\Gamma$, a polynomial sequence $g\colon \Z\to G$, a function
$F\colon G/\Gamma\to\C$, an integer $D\geq2$, and real parameters $M\geq2$ and $0<\rho\leq1$ with the following properties.
\begin{itemize}[leftmargin=2.2em]
 \item The nilmanifold $G/\Gamma$ has degree and step at most $k-1$, dimension at most $D$ and complexity at most $M$.
 \item The parameters satisfy
 \begin{equation}
  \label{eq:LSS-parameters}
  D\leq\bigl(\log(2/\delta)\bigr)^{A_k},\qquad
  M\leq\exp\!\left(\bigl(\log(2/\delta)\bigr)^{A_k}\right),\qquad \rho^{-1}\leq\exp\!\left(\bigl(\log(2/\delta)\bigr)^{A_k}\right).
 \end{equation}
 \item The function $F$ is $1$-bounded and $M$-Lipschitz, and
 \begin{equation}\label{eq:LSS}
  \left|\E_{n\in I}f(n)F(g(n)\Gamma)\right|\geq\rho.
 \end{equation}
\end{itemize}
\end{lemma}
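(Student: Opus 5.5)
This lemma is a renormalisation of \cite[Theorem 1.2]{LSS}, which is stated for $1$-bounded functions on $[N]$ with the Gowers norm normalised over $\Z/N\Z$ or $[N]$. The plan is to translate our interval $I$ to $[N]$, $N=|I|$, apply the theorem, translate back, and then repackage the conclusion into the parameters $D,M,\rho$.

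\emph{Translation.} Write $I=a+[N]$ and put $f_0(n)=f(a+n)$ on $[N]$. Since $\|\cdot\|_{U^k(\Z)}$ is translation invariant, we get $\|f_0\|_{U^k[N]}=\|f\|_{U^k(I)}\geq\delta$. If the LSS normalisation is the cyclic one on $\Z/\tilde N\Z$ with $\tilde N\asymp_k N$, embed $[N]$ into $\Z/\tilde N\Z$ with $\tilde N$ a prime between $2^kN$ and $2^{k+1}N$. The standard comparison $\|\1_{[N]}\|_{U^k(\Z)}^{2^k}\asymp_k N^{k+1}$ then shows the two normalisations differ by a factor $\gg_k1$, so the hypothesis holds with $\delta$ replaced by $c_k\delta$. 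This only changes $\log(2/\delta)$ by a constant factor, which is absorbed by enlarging $A_k$. If $N$ is bounded in terms of $k$ and $\delta$, take the trivial nilmanifold ($D=2$ with an abelian torus, or $F$ constant): then $|\E f F|$ is controlled by the $U^1$ norm, which is at least comparable to $\delta$ via monotonicity of the Gowers norms.

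\emph{Applying LSS.} The theorem gives a nilmanifold of degree $k-1$, dimension at most $(\log(1/\delta))^{O_k(1)}$ and complexity at most $\exp((\log(1/\delta))^{O_k(1)})$. It also gives a $1$-bounded Lipschitz $F$ with $\|F\|_{\Lip}$ bounded by the same quantity, and a polynomial sequence $g_0$ with $|\E_{n\in[N]}f_0(n)F(g_0(n)\Gamma)|\geq\exp(-(\log(1/\delta))^{O_k(1)})$. Set $g(n)=g_0(n-a)$. This is again polynomial for the same filtration, since shifting the argument preserves the condition in Definition~\ref{def:polynomial-rationality}, and it yields \eqref{eq:LSS} on $I$. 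The step is at most the degree, since $G^{(i)}\subseteq G_i$, so it is also at most $k-1$. Define $D$ as the maximum of $2$ and the dimension bound, $M$ as the maximum of $2$, the complexity bound and the Lipschitz bound, and $\rho$ as the correlation lower bound. Using $\log(1/\delta)\leq\log(2/\delta)$ and $\log(2/\delta)\geq\log 4>1$, all of these fit into \eqref{eq:LSS-parameters} for a suitable $A_k$.

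\emph{Main obstacle: convention matching.} The substance lies entirely in the cited theorem, so the real risk is a mismatch between LSS's conventions and Definition~\ref{def:filtered-nilmanifolds}: Mal'cev basis complexity, degree versus step, and the metric used in the Lipschitz norm. The plan is to check that LSS's complexity notion controls the rational structure constants with height $M$, and that their metric is the one of \cite[Definition 3.4]{LSS} used here. Any polynomial-in-$M$ or dimension-power discrepancy between these notions is absorbed into $\exp((\log(2/\delta))^{A_k})$.
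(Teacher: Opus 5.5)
Your route is the paper's: translate $I$ to $[|I|]$, apply \cite[Theorem 1.2]{LSS} with $s=k-1$, and repackage the parameters. Your remarks on the step being at most the degree, and on a possible cyclic normalisation, are fine. The one point you get wrong is what that theorem delivers. It gives a $K$-Lipschitz function $F'$, so only $\|F'\|_\infty\leq K$ is guaranteed. It also gives a lower bound for $|\E_{n}f(n)\overline{F'(g(n)\Gamma)}|$, not for the correlation with a $1$-bounded $F$.

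The paper's proof consists precisely of repairing this. Take $F=\overline{F'}/\max(1,\|\overline{F'}\|_\infty)$. This is $1$-bounded and still $\max(M,K)$-Lipschitz, at the cost of replacing $\rho$ by $\rho/K$. Since $K\leq\exp((\log(1/\delta))^{O_k(1)})$, the loss is absorbed into \eqref{eq:LSS-parameters}. Your ``convention matching'' step should include this normalisation and the conjugation.

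Separately, drop the small-$N$ fallback. As justified it is false. With $F$ constant the correlation is $|\E_{n\in I}f(n)|=\|f\|_{U^1(I)}$, and Gowers norms increase (up to constants) with $k$, so $\|f\|_{U^k(I)}\geq\delta$ gives no lower bound for it. For example, $f(n)=(-1)^n$ has $\|f\|_{U^k(I)}=1$ for $k\geq2$ but average essentially $0$.

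The fallback is also unnecessary. The cited theorem carries no largeness hypothesis on $N$, and the paper applies it directly without such a case distinction. Your prime $\tilde N\in(2^kN,2^{k+1}N]$ exists for every $N$ by Bertrand's postulate. If you did want a direct argument when $N\leq\exp((\log(2/\delta))^{O_k(1)})$, use $\|f\|_{U^k(I)}^{2^k}\ll_k\E_{n\in I}|f(n)|$ together with Parseval. This gives a linear phase with correlation $\gg\delta^{2^k}N^{-1/2}$, which is acceptable in that range.
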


\begin{proof}
We translate $I$ to $[|I|]$ and apply \cite[Theorem 1.2]{LSS} with $s=k-1$.  That theorem produces a $K$-Lipschitz function $F'$ such that~\eqref{eq:LSS} holds with $\overline{F'}$ in place of $F$.  We then set $F=\overline{F'}/\max(1,\|\overline{F'}\|_\infty)$ and enlarge $M$ to $\max(M,K)$ to obtain~\eqref{eq:LSS} with $\rho$ replaced by $\rho/K$.  Since $\max(1,\|\overline{F'}\|_\infty)\leq\max(1,K)$, the
bounds in \eqref{eq:LSS-parameters} follow after increasing $A_k$.
\end{proof}

The next lemma either transfers large correlation with a nilsequence over an interval to a quotient nilmanifold of smaller step, or replaces $F$ by a vertical nilcharacter of nonzero frequency on a quotient whose last nontrivial lower central subgroup is one-dimensional, with quantitative bounds.

\begin{lemma}[Efficient vertical reduction]
\label{lem:vertical-reduction}
Let $k\geq2$ and $1\leq s\leq k-1$ be integers.  There is $C_k\geq1$ such that the following holds.  Let
$0<\rho\leq1/2$, let $I\subset\Z$ be a nonempty finite interval, and let $f\colon I\to\C$ be $1$-bounded.
Let $D\geq2$ be an integer and $M\geq2$, and let $G/\Gamma$ be an $s$-step filtered
nilmanifold of degree at most $k-1$, dimension at most $D$, and complexity at most $M$.  Suppose that
$g\colon \Z\to G$ is a polynomial, $F\colon G/\Gamma\to\C$ is $1$-bounded and $M$-Lipschitz, and
\begin{equation*}
 \left|\E_{n\in I}f(n)F(g(n)\Gamma)\right|\geq\rho.
\end{equation*}
Then there is $Q$ satisfying
\begin{equation}
 \label{eq:vertical-cost}
 2\leq Q\leq(M/\rho)^{D^{C_k}}
\end{equation}
such that one of the following cases holds. 
\begin{enumerate}[label=\textnormal{(\roman*)}]
 \item \emph{Zero frequency case.} Let $\pi_0\colon G\to G/G^{(s)}$ be the quotient map $g\mapsto gG^{(s)}$, and set $G_0=G/G^{(s)}$, $\Gamma_0=\pi_0(\Gamma)$, and $g_0=\pi_0\circ g$. Define
 \begin{equation}\label{eq:verticalaverage}
  F_0(\pi_0(x)\Gamma_0)=\int_{G^{(s)}/(G^{(s)}\cap\Gamma)}
          F(hx\Gamma)\,\mathrm dh,
 \end{equation}
 where $\mathrm dh$ is normalised Haar probability measure. The quotient nilmanifold $G_0/\Gamma_0$ (with filtration $(G_{0,i})_{i\geq 0}=(\pi_0(G_{i}))_{i\geq 0}$) has degree at most $k-1$, step at most
 $s-1$, dimension at most $D$, and complexity at most $Q$, while $F_0$ is $1$-bounded with Lipschitz norm at most $Q$.  Moreover, we have
 \begin{equation*}
  \left|\E_{n\in I}f(n)F_0(g_0(n)\Gamma_0)\right|\geq\rho/Q.
 \end{equation*}
 \item \emph{Nonzero frequency case.}  There exist a rational quotient map $\pi\colon G\to G'$ and a quotient
 nilmanifold $G'/\Gamma'$, where $\Gamma'=\pi(\Gamma)$ and $G'_{i}=\pi(G_{i})$, of degree at most $k-1$, step
 $s$, dimension at most $D$, and complexity at most $Q$, whose $s$th lower central subgroup $(G')^{(s)}$ is one-dimensional, such that for some $1$-bounded $Q$-Lipschitz vertical nilcharacter $F'\colon G'/\Gamma'\to\C$ with a nonzero frequency $\xi$ we have
 \begin{equation*}
  |\xi|\leq Q,\qquad \left|\E_{n\in I}f(n)F'(g'(n)\Gamma')\right|\geq\rho/Q,
 \end{equation*}
 where $g'=\pi\circ g$.
\end{enumerate}
\end{lemma}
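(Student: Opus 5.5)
We will run the standard vertical Fourier decomposition along the central torus $T=G^{(s)}/(\Gamma\cap G^{(s)})$, keeping track of the dimension dependence throughout.

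\emph{Step 1: Fourier expansion along the vertical torus.} Write $F=\sum_{\xi}F_\xi$, where $\xi$ ranges over vertical characters of $G^{(s)}$ and
\[
F_\xi(x)=\int_{T}F(hx)\,\e(-\xi(h))\,\mathrm dh .
\]
The zero mode $F_0$ is exactly \eqref{eq:verticalaverage}. Since $F$ is $M$-Lipschitz, we will not use the full Fourier series. Instead we smooth $F$ vertically by convolving with a Fej\'er-type kernel on $T$ of width $\delta$. This changes $F$ by at most $O(M\delta)$ in sup norm, and the new function has only frequencies with $|\xi|\leq K$, where the number of such frequencies is $\ll (K)^{\dim T}$ and $K\approx (M/\delta)^{O(1)}$ times a factor polynomial in $D$ coming from comparing the Mal'cev metric with the torus coordinates. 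Each piece is a vertical nilcharacter of frequency $\xi$, and its Lipschitz norm is at most $(MK)^{O(D^{O(1)})}$. Here we use \cite{LSS}/\cite{Leng} bounds on the Mal'cev metric, which are polynomial in $M$ with exponent depending on $D$.

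\emph{Step 2: Choosing $\delta$ and pigeonholing.} We take $\delta\approx\rho/(4M)$. Then the smoothed function still has correlation at least $\rho/2$ with $f$, and the number of frequencies is at most $(M/\rho)^{O(D)}$. Pigeonholing over frequencies gives one frequency $\xi$ with $|\xi|\leq (M/\rho)^{O(1)}$ whose piece has correlation at least $\rho(M/\rho)^{-O(D)}$.

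If $\xi=0$, the piece is a $G^{(s)}$-invariant function, hence a function on $G_0/\Gamma_0$, and we are in case (i). For the complexity of $G_0/\Gamma_0$ we use the induced Mal'cev basis $X_1,\dots,X_{m-\dim G^{(s)}}$. This requires $G^{(s)}$ to be spanned by the last basis vectors, which may need passing to an adapted basis with rational change of coordinates of height $M^{D^{O(1)}}$, as in \cite[Appendix B]{Leng}. The step drops because $\pi_0(G)^{(s)}=\pi_0(G^{(s)})$ is trivial. The Lipschitz norm of $F_0$ is controlled by that of $F$ on the quotient, since the quotient metric is dominated by $d_G$ up to $M^{D^{O(1)}}$ factors.

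\emph{Step 3: The nonzero case.} If $\xi\neq0$, we quotient by the connected rational normal subgroup $\ker\xi\leq G^{(s)}$. This subgroup is central, and it is rational with height $(M|\xi|)^{O(D)}$ because $\xi$ is an integer vector in the dual lattice basis. The quotient $G'=G/\ker\xi$ then has one-dimensional $(G')^{(s)}=G^{(s)}/\ker\xi$, still of step exactly $s$. The piece $F_\xi$ is $\ker\xi$-invariant, so it descends to $G'/\Gamma'$, where it is a nilcharacter of nonzero frequency. Finally we normalise it to be $1$-bounded.

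\emph{Main obstacle.} The main obstacle is the quantitative bookkeeping in constructing Mal'cev bases adapted to the rational subgroups $G^{(s)}$ and $\ker\xi$, and in comparing metrics on quotients. Here we would invoke the rational basis lemmas in \cite[Appendix B]{Leng} (or \cite{GT-nilmanifold}) to get complexities of the form $(M/\rho)^{D^{C_k}}$. All other losses are of size $(M/\rho)^{O(D)}$, so collecting them gives \eqref{eq:vertical-cost}.
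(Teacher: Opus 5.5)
Your proposal is correct and follows essentially the same route as the paper. Both arguments expand $F$ along the vertical torus of $G^{(s)}$ into polynomially many frequencies, pigeonhole a single frequency, and in the nonzero case quotient by the connected central rational subgroup $\ker\xi$, so that the last lower central subgroup becomes one-dimensional. The paper takes the expansion from \cite[Lemma A.6]{Leng} rather than building the Fej\'er smoothing by hand.

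There are two small differences. The paper first tests the correlation with the vertical average $F^{(0)}$ directly and expands only $F-F^{(0)}$, whereas you pigeonhole over all frequencies including $\xi=0$. Your version is equally valid, since the smoothing kernel has mass one and so the zero mode of the smoothed function is exactly the average in \eqref{eq:verticalaverage}. Also, the losses you estimate as $(M/\rho)^{O(D)}$ are really $(M/\rho)^{D^{O(1)}}$ once the Mal'cev-metric comparisons are included, which is still within \eqref{eq:vertical-cost}.
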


\begin{proof}
The subgroup $G^{(s)}$ is $M^{D^{O_k(1)}}$-rational by the remark following \cite[Lemma A.6]{Leng}. By \cite[Lemma 3.10]{LSS}, quotienting by $G^{(s)}$ gives complexity bounds of the same form, and the corresponding Lipschitz bound for functions invariant under $G^{(s)}$.

We define $F^{(0)}(x\Gamma)=F_0(\pi_0(x)\Gamma_0)$ using the vertical average in~\eqref{eq:verticalaverage}. This average is invariant under left translation by \(G^{(s)}\) by the invariance of the Haar measure, so it defines a function on $G/G^{(s)}$. The function $F^{(0)}$ is $1$-bounded, and has Lipschitz norm at
most $M$.  If the normalised correlation of $f$ with $F^{(0)}$ is at least $\rho/2$, \cite[Lemma 3.10]{LSS} gives case~\textnormal{(i)}, after enlarging $Q$ to satisfy \eqref{eq:vertical-cost}.

Otherwise, the normalised correlation of $f$ with $(F-F^{(0)})/2$ is at least $\rho/4$.  This function is $1$-bounded, and it has Lipschitz norm at most $M$ and zero average on every
$G^{(s)}$-orbit.  We apply the vertical Fourier expansion of \cite[Lemma A.6]{Leng} to the action of
$G^{(s)}$ with uniform approximation error $\rho/100$.  The zero Fourier coefficient vanishes, so the approximation has the form
\begin{equation*}
 \frac{F-F^{(0)}}2=E+\sum_{\xi\in\Xi}F_\xi,\qquad
 \|E\|_\infty\leq\rho/100,\qquad F_\xi(hx)=\e(\xi(h))F_\xi(x),
\end{equation*}
where every $\xi$ is nonzero, and the number and sizes of the frequencies, the Lipschitz norms of the $F_\xi$, and
$\sum_{\xi\in\Xi}\|F_\xi\|_\infty$ are all at most $(M/\rho)^{D^{C_k}}$ after increasing $C_k$. 
By the triangle inequality, 
\begin{equation*}
 \sum_{\xi\in\Xi}\left|\E_{n\in I}f(n)F_\xi(g(n)\Gamma)\right|
 \geq\rho/4-\rho/100\geq\rho/5.
\end{equation*}
Hence, for some $\xi\in\Xi$, the function
$\widetilde F=F_\xi/\max(1,\|F_\xi\|_\infty)$ satisfies
\begin{equation*}
 \left|\E_{n\in I}f(n)\widetilde F(g(n)\Gamma)\right|\geq\rho/(5W),
\end{equation*}
where 
\begin{equation*}
 W=\sum_{\xi\in\Xi}\max(1,\|F_\xi\|_\infty)\ll (M/\rho)^{D^{C_k}}.
\end{equation*}

To apply \cite[Lemma 2.2]{Leng}, set
\begin{equation*}
 K=\ker\xi\leq G^{(s)},\qquad G'=G/K,\qquad \Gamma'=\pi(\Gamma),
\end{equation*}
where $\pi\colon G\to G/K$ is the quotient map $g\mapsto gK$. Since $G^{(s)}$ is connected and $\xi$ is integer-valued on $G^{(s)}\cap\Gamma$, its kernel $K$ is a connected rational subgroup. Since $K\subseteq G^{(s)}$ and $G^{(s)}$ is central, $K$ is a normal subgroup of $G$. Moreover, $\widetilde F(kx)=\widetilde F(x)$ for $k\in K$, since $\xi(k)=0$. Hence $\widetilde F$ descends to $G/K$, while $\xi$ descends to a nonzero character $\xi'$ on $G^{(s)}/K$. As $K=\ker\xi$, the quotient $G^{(s)}/K$ is one-dimensional. Finally,
\[
(G/K)^{(s)}=G^{(s)}/K,
\]
so this is precisely the last nontrivial lower central subgroup of $G'$. 

Now, the formulas
\begin{equation*}
 \begin{gathered}
 F'(\pi(x)\Gamma')=\widetilde F(x\Gamma),\qquad g'=\pi\circ g,\\
 \xi'(\pi(h))=\xi(h)\quad\text{for all }h\in G^{(s)}
 \end{gathered}
\end{equation*}
define a function, a polynomial sequence, and a nonzero vertical frequency on the quotient $G'/\Gamma'$.  Taking $Q\geq5W$ and applying the cited lemma gives the bounds in case~\textnormal{(ii)}, after increasing $C_k$ in \eqref{eq:vertical-cost}.
\end{proof}

The following definition relating to factorisations of polynomial sequences will appear in several key statements later on.

\begin{definition}[Factorisation on an interval]
\label{def:factorisation-package}
Let $G/\Gamma$ be a filtered nilmanifold, let $I=u+[N]$ with $N\geq1$, and let $g\in\poly(\Z,G)$. For $Q\geq2$, a
\emph{$Q$-factorisation} of $g$ on $I$ through $H\leq G$ consists of polynomial sequences $\varepsilon,h,\gamma\colon\Z\to G$ satisfying
\begin{equation*}
 g(n)=\varepsilon(n)h(n)\gamma(n)\quad\text{for every }n\in I,\qquad
 h\in\poly(\Z,H),
\end{equation*}
where $H$ is connected and $Q$-rational, $\varepsilon$ is $(Q,I)$-smooth, and $\gamma$ is $Q$-rational.
\end{definition}

\begin{lemma}[Factorisation through a common kernel]
\label{lem:character-factorisation}\label{lem:commonkernel}
Let $d\geq1$ and $t\geq1$ be integers. There is $C=C(d,t)\geq1$ with the following property. Let $D\geq2$ be an integer, let $M\geq2$ and $Q\geq2$, and let $G/\Gamma$ be a filtered nilmanifold of degree at most $d$, dimension at most $D$, and complexity at most $M$. Let $g\colon\Z^t\to G$ be polynomial, and let
\begin{align*}
 \mathbf I=\prod_{i=1}^t(x_i+[N_i]),\qquad \mathbf x\in\Z^t,\qquad N_i\in\N\quad\text{for every }1\leq i\leq t.
\end{align*}
Let $r\geq1$ be an integer, and suppose that $\eta_1,\ldots,\eta_r$ are linearly independent nonzero horizontal characters satisfying
\begin{align*}
 |\eta_j|\leq Q,\qquad \|\eta_j\circ g\|_{C^\infty(\mathbf I)}\leq Q
 \quad\text{for every }1\leq j\leq r.
\end{align*}
Set
\begin{align*}
 R=(MQ)^{D^C},\qquad H=\bigcap_{j=1}^r\ker\eta_j.
\end{align*}
Then $H$ is connected and $R$-rational, and there are polynomial sequences $\varepsilon,\gamma\colon\Z^t\to G$ and $h\in\poly(\Z^t,H)$ such that
\begin{align*}
 g(\mathbf n)=\varepsilon(\mathbf n)h(\mathbf n)\gamma(\mathbf n)\quad\text{for every }\mathbf n\in\mathbf I,
\end{align*}
where $H$ carries the induced filtration $(H\cap G_i)_{i\geq0}$, $\varepsilon$ is $(R,\mathbf I)$-smooth, and $\gamma$ is $R$-rational. The nilmanifold $H/(H\cap\Gamma)$ has degree at most $d$, dimension at most $D$, and complexity at most $R$ in an induced rational basis. When $t=1$, this is an $R$-factorisation of $g$ on $\mathbf I$, and $\gamma(\cdot)\Gamma$ has period at most $R$.
\end{lemma}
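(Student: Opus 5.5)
The plan is to reduce to the standard factorisation argument of Green--Tao \cite{GT-nilmanifold}, in the quantitative form of Leng \cite[Appendix B]{Leng}, tracking the dimension dependence.

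\emph{Rationality of $H$.} Since the $\eta_j$ have integer coordinates of height at most $Q$ in the dual Mal'cev basis, $\log H$ is the kernel of an $r\times D$ integer matrix with entries at most $Q$ ($r\leq D$). Cramer's rule gives a rational basis of $\log H$ of height at most $(DQ)^{O(D)}$. The Mal'cev basis $\mathcal X$ of complexity $M$ converts between exponential and Lie algebra coordinates by polynomial maps. By \cite[Lemma A.6 and Appendix B]{Leng}, $H$ is then a connected closed subgroup. It is $(MQ)^{D^{O(1)}}$-rational and admits a Mal'cev basis adapted to $(H\cap G_i)_i$ of complexity at most $R$. Connectedness follows because $H$ is the kernel of the homomorphism $(\eta_1,\dots,\eta_r)\colon G\to\R^r$, which vanishes on $[G,G]$: the kernel contains $[G,G]$, and its image in the abelianisation $\R^{D'}$ is a linear subspace. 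The degree and dimension bounds for $H/(H\cap\Gamma)$ are immediate.

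\emph{Factorisation.} Write the polynomial $\eta_j\circ g(\mathbf n)=\sum_{\mathbf i}\alpha_{j,\mathbf i}\binom{\mathbf n-\mathbf x}{\mathbf i}$. By \eqref{eq:Cinfty-norm-comparison}, $\|\alpha_{j,\mathbf i}\|_{\R/\Z}\ll Q\prod_i N_i^{-i_i}$ for $\mathbf i\neq0$. Split each $\alpha_{j,\mathbf i}=a_{j,\mathbf i}+\beta_{j,\mathbf i}$ with $a_{j,\mathbf i}\in\Z$ and $\beta_{j,\mathbf i}$ small. Next, write $g$ in Taylor form $g(\mathbf n)=\prod_{\mathbf i}g_{\mathbf i}^{\binom{\mathbf n-\mathbf x}{\mathbf i}}$ with $g_{\mathbf i}\in G_{|\mathbf i|}$. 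Then solve, degree by degree, for smooth Taylor coefficients $\varepsilon_{\mathbf i}\in G_{|\mathbf i|}$ and rational ones $\gamma_{\mathbf i}\in G_{|\mathbf i|}$ (of height $R$) satisfying $\eta_j(\varepsilon_{\mathbf i})=\beta_{j,\mathbf i}$ and $\eta_j(\gamma_{\mathbf i})=a_{j,\mathbf i}$ up to the rational shift. This is a linear system in the horizontal coordinates, again solvable by Cramer's rule with denominators $Q^{O(D)}$. Integer targets can be realised by rational elements because the $\eta_j$ are integral on $\Gamma$ and the matrix has bounded height. The $\eta_j$ factor through the horizontal torus $G/[G,G]$. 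So set $\varepsilon(\mathbf n)=\prod\varepsilon_{\mathbf i}^{\binom{\mathbf n-\mathbf x}{\mathbf i}}$, $\gamma$ similarly, and $h=\varepsilon^{-1}g\gamma^{-1}$. Then $\eta_j\circ h=0$, so $h$ takes values in $H$, and $h$ is polynomial for the induced filtration by the Lazard--Leibman group property of $\poly(\Z^t,G)$ together with $H\cap G_i$ membership.

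\emph{Main obstacle.} The delicate part is the quantitative bookkeeping, not the algebra. Smoothness of $\varepsilon$ on the box requires bounding $d_G$ of products of $\varepsilon_{\mathbf i}^{\binom{\cdot}{\mathbf i}}$ with $d(\varepsilon_{\mathbf i},1)\ll R\prod N_i^{-i_i}$; this needs the metric comparison lemmas of \cite{LSS}, \cite{Leng} with constants $M^{D^{O(1)}}$. Rationality of $\gamma$ requires that products of $R$-rational elements stay $R^{O(1)}$-rational, via the rational-element lemmas of \cite[Appendix B]{Leng} with dimension dependence $M^{D^{O(1)}}$. For $t=1$, the period bound for $\gamma(\cdot)\Gamma$ then follows from Lemma~\ref{lem:periodicrational}, after enlarging $C$ so that $\max(M,R)^{(D+1)^{C_d}}\leq (MQ)^{D^{C}}$.
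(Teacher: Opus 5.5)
Your outline is sound, but it follows a different route from the paper. The paper does not rebuild the factorisation. It gets connectedness from $\bigcap_j\ker\eta_j=\exp(\bigcap_j\ker\mathrm d\eta_j)$; your abelianisation argument also works, since $[G,G]$ is connected. It gets $R$-rationality of $H$ from Cramer's rule, as you do. It then applies Leng's factorisation lemma \cite[Lemma A.1]{Leng} to $g_{\mathbf x}(\mathbf m)=g(\mathbf x+\mathbf m)$ as a black box. That lemma directly supplies $h\in\poly(\Z^t,H)$, the rationality of $\gamma$, the complexity of $H/(H\cap\Gamma)$, and smoothness of $\varepsilon$ in Leng's convention. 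The only remaining work is to convert that smoothness (a bound at the corner of the box plus step bounds $R_0/N_i$) into the box convention of Definition~\ref{def:smooth-sequences} by summing along a coordinate path. The period bound then comes from Lemma~\ref{lem:periodicrational}, as in your last step.

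You instead reprove Lemma A.1 by splitting the Taylor coefficients of $\eta_j\circ g$ into integer and small parts. This is self-contained and shows where the $(MQ)^{D^{O(1)}}$ bounds come from. The cost is that you must redo the bookkeeping the citation absorbs: smoothness of $\prod_{\mathbf i}\varepsilon_{\mathbf i}^{\binom{\mathbf n-\mathbf x}{\mathbf i}}$ on the box, and rationality of $\prod_{\mathbf i}\gamma_{\mathbf i}^{\binom{\mathbf n-\mathbf x}{\mathbf i}}$ uniformly in $\mathbf n$.

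One step of your reconstruction needs more than Cramer's rule. For $\varepsilon$ and $\gamma$ to be polynomial for $G_\bullet$ you need $\varepsilon_{\mathbf i},\gamma_{\mathbf i}\in G_{|\mathbf i|}$. So both targets must lie in $V_{|\mathbf i|}=(\eta_1,\ldots,\eta_r)(G_{|\mathbf i|})$, and for $r\geq2$, $|\mathbf i|\geq2$ this can be a proper rational subspace of $\R^r$. The vector $(\alpha_{j,\mathbf i})_j$ lies in $V_{|\mathbf i|}$, but its coordinatewise integer rounding need not. For example, if $V_{|\mathbf i|}=\{(u,2u)\}$ and $u=0.3$, rounding gives $(0,1)\notin V_{|\mathbf i|}$. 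The fix is a case split.

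\emph{Large box.} If $\prod_iN_i^{i_i}$ exceeds a suitable $(DQ)^{O(D)}$, the rounding error is smaller than $(DQ)^{-O(D)}$. That is a lower bound for the distance from any integer point off $V_{|\mathbf i|}$ to $V_{|\mathbf i|}$. So the integer part automatically lies in $V_{|\mathbf i|}\cap\Z^r$, which contains $(\eta_j)_j(\Gamma\cap G_{|\mathbf i|})$ with bounded index. Hence it is realised by a rational $\gamma_{\mathbf i}\in G_{|\mathbf i|}$.

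\emph{Small box.} Otherwise, write $g_{\mathbf i}=\prod_\ell\exp(t_\ell X_\ell)$ in Mal'cev coordinates. Take $\gamma_{\mathbf i}=\prod_\ell\exp(\lfloor t_\ell\rfloor X_\ell)\in\Gamma\cap G_{|\mathbf i|}$ and $\varepsilon_{\mathbf i}=\prod_\ell\exp(\{t_\ell\}X_\ell)$, which is bounded. This suffices because the required bound $R/\prod_iN_i^{i_i}$ is then large after enlarging $R$.

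With this made explicit, your argument goes through.
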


\begin{proof}
The exponential map $\exp\colon\log G\to G$ is a diffeomorphism since $G$ is connected, simply connected and nilpotent. We have $\eta(\exp X)=\mathrm d\eta(X)$ for $X\in\log G$, so $\exp X\in\ker\eta$ if and only if $X\in\ker\mathrm d\eta$. Hence
\begin{equation*}
 \bigcap_{j=1}^r\ker\eta_j=\exp\left(\bigcap_{j=1}^r\ker \mathrm d\eta_j\right).
\end{equation*}
The intersection on the right is a vector subspace of $\log G$, and hence is connected. Hence, the common kernel is connected as an exponential of a connected set. 

We set $g_{\mathbf x}(\mathbf m)=g(\mathbf x+\mathbf m)$. Set $T=C_{d,t}(1+Q)$, where $C_{d,t}$ is
large enough for \eqref{eq:Cinfty-norm-comparison}. We apply the factorisation theorem of \cite[Lemma A.1]{Leng} to $g_{\mathbf x}$ and the independent characters with size bound $Q$ and inverse smoothness parameter
$\delta_A^{-1}=T$. That lemma furnishes polynomial sequences $\varepsilon_{\mathbf x},h_{\mathbf x},\gamma_{\mathbf x}$, where $h_{\mathbf x}\in\poly(\Z^t,H)$ and
\begin{equation*}
 g_{\mathbf x}(\mathbf m)=\varepsilon_{\mathbf x}(\mathbf m)
 h_{\mathbf x}(\mathbf m)\gamma_{\mathbf x}(\mathbf m)\quad\text{for every }\mathbf m\in\prod_{i=1}^t[N_i].
\end{equation*}
By \cite[Lemma A.1]{Leng}, $\gamma_{\mathbf x}$ is $R_0$-rational, $\varepsilon_{\mathbf x}$ is $(R_0,(N_1,\ldots,N_t))$-smooth in Leng's convention, and the induced nilmanifold has complexity at most
\begin{equation*}
 R_0=(C_{d,t}MQT)^{D^{c'_{d,t}}}
\end{equation*}
for a suitable $c'_{d,t}>0$. Since
$\log H=\bigcap_{j=1}^r\ker\mathrm d\eta_j$ and the independent integer forms $\mathrm d\eta_j$ have coefficients bounded by $Q$, Cramer's rule gives a basis of $\log H$ with rational coordinate heights at most $(DQ)^{O(D)}\leq R_0$. Thus $H$ is $R_0$-rational. We translate the three polynomial sequences by $\mathbf x$ to obtain
the stated factorisation.

To verify \eqref{eq:box-smoothness}, we use the coefficient bounds in the construction of $\varepsilon_{\mathbf x}$ in \cite[Lemma A.1]{Leng}. These bounds and the metric comparison in \cite[Lemma B.3]{Leng} give, after increasing $c'_{d,t}$ in the definition of $R_0$,
\begin{align}\label{eq:dG}
 d_G(\varepsilon_{\mathbf x}(\mathbf0),1_G)\leq R_0,\qquad d_G(\varepsilon_{\mathbf x}(\mathbf m),\varepsilon_{\mathbf x}(\mathbf m-\mathbf e_i))\leq R_0/N_i
\end{align}
whenever $1\leq m_i\leq N_i$ and $0\leq m_j\leq N_j$ for $j\neq i$. We can take a coordinate path from $\mathbf0$ to any $\mathbf m\in\prod_i[N_i]$, and summing~\eqref{eq:dG} along this path implies
\begin{align*}
 d_G(\varepsilon_{\mathbf x}(\mathbf m),1_G)\leq R_0+\sum_{i=1}^tm_iR_0/N_i\leq(t+1)R_0.
\end{align*}
Translating by $\mathbf x$ gives $(R,\mathbf I)$-smoothness after enlarging $R$. 

When $t=1$, Lemma \ref{lem:periodicrational} bounds the period of $\gamma(\cdot)\Gamma$ by $(MR_0)^{D^{c_d}}$. Increasing $C=C(d,t)$ so that
\begin{equation*}
 Q\leq R,\qquad
 (t+1)R_0\leq R,\qquad
 (MR_0)^{D^{c_d}}\leq R,
\end{equation*}
we obtain all the asserted bounds. 
\end{proof}

After applying Lemma~\ref{lem:LSS-inverse}, an important input for obtaining quantitative Gowers norm bounds is Leng's efficient equidistribution theorem. We state it in a form that additionally gives a factorisation of the relevant polynomial sequence. 

\begin{theorem}[Leng's efficient equidistribution with factorisation]
\label{thm:Leng-equidistribution}
Let $d\geq 1$ and $t\geq 1$ be integers. There is $C=C(d,t)\geq1$ with the following property. Let $D\geq2$ be an integer and let $M\geq2$. Let $G/\Gamma$ have degree at most $d$, step $s\geq1$, dimension at most
$D$, and complexity at most $M$. Let $g\colon\Z^t\to G$ be polynomial. Write
\begin{equation*}
 \mathbf I=\prod_{i=1}^t(x_i+[N_i]),\qquad \mathbf x=(x_1,\ldots,x_t)\in\Z^t,\qquad
 N_i\geq2\quad\text{for every }1\leq i\leq t.
\end{equation*}
Let $0<\delta<1/10$, and let $F\colon G/\Gamma\to\C$ be a $1$-bounded, $M$-Lipschitz vertical nilcharacter with nonzero frequency $\xi$ satisfying $|\xi|\leq M/\delta$. Suppose that
\begin{equation}
 \label{eq:Lengdelta}
 \left|\E_{\mathbf n\in\mathbf I}F(g(\mathbf n)\Gamma)\right|\geq\delta.
\end{equation}
Set $Q=(M/\delta)^{D^C}$. Then one of the following holds:
\begin{enumerate}[label=\textnormal{(\roman*)}]
    \item We have 
    $$\min_iN_i\leq Q.$$
\item There 
are a connected $Q$-rational subgroup $G_*\leq G$ and polynomial sequences $\varepsilon,\gamma\colon\Z^t\to G$ and
$g_*\in\poly(\Z^t,G_*)$ such that
\begin{equation}
 \label{eq:Leng-factorisation}
 g(\mathbf n)=\varepsilon(\mathbf n)g_*(\mathbf n)\gamma(\mathbf n)
 \quad\text{for every }\mathbf n\in\mathbf I.
\end{equation}
The nilmanifold $G_*/(G_*\cap\Gamma)$, with the induced filtration $(G_{*,i})_{i\geq 0}=(G_*\cap G_i)_{i\geq 0}$, has degree at most $d$, dimension at most $D$, and complexity at most $Q$ in an induced rational basis. The sequence $\gamma$ is $Q$-rational, $\varepsilon$ is $(Q,\mathbf I)$-smooth in Definition \ref{def:smooth-sequences}, and when $t=1$, $\gamma(\cdot)\Gamma$ is periodic with
period at most $Q$. When $t=1$, \eqref{eq:Leng-factorisation} is a $Q$-factorisation of $g$ on $\mathbf I$ through $G_*$. Moreover, for every $s$-tuple of elements $w_1,\ldots,w_s$ of $G_*$, we have
\begin{equation}
 \label{eq:Leng-bracket-conclusion}
 \xi([w_1,\ldots,w_s])=0.
\end{equation}
If $G^{(s)}$ is one-dimensional, then $G_*$ has step at most $s-1$.
\end{enumerate}
\end{theorem}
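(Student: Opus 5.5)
The plan is to deduce the theorem from Leng's efficient equidistribution theorem \cite[Theorem 3]{Leng}, in its multiparameter form with the commutator conclusion. Lemma~\ref{lem:commonkernel} then converts the resulting horizontal characters into a factorisation. First translate the box to $\prod_i[N_i]$ by setting $g_{\mathbf x}(\mathbf m)=g(\mathbf x+\mathbf m)$, which changes neither the average~\eqref{eq:Lengdelta} nor any of the conclusions. Suppose $\min_iN_i>Q$, since otherwise (i) holds. Leng's theorem, applied with $F$ a vertical nilcharacter of frequency $\xi$ with $|\xi|\leq M/\delta$ and correlation $\geq\delta$, gives linearly independent horizontal characters $\eta_1,\ldots,\eta_r$. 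These satisfy $|\eta_j|\leq Q_0=(M/\delta)^{D^{C_0}}$ and $\|\eta_j\circ g_{\mathbf x}\|_{C^\infty}\leq Q_0$, where we use the comparison~\eqref{eq:Cinfty-norm-comparison} to pass between Leng's coefficient norms and our $C^\infty$ norms. They also have the property that $\xi([w_1,\ldots,w_s])=0$ for all $w_1,\ldots,w_s\in\bigcap_j\ker\eta_j$. The degenerate case $r=0$ has to be ruled out. There Leng's theorem asserts that $\xi$ annihilates all $s$-fold commutators of $G$. These commutators generate $G^{(s)}$, which forces $\xi=0$ and contradicts the nonzero frequency. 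So $r\geq1$.

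Next, apply Lemma~\ref{lem:commonkernel} with $Q_0$ in place of $Q$. This produces $G_*=H=\bigcap_j\ker\eta_j$, which is connected and $R$-rational with $R=(MQ_0)^{D^{C}}$. It also gives the factorisation $g=\varepsilon g_*\gamma$ on $\mathbf I$, with $\varepsilon$ $(R,\mathbf I)$-smooth and $\gamma$ $R$-rational, and the induced nilmanifold has degree $\leq d$, dimension $\leq D$ and complexity $\leq R$. When $t=1$ it gives period $\leq R$. Since $M\leq M/\delta$, after enlarging $C$ we get $R\leq Q=(M/\delta)^{D^C}$. The bracket condition~\eqref{eq:Leng-bracket-conclusion} is exactly the commutator output of Leng's theorem. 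For $t=1$ the factorisation is a $Q$-factorisation by Definition~\ref{def:factorisation-package}.

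For the final claim, suppose $G^{(s)}$ is one-dimensional. Then $\xi$ is a nonzero linear functional on a one-dimensional space, so it is injective on $G^{(s)}$. Every $s$-fold commutator of elements of $G_*$ lies in $G^{(s)}$ and is killed by $\xi$, hence is trivial. The group $G_*^{(s)}$ is generated by such commutators, so it is trivial, and $G_*$ has step at most $s-1$.

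The main obstacle is bookkeeping rather than conceptual. One must check that Leng's multiparameter version (on boxes of unequal side lengths $N_i$) is available with dimension dependence $D^{O(1)}$, and that the thresholds match. In particular, the hypothesis $\min_iN_i>Q$ must absorb the range restriction in Leng's theorem, where the $N_i$ are required to be large. If only the equal-length case is directly available, I would reduce to it. One option is to rescale via subboxes. The other is to apply the one-parameter theorem along each coordinate and combine the results using the $C^\infty$ norm structure. This second route is where the losses would need care.
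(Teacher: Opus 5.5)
Your proposal is correct and follows essentially the same route as the paper: translate the box, apply Leng's multiparameter theorem (the paper cites \cite[Theorem 4]{Leng}, which it applies directly on $[N_1]\times\cdots\times[N_t]$ with unequal sides, so the reduction to equal side lengths that you worry about is not needed), and feed the resulting characters into Lemma~\ref{lem:commonkernel}. The step drop then follows from the injectivity of $\xi$ on the one-dimensional $G^{(s)}$. Your explicit check that $r\geq1$, via $\xi$ vanishing on the generating $s$-fold commutators of $G^{(s)}$, is a small addition that the paper leaves implicit. The paper also passes to a maximal linearly independent sublist instead of assuming independence from the start, which leaves the common kernel unchanged.
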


\begin{proof}
We apply \cite[Theorem 4]{Leng} to $g_{\mathbf x}(\mathbf m)=g(\mathbf x+\mathbf m)$ and $F$ on
$[N_1]\times\cdots\times[N_t]$. Let $Q_0=(M/\delta)^{D^{c_{d,t}}}$ dominate the interval length and
horizontal character bounds of that theorem. That result either gives 
$$\min_iN_i\leq Q_0\leq Q,$$ which suffices, or it produces nonzero horizontal characters $\eta_1,\ldots,\eta_r$ with $1\leq r\leq D$ such that 
\begin{equation*}
 |\eta_j|\leq Q_0,\qquad\|\eta_j\circ g\|_{C^\infty(\mathbf I)}\leq Q_0
 \quad\text{for }1\leq j\leq r,
\end{equation*}
 using \eqref{eq:Cinfty-norm-comparison}. We take a maximal linearly independent
sublist of $\eta_1,\ldots,\eta_r$ and continue to denote this list by the same notation. Since horizontal characters are real-valued homomorphisms, the common kernel
\begin{equation*}
G_*=\bigcap_{j=1}^r\ker\eta_j
\end{equation*}
is unchanged by the operation of taking this sublist. The same theorem gives
\eqref{eq:Leng-bracket-conclusion} for $G_*$.

We apply Lemma \ref{lem:character-factorisation} to $g$ on $\mathbf I$ with character bound $Q_0$. That lemma gives \eqref{eq:Leng-factorisation}, with $G_*$ connected and all rationality, complexity and smoothness parameters and period bounded by
\begin{align*}
 (MQ_0)^{D^{C_{d,t}}}\leq Q,
\end{align*}
after increasing $C=C(d,t)$ in the definition of $Q$. When $t=1$, the same applies to the rational coset period.

Finally, when $G^{(s)}$ is one-dimensional, the nonzero homomorphism $\xi$ is injective on $G^{(s)}$. Every $s$-fold commutator in $G_*$ belongs to $G^{(s)}$, so \eqref{eq:Leng-bracket-conclusion} implies that it is
trivial.
\end{proof}

\section{Type I and Type \texorpdfstring{$I_2$}{I2} estimates in short intervals}

In this section we establish versions of the Type I and Type $I_2$ arguments in \cite[Sections~5 and~8]{MSTT} that have an explicit dependence on dimension.  We first formulate the precise convolution classes used in the decomposition of the von Mangoldt function. 

\begin{definition}[Type I, Type II, and Type $I_2$ sums]
\label{def:type-sums}
Let $0<\delta<1$, let $A_I\geq1$ and $A_{I_2}\geq1$, and let $1\leq A_{II}^-\leq A_{II}^+$.  For arithmetic functions $a,b\colon \N\to\C$, write
\[
 (a*b)(n)=\sum_{d\mid n}a(d)b(n/d).
\]
\begin{enumerate}[label=\textnormal{(\roman*)}]
 \item A $(\delta,A_I)$ Type I sum is an arithmetic function $f=\alpha*\beta$ such that $\alpha$ is supported on $[1,A_I]$ and satisfies
 \begin{equation}
  \label{eq:type-alpha-second-moment}
  \sum_{n\leq Y}|\alpha(n)|^2\leq\delta^{-1}Y\qquad\text{for }Y\geq1,
 \end{equation}
 and for some integer $1\leq q\leq\delta^{-1}$, we have the total variation bound
 \begin{equation}
  \label{eq:type-I-TV}
  \|\beta\|_{\TV(\N;q)}\leq\delta^{-1}.
 \end{equation}
 \item A $(\delta,A_{II}^-,A_{II}^+)$ Type II sum is an arithmetic function $f=\alpha*\beta$ such that $\alpha$ is supported on $[A_{II}^-,A_{II}^+]$, satisfies \eqref{eq:type-alpha-second-moment}, and
 \begin{equation}
  \label{eq:type-II-moments}
  \sum_{n\leq Y}|\beta(n)|^4\leq\delta^{-2}Y\qquad\text{for } Y\geq1.
  \end{equation}
 \item A $(\delta,A_{I_2})$ Type $I_2$ sum is an arithmetic function $f=\alpha*\beta_1*\beta_2$ such that $\alpha$ is supported on $[1,A_{I_2}]$ and satisfies \eqref{eq:type-alpha-second-moment},
 and for some integer $1\leq q\leq\delta^{-1}$, we have the total variation bounds
 \begin{equation}
  \label{eq:type-I2-TV}
  \|\beta_1\|_{\TV(\N;q)}\leq\delta^{-1},
  \qquad \|\beta_2\|_{\TV(\N;q)}\leq\delta^{-1}.
 \end{equation}
\end{enumerate}
\end{definition}

We note for later use that \eqref{eq:type-II-moments} and Cauchy--Schwarz immediately give
\begin{align*}
 \sum_{n\leq Y}|\beta(n)|^2\leq\delta^{-1}Y.
\end{align*}

\subsection{The Type I case}

We first isolate the key nilsequence input in the proof, which is concatenation of local factorisations on dilated
progressions. 

\begin{lemma}[Concatenation of dilated factorisations]
\label{lem:concatenate-dilated-factorisations}
Let $d\geq1$ be an integer. There is $C_d\geq1$ with the following property. Let $D\geq2$, $Q\geq M\geq2$, and $1\leq s\leq d$, and let $G/\Gamma$ be an $s$-step filtered nilmanifold of degree at most $d$,
dimension at most $D$, and complexity at most $M$. Let $g\colon\Z\to G$ be polynomial, $X\geq2$, $2\leq H\leq X$, and $A\geq1/2$. Suppose that $\mathcal A\subset(A,2A]\cap\N$ satisfies $|\mathcal A|\geq A/Q$. For every $a\in\mathcal A$, let $r_a\in\Z$ and $q_a,N_a\in\N$ satisfy
\begin{equation}
 \label{eq:dilated-factorisation-progression}
 B_a=r_a+q_a[N_a]\subset(X/a,(X+H)/a],\qquad 1\leq q_a\leq Q,\qquad N_a\geq H/(AQ),
\end{equation}
and suppose that $n\mapsto g(a(r_a+q_an))$ has a $Q$-factorisation on $[N_a]$ through a subgroup $G_a$ of step at most $s-1$,  written
\begin{equation}
 \label{eq:dilated-factorisation-input}
 g(a(r_a+q_an))=\varepsilon_a(n)g_a(n)\gamma_a(n).
\end{equation}

Set $R=Q^{D^{C_d}}$. Then either $H\leq RA$, or $g$ has an $R$-factorisation on $(X,X+H]$ through a subgroup $G_*$ of step at most $s-1$, written
\begin{equation}
 \label{eq:dilated-factorisation-output} g=\varepsilon g_*\gamma.
\end{equation}
The sequence $\gamma(\cdot)\Gamma$ has period at most $R$, and $G_*/(G_*\cap\Gamma)$, with the induced filtration, has degree at most $d$, dimension at most $D$, and complexity at most $R$ in an
induced rational basis.
\end{lemma}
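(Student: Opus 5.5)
The plan is to convert each local factorisation into horizontal character information for $g$ itself, and then invoke Lemma~\ref{lem:character-factorisation} once. Step drop will be forced through commutator vanishing: we pass through horizontal characters because characters, unlike subgroups, can be pooled across $a$ by pigeonholing.

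\emph{Step 1: local characters.} Since $G_a$ has step at most $s-1$ while $G$ has step $s$, $G_a$ is a proper subgroup, and it is $Q$-rational. For a polynomial sequence valued in $G_a$, the $s$-fold commutators of its Taylor coefficients vanish. Following the converse direction of Leng's factorisation, I will produce a nonzero horizontal character $\eta_a$ with $|\eta_a|\leq Q^{D^{O(1)}}$ annihilating $G_a$ (or a suitable intermediate subgroup containing $G_a[G,G]$). Composing with the factorisation, $\eta_a(g(a(r_a+q_an)))=\eta_a(\varepsilon_a(n))+\eta_a(\gamma_a(n))$. The first term is smooth on $[N_a]$. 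The second is rational with period at most $Q^{D^{O(1)}}$ by Lemma~\ref{lem:periodicrational}. Passing to a subprogression of $[N_a]$ with common difference equal to that period removes the rational part, giving
\[
\|\eta_a\circ g(a(r_a+q_aq'_a\cdot)+\text{shift})\|_{C^\infty}\leq Q^{D^{O(1)}}
\]
on a progression of length $\gg N_aQ^{-D^{O(1)}}$.

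\emph{Step 2: pigeonhole and concatenation.} The number of characters of height $\leq Q^{D^{O(1)}}$ is at most $Q^{D^{O(1)}}$, so one character $\eta$ serves a set $\mathcal A'\subset\mathcal A$ of size $\geq AQ^{-D^{O(1)}}$. Now $\phi=\eta\circ g$ is a classical polynomial of degree $\leq d$. For each $a\in\mathcal A'$, $\phi(a\cdot)$ is smooth on a progression of modulus $\leq Q^{O(1)}$ and length $\gg H/(AQ^{O(1)})$ inside $(X/a,(X+H)/a]$. I will then apply the standard concatenation for classical polynomial phases, as used in \cite{MSTT}: expand $\phi$ around $X$ in the binomial basis and compare the coefficients of $\phi(a(r+qn))$. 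This is a Vinogradov-type argument over many $a$, which says that if the $j$th coefficient $\alpha_j a^jq^j$ is nearly an integer on the scale $(H/A)^{-j}$ for many $a\sim A$, then $q^*\alpha_j$ is close to an integer on the scale $H^{-j}$ for some small $q^*$. Unless $H\leq RA$, this yields a positive integer $q^*\leq R$ with $\|q^*\eta\circ g\|_{C^\infty(X,X+H]}\leq R$. This step is where the alternative $H\leq RA$ arises.

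\emph{Step 3: iterate and factor.} Applying Lemma~\ref{lem:character-factorisation} with the character $q^*\eta$ factors $g$ through $\ker\eta$. We then iterate on the new sequence $g_*$, each time pulling the factorisations back along it. Each iteration lowers the dimension by at least one, so there are at most $D$ iterations, and the bounds accumulate to $Q^{D^{O(1)}}$. I intend to organise the iteration so that it stops once the current subgroup has step at most $s-1$.

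\emph{Main obstacle.} The hardest step is guaranteeing termination with a genuine step drop. The difficulty is that the subgroups $G_a$ differ with $a$, and the constructed kernels need not have smaller step. To handle this, I would try to choose $\eta_a$ as a character that vanishes on all iterated commutators relevant to $\xi$, mimicking the bracket conclusion~\eqref{eq:Leng-bracket-conclusion}. I would also keep only $a$ whose relevant subgroup data (finitely many rational subspaces of bounded height) coincide, pigeonholing these too at cost $Q^{D^{O(1)}}$.
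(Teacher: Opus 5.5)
Your Steps 1 and 2 (removing the rational part, pigeonholing, scalar concatenation) are fine in outline. The gap is in Step 3, which is where the content of the lemma lies; three things fail there.

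\begin{enumerate}
\item \emph{The bound.} Iterating on dimension cannot give $R=Q^{D^{C_d}}$. Each round passes through Lemma~\ref{lem:character-factorisation}, which raises the current parameter to a power $D^{O(1)}$, and the rounds compound. A single character cuts only one horizontal dimension, and $G_a[G,G]$ can have horizontal codimension $\asymp D$, so up to $\asymp D$ rounds may be needed. You end with $Q^{D^{O(D)}}$, and avoiding this exponential-in-$D$ loss is the point of descending in step rather than in dimension.
\item \emph{The pull-back.} After factoring $g=\varepsilon g_*\gamma$ through $\ker\eta$, the local factorisations $\varepsilon_a g_a\gamma_a$ do not yield factorisations of the $G_*$-valued sequence $g_*$ through lower-step subgroups of $G_*$. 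The pieces $\varepsilon^{-1}\varepsilon_a$, $g_a$ and $\gamma_a\gamma^{-1}$ need not lie in $G_*$.
\item \emph{Termination.} You never show that the process stops with a genuine step drop. The suggested fix via commutators ``relevant to $\xi$'' is unavailable, because this lemma contains no vertical character.
\end{enumerate}

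The missing observation makes the iteration unnecessary. If $G$ has step $s$, the $s$-fold commutator map $G^s\to G^{(s)}$ depends only on its arguments modulo $[G,G]$. Indeed, inserting a factor from $G^{(2)}$ pushes the commutator into $G^{(s+1)}=\{1\}$. Hence $K_a=G_a[G,G]$ already has step at most $s-1$. Since $G_a$ is $Q$-rational, $K_a$ is cut out by at most $D$ independent horizontal characters $\eta_{a,1},\ldots,\eta_{a,t_a}$ of height $Q^{D^{O(1)}}$. The paper takes such a list, vanishing on $G_a$ with common kernel of step at most $s-1$, from \cite[Lemma A.9]{Leng}.

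The rest then runs in one pass, as in the paper:
\begin{enumerate}
\item Each $\eta_{a,j}$ kills $g_a$. After multiplying by the least common multiple $\ell_a$ of the rationality exponents of $\gamma_a(0),\ldots,\gamma_a(d)$, the polynomial $\ell_a\eta_{a,j}\circ\gamma_a$ is integer-valued. Smoothness of $\varepsilon_a$ then gives a $C^\infty$ bound for $\ell_a\eta_{a,j}\circ g(a(r_a+q_a\cdot))$, with no passage to a subprogression.
\item Extrapolate this bound to all of $(X/a,(X+H)/a]$ via \cite[Lemma A.12]{Leng}.
\item Pigeonhole the \emph{entire list}; there are at most $(2Q'+2)^{D^2}=Q^{D^{O(1)}}$ possible lists.
\item Concatenate each character of the common list separately in the scalar setting.
\item Apply Lemma~\ref{lem:character-factorisation} once.
\end{enumerate}
Integer multiples do not change the common kernel, so the final subgroup is $K_a$, of step at most $s-1$, and all bounds stay of the form $Q^{D^{O(1)}}$. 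Your remarks about $G_a[G,G]$ and about pigeonholing ``subgroup data'' point in this direction, but the proposal as written does not use them.
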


\begin{proof}
We allow all implicit constants in this proof depend to depend on $d$ but on no other parameters. If $N_a<2d+2$ for some $a$, then from~\eqref{eq:dilated-factorisation-progression} we have
\[
 H\leq AQ N_a < (2d+2)AQ\leq RA
\]
after enlarging $C_d$. Hence we may assume $N_a\geq2d+2$ for every
$a\in\mathcal A$.

\emph{Recovering characters.} We first show that $G_a$ is contained in the common kernel of some horizontal characters. 
To this end, we apply \cite[Lemma A.9]{Leng} to the $Q$-rational subgroup $G_a$. This gives us linearly independent horizontal characters $\eta_{a,1},\ldots,\eta_{a,t_a}$ with $1\leq t_a\leq D$ such that
\begin{equation}
 \label{eq:dilated-recovered-kernel}
 |\eta_{a,j}|\leq Q_0\coloneqq Q^{D^{c_0}}, \qquad \eta_{a,j}(G_a)=0,
 \qquad K_a\coloneqq\bigcap_{j=1}^{t_a}\ker\eta_{a,j}
 \text{ has step at most }s-1,
\end{equation}
where $c_0=c_0(d)$ is sufficiently large. The list is nonempty since $G$ has step $s$.

For each $0\leq i\leq d$, we choose $1\leq m_{a,i}\leq Q$ with $\gamma_a(i)^{m_{a,i}}\in\Gamma$, and set
\[
 \ell_a=\operatorname{lcm}(m_{a,0},\ldots,m_{a,d}) \leq Q^{d+1}.
\]
For any horizontal character $\eta$ the real polynomial $v(n)=\eta(\gamma_a(n))$ has degree at most $d$, and
$\ell_av(i)\in\Z$ for $0\leq i\leq d$. Thus
\begin{equation}
 \label{eq:dilated-rational-clearing}
 \ell_av(n)=\sum_{j=0}^d\binom nj\,
\ell_a\partial^jv(0)\in\Z
 \qquad\text{for every }n\in\Z,
\end{equation}
since $\ell_a\partial^jv(0)$ is an integer linear combination of $\ell_av(0),\ldots,\ell_av(j)$.

We set $u_{a,j}=\eta_{a,j}\circ\varepsilon_a$, which is a real polynomial of degree at most $d$. A horizontal character $\eta$ of height $|\eta|$ is $D|\eta|$-Lipschitz for the Mal'cev metric. Hence the smoothness of $\varepsilon_a$ implies
\begin{align*}
 |\partial u_{a,j}(n)|\leq D|\eta_{a,j}|d_G(\varepsilon_a(n+1),\varepsilon_a(n))\leq DQ_0Q/N_a\qquad\text{for }0\leq n<N_a.
\end{align*}
Since $\partial u_{a,j}$ has degree at most $d-1$ and $N_a\geq2d+2$, interpolation bounds the coefficient of $n^r$ in $\partial u_{a,j}(n)$ by $\ll_d DQ_0Q N_a^{-r-1}$ for every $0\leq r\leq d-1$, so we have
\begin{align*}
 \max_{1\leq r\leq d}N_a^r\max_{n\in[N_a]}|\partial^ru_{a,j}(n)|\ll_d DQ_0Q.
\end{align*}

Now, applying $\ell_a\eta_{a,j}$ to \eqref{eq:dilated-factorisation-input}, the $g_a$ term vanishes by \eqref{eq:dilated-recovered-kernel} and the $\gamma_a$ term is integer-valued by \eqref{eq:dilated-rational-clearing}. Hence
\begin{equation}
 \label{eq:dilated-local-character-smoothness}
 \left|\ell_a\eta_{a,j}\right|\leq Q_1,\qquad \left\|\ell_a\eta_{a,j}\circ g(a(r_a+q_a\cdot))\right\|_{C^\infty([N_a])} \leq Q_1,\qquad Q_1=Q^{D^{c_1}},
\end{equation}
where $c_1=c_1(d)>c_0$ is large enough.

\emph{Affine extrapolation.}
Write $x_a=\lfloor X/a\rfloor$ and
\[
 (X/a,(X+H)/a]\cap\Z=x_a+[L_a]
\]
for some $L_a\geq 1$.
Since $B_a\subseteq x_a+[L_a]$ and $a>A$, we have
\begin{equation}
 \label{eq:dilated-affine-parameters}
 N_a\leq L_a\leq H/a+1\leq QN_a+1\leq2QN_a, \qquad |r_a-x_a|\leq L_a+q_a\leq3QN_a.
\end{equation}
Let
$P_{a,j}(n)=\ell_a\eta_{a,j}(g(a(x_a+n)))$. Then the polynomial $P_{a,j}(r_a-x_a+q_an)$ has $C^\infty([N_a])$ norm at most $Q_1$. By the multiparameter extrapolation result of \cite[Lemma A.12]{Leng} and~\eqref{eq:dilated-affine-parameters}, there is a nonzero integer $v_{a,j}$ such that
\begin{equation}
 \label{eq:dilated-affine-extrapolation}
 |v_{a,j}|\leq Q^{O_d(1)},\qquad \|v_{a,j}P_{a,j}\|_{C^\infty([N_a])} \leq Q_1Q^{O_d(1)}
\end{equation}
for some $c_2=c_2(d)>0$. Since $N_a\leq L_a\leq2QN_a$, \cite[(2.3)]{MRSTT} and \eqref{eq:dilated-affine-extrapolation} give
\[
\|v_{a,j}P_{a,j}\|_{C^\infty([L_a])}
\ll_d (L_a/N_a)^d Q_1Q^{O_d(1)}
\leq Q^{D^{c_2}}.
\]
This can be rewritten as
\begin{equation}
 \label{eq:dilated-full-local-smoothness}
 \begin{gathered}
  |\eta'_{a,j}|\leq Q_2,\qquad \|\eta'_{a,j}\circ g(a\cdot)\|_{C^\infty((X/a,(X+H)/a])}
  \leq Q_2,\\
 Q_2=Q^{D^{c_2}},\qquad \eta'_{a,j}=v_{a,j}\ell_a\eta_{a,j}.
 \end{gathered}
\end{equation}
Since multiplication by nonzero integers preserves linear independence and kernels, the common kernel of the list
$\mathcal L'_a=(\eta'_{a,1},\ldots,\eta'_{a,t_a})$ is $K_a$.

\emph{A common list and scalar concatenation.}
Each list $\mathcal{L}_{a}'$ has at most $D$ characters, with integer coordinates bounded by $Q_2$ in the fixed Mal'cev basis. There are at most $(2Q_2+2)^{D^2}$ such ordered lists. Pigeonholing produces a list
$\mathcal L$ and a subset $\mathcal A'\subseteq\mathcal A$ such that
\begin{equation}
 \label{eq:dilated-common-list}
 |\mathcal A'|\geq
 \frac{A}{Q(2Q_2+2)^{D^2}}
 \geq A/Q_3,\qquad Q_3=Q^{D^{c_3}},
\end{equation}
where $c_3=c_3(d)>c_2$ is large enough. Since the common kernel of $\mathcal L$ equals $K_a$ for every $a\in\mathcal A'$, it has step at most $s-1$.

Let $I=(X,X+H]\cap\Z$ and $N=|I|$, and set $Q_4=Q^{D^{c_4}}$, where $c_4=c_4(d)>c_3$ is sufficiently large.
If $A<1$, then $\mathcal A'=\{1\}$, so the interval in~\eqref{eq:dilated-full-local-smoothness} is $I$. Thus
\eqref{eq:dilated-full-local-smoothness} yields the required bound
for each $\eta\in\mathcal L$ with $v_\eta=1$.
Suppose now that $A\geq1$. If $N<2A$, then
$H<N+1<2A+1\leq Q_4A$, giving one of the desired alternatives. If $N\geq2A$, we apply the concatenation result of \cite[Corollary 2.4]{MSTT} to each degree $d$ scalar polynomial $\eta\circ g$ on $I$, with density
$\delta_2=1/Q_3$. Equations \eqref{eq:dilated-full-local-smoothness} and \eqref{eq:dilated-common-list} verify the hypotheses of that lemma. We conclude that either $H\leq Q_4A$, which is a desired alternative, or that for each $\eta\in\mathcal L$ there is a nonzero
integer $v_\eta$ with
\begin{equation}
 \label{eq:dilated-global-character-smoothness}
 |v_\eta\eta|\leq Q_4,\qquad
 \|v_\eta\eta\circ g\|_{C^\infty(I)}\leq Q_4.
\end{equation}
For $A<1$, the same bound holds with $v_\eta=1$. The characters
$v_\eta\eta$ remain independent, and since $\ker(q\eta)=\ker(\eta)$ for any nonzero integer $q$, from~\eqref{eq:dilated-recovered-kernel} we know that the common kernel $G_{*}$ is of step at most $s-1$.

We apply Lemma \ref{lem:character-factorisation} to $g$ on $I$ with $t=1$ and the independent characters in \eqref{eq:dilated-global-character-smoothness}, taking $Q_4$ as the character bound. It gives a factorisation through $G_*$, with all rationality, complexity and smoothness parameters bounded by
\[
 (MQ_4)^{D^{C(d,1)}}\leq Q^{D^{C_d}},
\]
after increasing $C_d$ in the statement.
\end{proof}

We now prove the Type I proposition, which is also used in the Type
$I_2$ argument.

\begin{proposition}[Efficient Type I sums in short intervals]
\label{prop:typeI-standalone}
Let $k\geq2$ be an integer. There is $C_0=C_0(k)\geq1$ with the following property. Let $X\geq2$, $2\leq H\leq X$, $0<\delta\leq1/10$, and let $D\geq2$ and $1\leq s\leq k-1$ be integers. Let $M\geq2$ and let $G/\Gamma$ be an $s$-step filtered nilmanifold of degree at most $k-1$, dimension at most $D$, and complexity at most $M$, with $G^{(s)}$ one-dimensional. Let $g\colon\Z\to G$ be polynomial and let $F\colon G/\Gamma\to\C$ be a $1$-bounded, $M$-Lipschitz vertical nilcharacter of nonzero frequency $\xi$ satisfying $|\xi|\leq M/\delta$. Suppose that $f$ is a $(\delta,A_I)$ Type I sum, where $A_I\geq1$,
and that
\begin{equation}\label{eq:maxsumlarge}
 \left|\sum_{X<n\leq X+H}f(n)F(g(n)\Gamma)\right|^*
 \geq\delta H.
\end{equation}
Set
\begin{equation}
 \label{eq:typeI-single-parameter}
 \mathcal R=\left(\frac{M\log X}{\delta}\right)^{D^{C_0}}.
\end{equation}
Then at least one of the following cases holds.
\begin{itemize}[leftmargin=2.3em]
 \item \emph{Type I range.} We have
 \begin{equation}
  \label{eq:typeI-scale}
  H\leq \mathcal R A_I.
 \end{equation}
 \item \emph{Step descent.} The sequence $g$ has an $\mathcal R$-factorisation on $(X,X+H]$ through a subgroup  $G_*\leq G$ of step at most $s-1$, written
 \begin{equation}\label{eq:factor}
  g=\varepsilon g_*\gamma.
 \end{equation}
 The sequence $\gamma(\cdot)\Gamma$ has period at most $\mathcal R$. The nilmanifold $G_*/(G_*\cap\Gamma)$, with the induced filtration $(G_{*,i})_{i\geq 0}=(G_*\cap G_i)_{i\geq 0}$, has degree at most $k-1$, dimension at most $D$, and complexity at most $\mathcal R$ in an induced rational basis.
\end{itemize}
\end{proposition}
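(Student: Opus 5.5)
We would follow the standard Type I reduction of \cite[Section~5]{MSTT}, keeping track of dimension, and then feed the output into Leng's theorem and Lemma~\ref{lem:concatenate-dilated-factorisations}. First, the maximal sum in \eqref{eq:maxsumlarge} is attained on some subprogression $P'\subseteq(X,X+H]$; we may replace $f$ by $f\1_{P'}$ at essentially no cost, or carry $P'$ through the argument. Writing $f=\alpha*\beta$, we expand
\[
\sum_{n\in P'}f(n)F(g(n)\Gamma)=\sum_{a\leq A_I}\alpha(a)\sum_{b:\,ab\in P'}\beta(b)F(g(ab)\Gamma).
\]
We split $a$ dyadically into ranges $(A,2A]$, losing a factor $\log X$. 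Cauchy--Schwarz with \eqref{eq:type-alpha-second-moment} then shows that for some $A$ there is a set $\mathcal A\subset(A,2A]$ with $|\mathcal A|\gg A\delta^{O(1)}/\log X$. For each $a\in\mathcal A$ the inner sum over $b$ in an interval of length about $H/a$ is $\gg\delta^{O(1)}H/(a\log X)$. Here we may assume $H/A$ is large, since otherwise $H\leq\mathcal R A_I$.

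Next we remove $\beta$. We split $b$ into residue classes mod $q$ and use summation by parts with \eqref{eq:type-I-TV}. This shows that for each $a\in\mathcal A$ there is a progression
\[
B_a=r_a+q[N_a]\subset(X/a,(X+H)/a],\qquad N_a\gg\delta^{O(1)}H/(A\log X),
\]
with $|\E_{n\in[N_a]}F(g(a(r_a+qn))\Gamma)|\gg(\delta/\log X)^{O(1)}$. The polynomial sequence $n\mapsto g(a(r_a+qn))$ is a polynomial for the same filtration, and $F$ is still a vertical nilcharacter of frequency $\xi$ with $|\xi|\leq M/\delta$. We therefore apply Theorem~\ref{thm:Leng-equidistribution} with $t=1$, density $(\delta/\log X)^{O(1)}$ and parameter $Q=\mathcal R_0=(M\log X/\delta)^{D^{C}}$. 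There are two outcomes. In the first, $N_a\leq\mathcal R_0$, which forces $H\leq\mathcal R A\leq\mathcal R A_I$. In the second, we obtain a $\mathcal R_0$-factorisation of $g(a(r_a+q\cdot))$ on $[N_a]$ through a subgroup $G_a$ of step at most $s-1$. This step bound holds because $G^{(s)}$ is one-dimensional.

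Finally, we apply Lemma~\ref{lem:concatenate-dilated-factorisations} with $Q=\mathcal R_0$, noting $q_a=q\leq\delta^{-1}\leq Q$ and $N_a\geq H/(AQ)$. It gives either $H\leq\mathcal R_0^{D^{C_d}}A\leq\mathcal R A_I$, or the desired $\mathcal R$-factorisation \eqref{eq:factor} through $G_*$ of step at most $s-1$, with the period and complexity bounds, after enlarging $C_0$.

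The main obstacle is the bookkeeping in the first two steps. The losses must be polynomial in $\delta/\log X$ only, with no dimension dependence, so that the final bound keeps the shape \eqref{eq:typeI-single-parameter}. We also need $N_a$ to stay comparable to $H/A$ uniformly in $a$, as Lemma~\ref{lem:concatenate-dilated-factorisations} requires. The case $A<1/2$, i.e.\ $a=1$ only, should also be checked separately; we expect it to be harmless.
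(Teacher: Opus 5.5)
Your proposal is correct and follows essentially the same route as the paper: a dyadic split in $a$ with Cauchy--Schwarz against the second moment of $\alpha$, partial summation against the total variation of $\beta$ to pass to a maximal unweighted subprogression $B_a$, Leng's theorem on each $n\mapsto g(a(r_a+q_an))$ (with the step drop coming from $G^{(s)}$ being one-dimensional), and then Lemma~\ref{lem:concatenate-dilated-factorisations}. One small correction: the progression $P'$ attaining the maximal sum may itself have a nontrivial common difference, so $B_a$ need not have common difference $q$. As in the paper, its common difference $q_a$ is nevertheless $\ll(\log X/\delta)^{O(1)}$, because $B_a\subset(X/a,(X+H)/a]$ has $\gg(\delta/\log X)^{O(1)}H/A$ elements.
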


\begin{proof}
We set
\begin{equation*}
 f=\alpha*\beta,\qquad Z=M(\log X)/\delta.
\end{equation*}
All constants introduced in this proof depend only on $k$.

\emph{Dyadic reduction.}
Let $P_0\subset(X,X+H]$ attain the maximal sum in~\eqref{eq:maxsumlarge}, and define
\begin{equation}
 \label{eq:typeI-row-sum}
 S_a=\sum_{\substack{b\geq1\\ab\in P_0}}
 \beta(b)F(g(ab)\Gamma).
\end{equation}
We partition the positive integers into dyadic blocks $(A,2A]$, where $A=2^j$ and $j\geq-1$.  There are $O(\log X)$ nonempty blocks meeting the support of $\alpha$ in this sum.  At least one block, with
$A\leq A_I$, satisfies
\begin{equation*}
 \sum_{A<a\leq2A}|\alpha(a)S_a|
 \gg\delta H/\log X.
\end{equation*}
Cauchy--Schwarz and \eqref{eq:type-alpha-second-moment} give
\begin{equation}
 \label{eq:typeI-row-second-moment}
 \sum_{A<a\leq2A}|S_a|^2
 \gg\frac{\delta^3H^2}{A\log^2 X}.
\end{equation}
If $H<2A$, then $H\leq2A_I$ and the length conclusion follows. Otherwise, $|S_a|\leq2\delta^{-1}H/A$.  Comparing this upper bound with \eqref{eq:typeI-row-second-moment}, we obtain, for a sufficiently large constant $C_1$, a set $\mathcal A\subset(A,2A]$ such that
\begin{equation*}
 |\mathcal A|\geq Z^{-C_1}A,\qquad
 |S_a|\geq Z^{-C_1}H/A\quad\text{for all }a\in\mathcal A.
\end{equation*}

\emph{Removing the bounded variation factor.}
Let $q_\beta\leq\delta^{-1}$ satisfy \eqref{eq:type-I-TV}. For each fixed $a\in\mathcal A$, consider the progression
\begin{equation*}
 P_a\coloneqq\{b\in\N\colon ab\in P_0\}.
\end{equation*}
By \cite[Lemma 2.2(iii)]{MSTT} and monotonicity of total variation, we have
\begin{equation*}
 \begin{aligned}
 |S_a|
 &\leq\|\beta\|_{\TV(P_a;q_\beta)}
       \left|\sum_{b\in P_a}F(g(ab)\Gamma)\right|^*\\
 &\leq\delta^{-1}
       \left|\sum_{b\in P_a}F(g(ab)\Gamma)\right|^*.
 \end{aligned}
\end{equation*}
We choose a subprogression $B_a\subseteq P_a$ attaining the maximal sum, so that
\begin{equation}
 \label{eq:typeI-unweighted-prefix}
 \left|\sum_{b\in B_a}F(g(ab)\Gamma)\right|
 \geq\delta|S_a|\geq\delta Z^{-C_1}H/A.
\end{equation}
In particular, $|B_a|\geq\delta Z^{-C_1}H/A$, since  $F$ is $1$-bounded. We now conclude that
\begin{equation}
 \label{eq:typeI-dilate-correlation}
 |B_a|\geq Z^{-C_2'}H/A,\qquad
 \left|\sum_{b\in B_a}F(g(ab)\Gamma)\right|\geq Z^{-C_2'}|B_a|.
\end{equation}
If $|B_a|=1$ for some $a$, this already implies $H\leq Z^{C_2'}A_I$. We may therefore write
\begin{equation*}
 B_a=\{r_a+q_an\colon 1\leq n\leq|B_a|\},\qquad |B_a|\geq2,
\end{equation*}
where $q_a$ is the common difference of $B_a$. From the containment in $(X/a,(X+H)/a]$ and \eqref{eq:typeI-dilate-correlation} we have $q_a\leq2Z^{C_2'}$.

\emph{Local factorisations and concatenation.}
For each $a\in\mathcal A$, we apply  Theorem \ref{thm:Leng-equidistribution} to $n\mapsto g(a(r_a+q_an))$ on $[|B_a|]$ with $\delta_1=Z^{-C_2'}/2$. The hypothesis on $|\xi|$ holds since $\delta_1\leq\delta$, and \eqref{eq:typeI-dilate-correlation} gives the required correlation~\eqref{eq:Lengdelta}. We choose $Q=Z^{D^{C_3}}$ with $C_3=C_3(k)$ sufficiently large for the theorem's output and for
\begin{equation}
 \label{eq:typeI-local-factorisation-parameters}
 Q\geq M,\qquad
 Q\geq2Z^{C_2'},\qquad
 Q\geq Z^{C_1}.
\end{equation}
If conclusion (i) of the theorem holds for any $a\in\mathcal A$, then
\[
 H/A\leq Z^{C_2'}|B_a|\leq Z^{C_2'}Q,
\]
so $H\leq Z^{C_2'}QA_I$ and the Type I range follows after increasing $C_0$.

Otherwise, for every $a\in\mathcal A$, the same theorem provides  a $Q$-factorisation on $[|B_a|]$ through a subgroup $G_a$, with $G_{a}$ having step at most $s-1$ by the assumption that $G^{(s)}$ is one-dimensional. We write this conclusion as
\begin{equation}
 \label{eq:typeI-local-factorisations}
 g(a(r_a+q_an))
 =\varepsilon_a(n)g_a(n)\gamma_a(n).
\end{equation}
 Moreover,
\eqref{eq:typeI-dilate-correlation}, \eqref{eq:typeI-local-factorisation-parameters} and the bound for $q_a$ above give
\[
 |\mathcal A|\geq A/Q,\qquad
 |B_a|\geq H/(AQ),\qquad 1\leq q_a\leq Q.
\]
Thus the hypotheses of Lemma \ref{lem:concatenate-dilated-factorisations} hold with $d=k-1$. That lemma gives either
\[
 H\leq Q^{D^{C_{k-1}}}A\leq\mathcal RA_I
\]
or the desired global lower step factorisation with all bounds at most $\mathcal R$, after taking $C_0$ sufficiently large in \eqref{eq:typeI-single-parameter}.
\end{proof}

\subsection{The Type \texorpdfstring{$I_2$}{I2} case}

\begin{lemma}[Concatenation along quadratic paths]
\label{lem:concatenate-quadratic-path-factorisations} Fix integers $d\geq1$ and $1\leq s\leq d$. There is $C=C(d,s)\geq1$ with the following property.

Let $J=(x,x+L]\cap\Z$, where $L\geq2$, and let $G/\Gamma$ be an $s$-step filtered nilmanifold of degree at most $d$, dimension at most $D$, and complexity at most $W$, where $D\geq2$ and $W\geq16$. Let $h\colon\Z\to G$ be polynomial. Suppose that
\[
 K>W^{D^C},\qquad B>W^{D^C},\qquad Q>W^{D^C},
 \qquad
 L\leq16K^2BQ.
\]

Suppose that at least $BQ/W^2$ pairs of integers $(b,q)$ with $0\leq b\leq WB$ and $1\leq q\leq Q$ have the following property. For each such pair, choose integers $m_{b,q},n_{b,q}$ and set
$$z_{b,q}(t)=(m_{b,q}+tq)(n_{b,q}-tb).$$ Suppose that there is an arithmetic progression $P_{b,q}\subseteq[K]$ of at least $K/W$ terms such that
\[
z_{b,q}(t)\in J\qquad \textnormal{for all}\,\, t\in P_{b,q},
\]
and $h\circ z_{b,q}$, after affine parametrisation of $P_{b,q}$, has a $W$-factorisation through a subgroup of step at most $s-1$, with respect to the doubled filtration
\[
 G^{[2]}_r=G_{\lceil r/2\rceil}.
\]

Then $h$ has a $W^{D^C}$-factorisation on $J$ through a subgroup $G_*\leq G$ of step at most $s-1$, written
\[
 h=\varepsilon h_*\gamma\qquad\text{on }J.
\]
Moreover, $G_*/(G_*\cap\Gamma)$ has complexity at most $W^{D^C}$ in an induced rational basis, and $\gamma(\cdot)\Gamma$ has period at most $W^{D^C}$.
\end{lemma}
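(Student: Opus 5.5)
We follow the template of Lemma~\ref{lem:concatenate-dilated-factorisations}: convert each local factorisation into a list of horizontal characters, make the list common to many pairs, concatenate the resulting scalar polynomials, and then rebuild a global factorisation.

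\emph{Step 1: local characters.} Fix a good pair $(b,q)$ with local factorisation $h(z_{b,q}(\cdot))=\varepsilon_{b,q}h_{b,q}\gamma_{b,q}$ through a subgroup $G_{b,q}$ of step at most $s-1$. Apply \cite[Lemma A.9]{Leng} to $G_{b,q}$. This gives independent horizontal characters $\eta_{b,q,j}$ of height $W^{D^{O(1)}}$ that annihilate $G_{b,q}$ and whose common kernel has step at most $s-1$. As before, multiply by the lcm $\ell_{b,q}\le W^{O(d)}$ of the rationality exponents of $\gamma_{b,q}(0),\dots,\gamma_{b,q}(2d)$; the relevant degree is $2d$ for the doubled filtration. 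We then obtain
\[
\|\ell_{b,q}\eta_{b,q,j}\circ h\circ z_{b,q}\|_{C^\infty(P_{b,q})}\le W^{D^{O(1)}}.
\]
Here $\eta\circ h\circ z_{b,q}$ is a real polynomial of degree at most $2d$ in $t$. Extrapolate from $P_{b,q}$ to all of $[K]$ using \cite[Lemma A.12]{Leng}, as in the affine extrapolation step, at the cost of a further integer multiplier $v_{b,q,j}$. There are at most $W^{D^{O(1)}}$ possible lists, so pigeonholing gives a common list $\mathcal L$ that works for at least $BQ/W^{D^{O(1)}}$ pairs $(b,q)$. The common kernel $G_*$ of $\mathcal L$ has step at most $s-1$.

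\emph{Step 2: scalar concatenation along quadratic paths.} Fix $\eta\in\mathcal L$ and set $P=\eta\circ h$, a real polynomial of degree at most $d$ on $J$. We know that $t\mapsto P((m+tq)(n-tb))$ is smooth on $[K]$ at scale $K$ for many pairs $(b,q)$. The goal is to deduce that, for some integer $v$ with $|v|\le W^{D^{O(1)}}$, we have $\|vP\|_{C^\infty(J)}\le W^{D^{O(1)}}$. This is the scalar quadratic-path concatenation used for the Type $I_2$ sums in \cite[Section 8]{MSTT}, and I would invoke or adapt the corresponding lemma there; its quantitative bounds are polynomial in $W$ and independent of $D$.

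If it needs to be reproved, I would proceed as follows. Expand $z_{b,q}(t)=mn+t(qn-bm)-t^2bq$. The coefficient of $t^{2d}$ in $P(z_{b,q}(t))$ is $\alpha_d(-bq)^d$, where $\alpha_d$ is the top coefficient of $P$. Smoothness gives $\|\alpha_d(bq)^d\|_{\R/\Z}\ll K^{-2d}W^{O(1)}$ for many pairs $(b,q)$. A Vinogradov-type lemma on the many products $(bq)^d$ then yields an integer $v\le W^{O(1)}$ with $\|v\alpha_d\|\ll W^{O(1)}(K^2BQ)^{-d}$. The hypothesis $L\le 16K^2BQ$ is exactly what makes this the correct scale for $J$. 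Finally, descend through the lower coefficients by downward induction. This descent is the step I expect to be the main obstacle, because the intermediate linear term $t(qn-bm)$ mixes coefficients of different degrees and the base points $m,n$ vary with the pair.

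\emph{Step 3: global factorisation.} The characters $v_\eta\eta$ for $\eta\in\mathcal L$ remain independent and still have common kernel $G_*$. Apply Lemma~\ref{lem:character-factorisation} with $t=1$ on $J$ to obtain $h=\varepsilon h_*\gamma$ on $J$. That lemma also gives the complexity bounds for $G_*/(G_*\cap\Gamma)$ and the period bound for $\gamma(\cdot)\Gamma$, all of the form $W^{D^C}$.

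The lower bounds on $K$, $B$ and $Q$ serve two purposes. They exclude degenerate cases such as short progressions and very few pairs. They also ensure that the loss factors $W^{D^{O(1)}}$ are absorbed into the final bound $W^{D^C}$.
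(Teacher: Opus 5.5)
\textbf{Verdict: genuine gap.} Your route is the paper's:
\begin{enumerate}
\item recover characters annihilating the local subgroups via \cite[Lemma A.9]{Leng};
\item clear the rational and smooth factors and pigeonhole a common list $\mathcal L$;
\item for each $\eta\in\mathcal L$, run a scalar concatenation by isolating top coefficients and applying the two-variable Vinogradov lemma \cite[Lemma A.11]{Leng};
\item finish with Lemma~\ref{lem:character-factorisation}.
\end{enumerate}
However, you never carry out the scalar step. You treat only $\alpha_d$, flag the descent to lower coefficients as the ``main obstacle'', and otherwise defer to an unspecified result in \cite{MSTT}. That descent is the substance of the lemma, so this is a gap.

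The missing idea is as follows. Expand $\eta\circ h(z)=\sum_{\ell\le d}\alpha_\ell(z-y_0)^\ell$ about $y_0=\min J-1$. The lower coefficients depend on the base point, and this choice gives the right normalisation for $C^\infty(J)$. Assume inductively that $r\le T$ and $\|r\alpha_\ell\|_{\R/\Z}\le TL^{-\ell}$ for all $\ell>j$, and pick integers $m_\ell$ with $|r\alpha_\ell-m_\ell|\le TL^{-\ell}$.
\begin{enumerate}
\item Since $z_{b,q}(t)-y_0\in\Z$, the part $\sum_{\ell>j}m_\ell(z_{b,q}(t)-y_0)^\ell$ is integer-valued and invisible modulo $1$.
\item The remainder $\sum_{\ell>j}(r\alpha_\ell-m_\ell)(z_{b,q}(t)-y_0)^\ell$ is $O_d(T)$ in absolute value on $P_{b,q}$, \emph{because} $z_{b,q}(t)\in J$ there. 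Interpolation on $N_{b,q}\ge 2d+1$ points then bounds its $C^\infty$ norm by $O_d(T)$.
\item Hence the truncation $F_j(z)=r\sum_{\ell\le j}\alpha_\ell(z-y_0)^\ell$, composed with the path, has $C^\infty$ norm $O_d(TR)$, using $r\le T$.
\item $F_j$ has degree $j$ in $z$, and $z$ is quadratic in the path parameter with leading coefficient $-bq$ (times $a^2$ after affine reparametrisation of $P_{b,q}$, $a$ its common difference). So the $2j$-th difference of $F_j\circ z$ is the constant $(2j)!\,r\alpha_j(-bqa^2)^j$. The linear term $t(qn_{b,q}-bm_{b,q})$ and the varying base points affect only lower powers and never enter.
\item This gives $\|(2j)!a^{2j}r\alpha_j b^jq^j\|_{\R/\Z}\ll TR(W/K)^{2j}$. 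The two-variable Vinogradov lemma together with $L\le 16K^2BQ$ then yields $r'\le (WRT)^{O_d(1)}r$ with $\|r'\alpha_j\|_{\R/\Z}\le (WRT)^{O_d(1)}L^{-j}$.
\end{enumerate}
This closes the induction, and the total loss after $d$ steps is still $W^{D^{O(1)}}$.

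The hypothesis $z_{b,q}(t)\in J$ for $t\in P_{b,q}$, which your sketch never uses, is exactly what controls the higher-degree remainder.

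Your extrapolation from $P_{b,q}$ to all of $[K]$ is harmless but unnecessary. The paper instead pigeonholes the common difference $a_{b,q}\le 2W$ and stays on the progression, where membership in $J$ is available directly. If you do extrapolate, you must separately check that $|z_{b,q}(t)-y_0|\ll W^{O(1)}L$ for all $t\in[K]$ before running the descent. This follows by interpolating the quadratic $z_{b,q}$ from its values on $P_{b,q}$.
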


\begin{proof}
If $G$ has step at most $s-1$, take $G_*=G$, $h_*=h$, and $\varepsilon=\gamma=1_G$. Thus we may assume that $G$ has step $s$.

For each pair $(b,q)$ in the hypothesis, write $P_{b,q}=t_{b,q}+a_{b,q}[N_{b,q}]$ with $a_{b,q}\geq1$. Since $P_{b,q}\subseteq[K]$ and $N_{b,q}\geq K/W$, we have
\[
 N_{b,q}\geq K/W\geq2d+1,
 \qquad a_{b,q}(N_{b,q}-1)\leq K-1,
 \qquad a_{b,q}\leq2W.
\]
We abbreviate $t=t_{b,q}$, $a=a_{b,q}$, and $N=N_{b,q}$. The local factorisation gives, for $n\in[N]$,
\[
 h(z_{b,q}(t+an))
 =\varepsilon_{b,q}(n)h_{b,q}(n)\gamma_{b,q}(n),
\]
where $h_{b,q}$ takes values in a connected $W$-rational subgroup $H_{b,q}$ of step at most $s-1$. By \cite[Lemma A.9]{Leng}, there are linearly independent horizontal characters $\eta_1,\ldots,\eta_r$, $1\leq r\leq D$, which annihilate $H_{b,q}$, have heights at most $W^{D^c}$ for a constant $c=c(d,s)$, and whose common kernel has step at most $s-1$.

For $1\leq n\leq2d+1$, choose $1\leq r_n\leq W$ such that
$\gamma_{b,q}(n)^{r_n}\in\Gamma$, and set $r_0=\prod_{n=1}^{2d+1}r_n\leq W^{2d+1}$. Then the polynomial
$r_0\eta_i(\gamma_{b,q}(n))$ is an integer at these $2d+1$ points, for every $i$. Since this polynomial has degree at most $2d$ and is integer-valued at $2d+1$ consecutive integers, all of its finite differences at the first of these integers are integers. Therefore, the binomial expansion of this polynomial shows that it is integer-valued on $\mathbb Z$.

Moreover,
$u_i(n)=\eta_i(\varepsilon_{b,q}(n))$ is a real polynomial of degree
at most $2d$, and smoothness of $\varepsilon_{b,q}$ and the $D|\eta_i|$-Lipschitz property of $\eta_i$ give
\[
 |u_i(n)-u_i(n-1)|\leq D|\eta_i|W/N
 \qquad \textnormal{for}\,\,1\leq n\leq N.
\]
As in the proof of Lemma~\ref{lem:concatenate-dilated-factorisations}, interpolating this difference polynomial at spaced points of $[N]$ yields
\[
 \|r_0u_i\|_{C^\infty([N])}\ll_d r_0D|\eta_i|W.
\]
As $\eta_i(H_{b,q})=0$, the characters $r_0\eta_i$ satisfy
\begin{equation}
 \label{eq:I2-new-local-character}
 |r_0\eta_i|\leq W^{D^{c_0}},\qquad
 \|r_0\eta_i\circ h(z_{b,q}(t+a\cdot))\|_{C^\infty([N])}
 \leq W^{D^{c_0}}
\end{equation}
for some $c_0=c_0(d,s)$.

There are at most
$(2W^{D^{c_0}}+1)^{D^2}$ choices of these lists of multiplied characters, and there are at most $2W$ possible coefficients $a_{b,q}$. Pigeonholing these lists and coefficients gives a fixed $a\leq2W$, a nonempty independent list $\mathcal L$, and a set
\[
 \mathcal S\subseteq([0,WB]\cap\Z)\times([1,Q]\cap\Z)
\]
such that
\begin{equation}
 \label{eq:I2-new-common-data}
 \begin{gathered}
 |\mathcal S|\geq BQ/R,\\
 |\eta|+
 \|\eta\circ h(z_{b,q}(t_{b,q}+a\cdot))
   \|_{C^\infty([N_{b,q}])}\leq R
 \end{gathered}
\end{equation}
for every $\eta\in\mathcal L$ and $(b,q)\in\mathcal S$,
where $R=W^{D^{c_1}}$ for a constant $c_1=c_1(d,s)$. Moreover, the common kernel of $\mathcal L$ has step at most
$s-1$.

Write $J=y_0+[N_0]$, where $y_0=\min J-1$ and $N_0=|J|\asymp L$.
Fix $\eta\in\mathcal L$ and write
\[
 P_\eta(z)=\eta(h(z))
 =\sum_{\ell=0}^d\alpha_\ell(z-y_0)^\ell.
\]
We recover the nonconstant coefficients of this polynomial by descending through
$j=d,\ldots,1$. At the beginning of the step for $j$, let $r\geq1$
and $T\geq R$ satisfy
\begin{equation}
 \label{eq:I2-new-induction}
 r\leq T,\qquad
 \|r\alpha_\ell\|_{\R/\Z}\leq TL^{-\ell}
 \quad \textnormal{for all}\,\, j<\ell\leq d.
\end{equation}
For $j=d$ take $r=1$ and $T=R$. Set $F_j(z)=r\sum_{\ell\leq j}\alpha_\ell(z-y_0)^\ell$. For each \(\ell>j\), choose an integer \(m_\ell\) such that
\[
 \left|r\alpha_\ell-m_\ell\right|
 \leq T L^{-\ell}.
\]
Since \(z_{b,q}(t_{b,q}+an)-y_0\in\Z\) for every \(n\in\Z\), the polynomial
\[
 \sum_{\ell=j+1}^d
 m_\ell\bigl(z_{b,q}(t_{b,q}+an)-y_0\bigr)^\ell
\]
is integer-valued and therefore does not affect the $C^\infty$-norm. The remaining error is
\begin{align*}
 E(n)=\sum_{\ell=j+1}^d(r\alpha_\ell-m_\ell)\bigl(z_{b,q}(t_{b,q}+an)-y_0\bigr)^\ell.
\end{align*}
For $n\in[N_{b,q}]$, we have $z_{b,q}(t_{b,q}+an)\in J$, so $|E(n)|\ll_d T$ by \eqref{eq:I2-new-induction}. The polynomial $E$ has degree at most $2d$. Since $N_{b,q}\geq2d+1$, interpolation yields
\begin{align*}
 \max_{0\leq v\leq2d}N_{b,q}^v\max_{n\in[N_{b,q}]}|\partial^vE(n)|\ll_d T.
\end{align*}
We combine this estimate with \eqref{eq:I2-new-common-data} and $r\leq T$ to obtain
\begin{align*}
 \|F_j(z_{b,q}(t_{b,q}+a\cdot))\|_{C^\infty([N_{b,q}])}\ll_d TR.
\end{align*}

We have
\[
 z_{b,q}(t_{b,q}+an)
 =c_0+c_1n-bqa^2n^2
\]
for suitable real numbers \(c_0,c_1\). Since the coefficient of $z^j$ in $F_j(z)$ is $r\alpha_j$,
the coefficient of \(n^{2j}\) in
\(F_j(z_{b,q}(t_{b,q}+an))\) is
\[
 r\alpha_j(-bqa^2)^j.
\]
Therefore its \(2j\)-th finite difference is the constant
\[
 (2j)!r\alpha_j(-bqa^2)^j
 \qquad\pmod{1}.
\]
By the definition of the \(C^\infty([N_{b,q}])\)-norm,
\[
 N_{b,q}^{2j}
 \bigl\|(2j)!a^{2j}r\alpha_jb^jq^j\bigr\|_{\R/\Z}
 \ll_d TR.
\]
Since \(N_{b,q}\geq K/W\), we conclude that
\begin{equation}
 \label{eq:I2-new-path-coefficient}
 \bigl\|(2j)!a^{2j}r\alpha_jb^jq^j\bigr\|_{\R/\Z}
 \ll_d TR(W/K)^{2j}
 \leq (WRT)^{O_d(1)}K^{-2j}.
\end{equation}

Set $B_0=\lfloor WB\rfloor+1$ and $Q_0=\lfloor Q\rfloor$. The pairs $(b+1,q)$ with $(b,q)\in\mathcal S$ have density at least $\delta_0=(2WR)^{-1}$ in $[B_0]\times[Q_0]$. We apply the two-variable Vinogradov lemma \cite[Lemma A.11]{Leng} to
\begin{align*}
 (u,v)\mapsto (2j)!a^{2j}r\alpha_j(u-1)^jv^j,
\end{align*}
using \eqref{eq:I2-new-path-coefficient}. By \eqref{eq:I2-new-path-coefficient}, for each such pair,
\begin{align*}
 \left\|(2j)!a^{2j}r\alpha_j b^jq^j\right\|_{\R/\Z}
 \leq (WRT)^{O_d(1)}K^{-2j}.
\end{align*}
After increasing $C=C(d,s)$, the right-hand side is less than $\delta_0/10$. We may therefore apply
\cite[Lemma~A.11]{Leng} to
\begin{align*}
 (u,v)\mapsto(2j)!a^{2j}r\alpha_j(u-1)^jv^j.
\end{align*} The lemma gives an integer $w\geq1$ with $w\leq(WRT)^{O_d(1)}$, and the coefficient of $\binom uj\binom vj$ in the resulting polynomial is
\begin{align*}
 w(2j)!(j!)^2a^{2j}r\alpha_j.
\end{align*}
The coefficient bound in the lemma therefore gives
\begin{align*}
 \|w(2j)!(j!)^2a^{2j}r\alpha_j\|_{\R/\Z}
 \leq(WRT)^{O_d(1)}K^{-2j}B^{-j}Q^{-j}.
\end{align*}
With $r'=w(2j)!(j!)^2a^{2j}r$, we obtain
\begin{equation}
 \label{eq:I2-new-q-step}
r'/r\leq(WRT)^{O_d(1)},\qquad\|r'\alpha_j\|_{\R/\Z}\leq(WRT)^{O_d(1)}(K^2BQ)^{-j}\leq(WRT)^{O_d(1)}L^{-j},
\end{equation}
where the last inequality uses $L\leq16K^2BQ$. Choosing $T'=(WRT)^{c_d}$ with $c_d$ sufficiently large, we may
replace $(r,T)$ by $(r',T')$ in \eqref{eq:I2-new-induction} at degree $j-1$. We use the original set $\mathcal S$ at each step. After at most $d$ steps, we obtain an integer $r_\eta\geq1$ and a bound
$T_0\leq W^{D^{c_2}}$, where $c_2=c_2(d,s)$, such that
\begin{align*}
 r_\eta\leq T_0,\qquad
 \|r_\eta\alpha_\ell\|_{\R/\Z}
 \leq T_0L^{-\ell}\quad \textnormal{for all}\,\,1\leq\ell\leq d,
\end{align*}
with $T_0$ independent of $\eta\in\mathcal L$. The assumed lower bounds for $K,B,Q$, with $C=C(d,s)$ sufficiently large, justify the application of the two-variable Vinogradov lemma at every step.

We then replace each $\eta\in\mathcal L$ by $r_\eta\eta$. As $N_0\asymp L$, the coefficient estimates and integer approximations give
\begin{align*}
 |\eta|+\|\eta\circ h\|_{C^\infty(J)}
 \leq R_0,\qquad R_0=W^{D^{c_3}},
\end{align*}
for a constant $c_3=c_3(d,s)$. The list remains independent after multiplication and
its common kernel is unchanged. Denoting
$G_*=\bigcap_{\eta\in\mathcal L}\ker\eta$, we apply Lemma \ref{lem:character-factorisation} to $h$ on $J$, with complexity bound $W$ and character and smoothness bound $R_0$. The step bound for $G_*$ follows from the construction of $\mathcal L$ and the fact that multiplying its characters by nonzero integers does not change their common kernel.
We obtain $h=\varepsilon h_*\gamma$ through $G_*$, with all rationality, complexity and smoothness parameters and period at most $(WR_0)^{D^{O_d(1)}}$.
Increasing $C(d,s)$ absorbs all these bounds
into $W^{D^C}$.
\end{proof}

We are now ready to prove the efficient Type $I_2$ result. 

\begin{proposition}[Efficient Type $I_2$ sums in short intervals]
\label{prop:typeI2-standalone}
Let $k\geq2$ be an integer and let $1\leq s\leq k-1$. There is $C_*=C_*(k)\geq1$ with the following property.  Let
$X,H$ satisfy $X\geq2$ and $2\leq H\leq X$, and let $0<\delta\leq1/10$.  Let $D\geq2$ be an
integer, let $M\geq2$, and let $G/\Gamma$ be an $s$-step filtered nilmanifold of degree at most $k-1$, dimension
at most $D$, and complexity at most $M$, with $G^{(s)}$ one-dimensional.  Let $g\colon \Z\to G$ be polynomial, and let $F\colon G/\Gamma\to\C$ be a $1$-bounded $M$-Lipschitz vertical
nilcharacter with nonzero frequency $\xi$ satisfying $|\xi|\leq M/\delta$.  Suppose that $f$ is a
$(\delta,A_{I_2})$ Type $I_2$ sum with $A_{I_2}\geq1$, and that
\begin{equation}
 \label{eq:type-I-I2-correlation}
 \left|\sum_{X<n\leq X+H}f(n)F(g(n)\Gamma)\right|^*
 \geq\delta H.
\end{equation}
Define $\mathcal R$ by
\begin{equation}
 \label{eq:type-I-I2-envelope}
 \mathcal R=\left(\frac{M\log X}{\delta}\right)^{D^{C_*}}.
\end{equation}
Then at least one of the following cases holds.
\begin{itemize}[leftmargin=2.3em]
 \item \textit{Type $I_2$ range.} We have
 \begin{equation}
  \label{eq:typeI2-scale}
  H\leq\mathcal R X^{1/3}A_{I_2}^{2/3}.
 \end{equation}

 \item \textit{Step descent.} The sequence $g$ has an $\mathcal R$-factorisation on $(X,X+H]$ through a subgroup $G_*\leq G$ of step at most $s-1$, written
 \begin{equation}
  \label{eq:type-I-I2-factorization}
  g=\varepsilon g_*\gamma.
 \end{equation}
 The nilmanifold $G_*/(G_*\cap\Gamma)$, with induced filtration $G_{*,i}=G_*\cap G_i$, has degree at most $k-1$, dimension at most $D$, step at most $s-1$, and complexity at most $\mathcal R$ in an induced rational basis. The sequence $\gamma(\cdot)\Gamma$ has period at most $\mathcal R$.
\end{itemize}
\end{proposition}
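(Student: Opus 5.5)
We follow the outline: a hyperbola decomposition as in \cite[Section~8]{MSTT}, then Leng's theorem along quadratic paths, then two concatenations. Write $f=\alpha*\beta_1*\beta_2$ and $Z=M(\log X)/\delta$. Let $P_0\subset(X,X+H]$ attain the maximal sum. Dyadically decompose the support of $\alpha$ into blocks $(A,2A]$ with $A\leq A_{I_2}$. Using Cauchy--Schwarz and \eqref{eq:type-alpha-second-moment} as in the proof of Proposition~\ref{prop:typeI-standalone}, we get a set $\mathcal A\subset(A,2A]$ with $|\mathcal A|\geq Z^{-C}A$. For each $a\in\mathcal A$ the inner sum
\[
 \sum_{b_1b_2\in P_0/a}\beta_1(b_1)\beta_2(b_2)F(g(ab_1b_2)\Gamma)
\]
has size $\geq Z^{-C}H/A$. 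Here $P_0/a=\{m\colon am\in P_0\}$ is a short interval of length $H/A$ around $X/A$. We may assume $H/A\geq \mathcal R (X/A)^{1/3}$, since otherwise \eqref{eq:typeI2-scale} holds.

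\emph{Step 1: hyperbola decomposition.} For fixed $a$, set $h=g(a\cdot)$, which is polynomial of the same degree. Following \cite{MSTT}, parametrise the lattice points $b_1b_2$ in the short interval near the hyperbola $b_1b_2\approx X/a$ by paths $t\mapsto(m+tq)(n-tb)$. Here $b/q$ are rationals approximating the slope of the hyperbola, with $q\leq Q\approx K$ and $b\leq WB$. The length scale $K$ is chosen so that $K^2BQ\asymp H/A$, matching $L\leq16K^2BQ$ in Lemma~\ref{lem:concatenate-quadratic-path-factorisations}. Along each such path, both $b_1=m+tq$ and $b_2=n-tb$ run over arithmetic progressions. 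We remove $\beta_1,\beta_2$ by \cite[Lemma~2.2(iii)]{MSTT}, applied twice, at a cost of $\delta^{-2}$ and passage to subprogressions of $t$ modulo $q_\beta$. Pigeonholing then yields $\geq BQ/W^2$ pairs $(b,q)$ with progressions $P_{b,q}\subseteq[K]$ of length $\geq K/W$ and
\[
 \Bigl|\sum_{t\in P_{b,q}}F(h(z_{b,q}(t))\Gamma)\Bigr|\geq W^{-1}|P_{b,q}|,\qquad W=Z^{O(1)}.
\]
The condition $H\gg X^{1/3}A^{2/3}$ is exactly what guarantees that the paths have length $K$ exceeding a power of $W^{D^C}$, so that the error from the curvature of the hyperbola is negligible. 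This is the source of the Type $I_2$ range.

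\emph{Step 2: local factorisations.} The sequence $t\mapsto h(z_{b,q}(t_{b,q}+a_{b,q}n))$ is polynomial with respect to the doubled filtration $G^{[2]}$. This filtration has degree $\leq 2(k-1)$, the same step, and the same one-dimensional $G^{(s)}$. Theorem~\ref{thm:Leng-equidistribution} therefore applies with $t=1$. Conclusion (i) contradicts $K/W\geq W^{D^C}$. Hence we obtain $W^{D^C}$-factorisations through subgroups of step $\leq s-1$. Lemma~\ref{lem:concatenate-quadratic-path-factorisations} then gives, for each $a\in\mathcal A$, a factorisation of $g(a\cdot)$ on $J_a=(X/a,(X+H)/a]$ through a group of step $\leq s-1$.

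\emph{Step 3: concatenation in $a$.} These factorisations are on the full interval $J_a$, i.e.\ with $q_a=1$ and $N_a\asymp H/A$. Lemma~\ref{lem:concatenate-dilated-factorisations} therefore applies directly. It gives either $H\leq RA\leq\mathcal R X^{1/3}A_{I_2}^{2/3}$, or the global step-descent factorisation. All bounds are of the form $Z^{D^{O_k(1)}}$, which is absorbed into $\mathcal R$.

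The main obstacle is Step 1. One must carry out the hyperbola covering quantitatively, with polynomial-in-$Z$ losses. In particular, the pigeonholed set of pairs $(b,q)$ must have density $W^{-2}$ in the full box $[0,WB]\times[1,Q]$, which is what Lemma~\ref{lem:concatenate-quadratic-path-factorisations} needs for its Vinogradov step. The parameters $K,B,Q$ must be chosen consistently with $L\leq 16K^2BQ$. For the covering itself, I would try to reuse \cite[Section~8]{MSTT} directly, and only track that every loss there is polynomial in $\delta^{-1}\log X$.
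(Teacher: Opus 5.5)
Your Steps 2 and 3 match the paper's argument: Leng's theorem on the doubled filtration along each path, then Lemma~\ref{lem:concatenate-quadratic-path-factorisations}, then Lemma~\ref{lem:concatenate-dilated-factorisations} with $q_a=1$. Step 1, however, has a genuine gap. The hyperbola-path covering exists only in part of the parameter space, and you have nothing for the rest.

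To define the path parameters at all, you must localise $\beta_1,\beta_2$ dyadically at scales $M_0,N_0$. You must then swap so that $M_0\leq N_0$, hence $M_0\leq(2Y)^{1/2}$, where $Y=X/a$ and $L=H/a$. The admissible choices are $Q\asymp M_0/(LY)^{1/4}$, $K=LQ/M_0\asymp L^{3/4}Y^{-1/4}$ and $B=YQ/M_0^2$, so $Q\approx K$ is not right. Lemma~\ref{lem:concatenate-quadratic-path-factorisations} needs all three of $K,B,Q$ to exceed $W^{D^C}$.
\begin{itemize}
\item The hypothesis $L\gg Y^{1/3}$ makes $K$ large.
\item The swap makes $B\gg(Y/L)^{1/4}$ large.
\item $Q$ is large only when $M_0\gg T(LY)^{1/4}$, for a threshold $T=Z^{D^{O_k(1)}}$.
\end{itemize}
If $M_0<(LY)^{1/4}$, there are no hyperbola paths with $q\geq1$ at all; think of $\beta_1$ supported near $1$. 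If $M_0\leq T(LY)^{1/4}$, the $q$-range is too short for the Vinogradov step. In this regime your argument produces no factorisation of $g(a\cdot)$.

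The missing idea is that this regime is a Type I situation. Suppose $M_0\leq16T(LY)^{1/4}$. Then $(\beta_1\1_{(M_0,2M_0]})*\beta_2$ is a Type I sum with $A_I=2M_0$; the second moment of $\beta_1$ is controlled by its pointwise bound, which the total variation norm provides. Proposition~\ref{prop:typeI-standalone} on $(Y,Y+L]$ then gives one of two outcomes:
\begin{itemize}
\item $L\leq2TM_0\leq32T^2(LY)^{1/4}$, i.e.\ $L\leq T^4Y^{1/3}$, which contradicts the failure of \eqref{eq:typeI2-scale};
\item a lower-step factorisation of $g(a\cdot)$ on $(Y,Y+L]$ directly, which feeds into the outer concatenation exactly as in your Step 3.
\end{itemize}
So the threshold $X^{1/3}A_{I_2}^{2/3}$ comes from balancing the Type I requirement $L\gg M_0$ against the hyperbola requirement $M_0\gg(LY)^{1/4}$, not from the path length alone.

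A smaller omission: extracting $\mathcal A$ ``as in the Type I proof'' needs an upper bound for each inner sum, and $\beta_1*\beta_2$ is unbounded. The paper uses Shiu's bound for $d_2$ on $(X/a,(X+H)/a]$, which applies because $H/a>(X/a)^{1/3}$.
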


\begin{proof} We may assume that $X$ is sufficiently large in terms of $k$, since otherwise the claim is immediate by taking $C_{*}$ large.

We write $f=\alpha*\beta_1*\beta_2$ and set
$Z=M(\log X)/\delta$. We assume that the Type $I_2$ range case fails, so
\begin{equation}
 \label{eq:I2-range-excluded}
 H>\mathcal R X^{1/3}A_{I_2}^{2/3},
\end{equation}
and prove the step descent conclusion. Since $H\leq X$, we have
\begin{equation}
 \label{eq:I2-outer-support-range}
 A_{I_2}<X\mathcal R^{-3/2}<X.
\end{equation}
All constants in this proof depend only on $k$.

\emph{Reduction to two-variable sums.}
We choose an arithmetic progression $P_0\subset(X,X+H]$ attaining \eqref{eq:type-I-I2-correlation}. By the pigeonhole principle, there exist dyadic numbers $M_0\geq1/2$ and $N_0\geq1/2$ such that
\[
\sum_{a\leq A_{I_2}}|\alpha(a)||S_a|\gg \frac{\delta H}{\log^2 X},
\]
where 
\begin{equation*}
 S_a=\sum_{M_0<m\leq2M_0}
 \sum_{\substack{N_0<n\leq2N_0\\X/a<mn\leq(X+H)/a}}
 \beta_1(m)\beta_2(n)\1_{P_0}(amn)F(g(amn)\Gamma).
\end{equation*}
Pigeonholing again in $a$ and applying Cauchy--Schwarz and \eqref{eq:type-alpha-second-moment} give
\begin{equation}
 \label{eq:I2-outer-second-moment}
\sum_{\substack{A<a\leq2A\\a\leq A_{I_2}}}|S_a|^2\gg\frac{\delta^3H^2}{A\log^6 X},
\end{equation}
for some dyadic $1/2\leq A\leq A_{I_2}$. We necessarily have $AM_0N_0\asymp X$.  Interchanging the last two variables if necessary, we may assume that
$M_0\leq N_0$.  Then
\begin{equation}
 \label{eq:I2-short-dyadic-variable}
 M_0\leq(2X/A)^{1/2}.
\end{equation}

We replace $\beta_1$ by $\beta_1\1_{(M_0,2M_0]}$ and $\beta_2$ by $\beta_2\1_{(N_0,2N_0]}$, and retain the notation $\beta_1,\beta_2$ for these restricted sequences.  If $q_0\leq\delta^{-1}$ is the common modulus in \eqref{eq:type-I2-TV}, these restrictions satisfy
\begin{equation}
 \label{eq:I2-restricted-TV}
 \|\beta_i\|_{\TV(\N;q_0)}\leq3\delta^{-1}
 \qquad\text{for }i=1,2,
\end{equation}
since restriction to an interval adds at most two endpoint jumps on each residue class:
\[
 \begin{aligned}
 \|\beta_i\1_{(V,2V]}\|_{\TV(\N;q_0)}
 &\leq\|\beta_i\|_{\TV(\N;q_0)}
+2\sum_{c\bmod q_0}
\sup_{n\equiv c\ (q_0)}|\beta_i(n)|\\
 &\leq3\|\beta_i\|_{\TV(\N;q_0)}
 \leq3\delta^{-1}
 \qquad\text{for }V\in\{M_0,N_0\}.
 \end{aligned}
\]

Since \eqref{eq:I2-range-excluded} holds and $a\leq A_{I_2}$, we have $H/a>\mathcal R(X/a)^{1/3}\geq4(X/a)^{1/3}$. Since $|\beta_i|\leq \delta^{-1}$, Shiu's bound
\cite[Lemma 2.17]{MSTT} gives
\begin{equation}
 \label{eq:I2-full-row-upper}
 |S_a|\leq\delta^{-2}
 \sum_{X/a<v\leq(X+H)/a}d_2(v)
 \ll\delta^{-2}\frac Ha\log(X/a).
\end{equation}
Comparing \eqref{eq:I2-outer-second-moment} and
\eqref{eq:I2-full-row-upper}, we can choose a constant $C_1$ sufficiently large such that there is a set $\mathcal A\subseteq(A,2A]\cap[1,A_{I_2}]\cap\N$ of at least $Z^{-C_1}A$ values of $a$, each satisfying
\begin{equation}
 \label{eq:I2-many-outer-dilates}
 |S_a|\geq Z^{-C_1}H/A.
\end{equation}
We choose $C_2'$ sufficiently large in terms of $C_1$ and set $\rho_0=Z^{-C_2'}$.  This choice ensures
\begin{equation}
 \label{eq:I2-reduced-coefficients}
 \begin{gathered}0<\rho_0<\min(1/10,1/\log(X/a)),\qquad
 q_0\leq\rho_0^{-1},\\
 \|\beta_i\|_{\TV(\N;q_0)}\leq\rho_0^{-1},
 \qquad |S_a|\geq\rho_0H/a,\end{gathered}
\end{equation}
and $\rho_0\leq\delta$, so $|\xi|\leq M/\rho_0$.

For each $a\in\mathcal A$, write
\begin{equation}
 \label{eq:I2-reduced-data}
 Y=X/a,\qquad L=H/a,\qquad h(n)=g(an),\qquad
 P=\{v\in(Y,Y+L]\colon av\in P_0\}.
\end{equation}
Since $a\leq A_{I_2}$, $a\in(A,2A]$, and $H\leq X$, \eqref{eq:I2-outer-support-range} and \eqref{eq:I2-short-dyadic-variable} give
\begin{equation}
 \label{eq:I2-reduced-ranges}
 Y>\mathcal R^{3/2}\geq3,\qquad
 L\leq Y,\qquad M_0\leq2Y^{1/2}.
\end{equation}
With $J=(M_0,2M_0]\cap\Z$,
\eqref{eq:I2-many-outer-dilates} and \eqref{eq:I2-reduced-coefficients} yield
\begin{equation}
 \label{eq:I2-preliminary-correlation}
 \left|\sum_{m\in J}\sum_{Y<mn\leq Y+L}
 \beta_1(m)\beta_2(n)\1_P(mn)F(h(mn)\Gamma)\right|
 \geq\rho_0L.
\end{equation}

Let $C=C(k)$ be sufficiently large, and set
\begin{equation}
 \label{eq:I2-threshold-definition}
 Z_0=M/\rho_0,\qquad
 T=Z_0^{D^C}\geq\max(32,M,2Z^{C_1}).
\end{equation}
Since $Z_0\leq Z^{C_2'+1}$, we can choose $C_*$ large enough so that
\begin{equation}
 \label{eq:I2-threshold-size}
 T^4\leq\mathcal R.
\end{equation}
The failure of the Type $I_2$ range case gives the stronger lower bound
\begin{equation}
 \label{eq:I2-long-reduced-interval}
 L>\mathcal R Y^{1/3}(A_{I_2}/a)^{2/3}
 \geq T^4Y^{1/3}.
\end{equation}
Thus $L>Y^{1/3}$ and $L\geq2$.

If $M_0\leq16T(LY)^{1/4}$, set $f_0=(\beta_1\1_J)*\beta_2$. The total variation hypotheses give
$|\beta_1(m)|\leq\rho_0^{-1}$, so
\begin{equation*}
 \sum_{m\leq U}|\beta_1(m)\1_J(m)|^2\leq\rho_0^{-2}U
 \quad\text{for every }U\geq1.
\end{equation*}
Thus $f_0$ is a $(\rho_0^2,2M_0)$ Type I sum in the sense of
Definition \ref{def:type-sums}(i). We apply Proposition \ref{prop:typeI-standalone} with $X=Y$, $H=L$, $\delta=\rho_0^2$, and $A_I=2M_0$.
The correlation in \eqref{eq:I2-preliminary-correlation} is at least
$\rho_0^2L$, and $|\xi|\leq M/\rho_0^2$. Let $\mathcal R_0$ be the parameter in \eqref{eq:typeI-single-parameter} for these data. Since $\log Y\leq2\rho_0^{-1}$, choosing $C$ sufficiently large gives
\begin{equation*}
 \mathcal R_0
 =\left(\frac{M\log Y}{\rho_0^2}\right)^{D^{C_0(k)}}
 \leq T.
\end{equation*}
The Type I range case of \ref{prop:typeI-standalone} gives
\begin{equation*}
 L\leq2\mathcal R_0M_0\leq2TM_0\leq32T^2(LY)^{1/4}.
\end{equation*}
Dividing by $L^{1/4}$ and raising to the power $4/3$ gives
\begin{equation*}
 L\leq32^{4/3}T^{8/3}Y^{1/3}\leq T^4Y^{1/3},
\end{equation*}
since $T\geq32$. Equation \eqref{eq:I2-long-reduced-interval} excludes this length
case, so the step descent case gives the factorisation
\begin{equation}
 \label{eq:I2-short-factor-factorisation}
 h=\varepsilon_a h_a\gamma_a\quad\text{on }(Y,Y+L]
\end{equation}
through a subgroup of step at most $s-1$, with induced complexity and rational-coset period at most $T$.

\emph{Hyperbola decomposition and path correlations.}
From now on, let $M_0>16T(LY)^{1/4}$, and fix $a\in\mathcal A$. Set
\begin{equation}
 \label{eq:I2-hyperbola-scales}
 Q=\left\lfloor\frac{M_0}{(LY)^{1/4}}\right\rfloor,
 \qquad K=\frac{LQ}{M_0},\qquad B=\frac{YQ}{M_0^2}.
\end{equation}
Since $M_0/(LY)^{1/4}>16T\geq512$, we have
\begin{equation}
 \label{eq:I2-hyperbola-parameter-ranges}
 \begin{gathered}
 \frac{M_0}{2(LY)^{1/4}}\leq Q\leq\frac{M_0}{(LY)^{1/4}},\\
 Q>T,\qquad K>T,\qquad B>T,\\
 \frac{L}{16}\leq K^2BQ=\frac{L^2YQ^4}{M_0^4}\leq L,
 \end{gathered}
\end{equation}
where the lower bound for $K$ follows from \eqref{eq:I2-long-reduced-interval}, which gives $K\geq\tfrac12L^{3/4}Y^{-1/4}>\tfrac12T^3>T$. The assumption $M_0>16T(LY)^{1/4}$ and \eqref{eq:I2-reduced-ranges} give
\[
 16T(LY)^{1/4}<M_0\leq2Y^{1/2},
 \qquad
 B\geq\tfrac14(Y/L)^{1/4}>2T.
\]
The bounds for $K^2BQ$ follow from the two bounds for $Q$.
In particular, $K>1$ and $Q>M_0/L$.

A hyperbola path is a finite progression of pairs $(m+tq,n-tb)$ with fixed integers $m,n,b,q$; along it the
product $(m+tq)(n-tb)$ is quadratic in $t$. We obtain paths inside $Y<mn\leq Y+L$ from the following input.

We apply \cite[Corollary 8.2]{MSTT} with its parameters $(X,H,M,\delta)=(Y,L,M_0,\rho_0)$, progression $P$, and function $(n,m)\mapsto F(h(mn)\Gamma)$. The required parameter range is
verified by \eqref{eq:I2-reduced-ranges}, \eqref{eq:I2-long-reduced-interval}, and \eqref{eq:I2-hyperbola-parameter-ranges}: in particular,
\begin{equation*}
 Y^{1/3}<L\leq Y,\qquad M_0/L<Q\leq M_0/(LY)^{1/4},
 \qquad J\subset[M_0,2M_0].
\end{equation*}
The modulus and total variation bounds, including $\rho_0<1/\log Y$, are \eqref{eq:I2-reduced-coefficients}, and its correlation hypothesis is \eqref{eq:I2-preliminary-correlation}. The number of pairs in that corollary is expressed here using
\begin{equation*}
 YQ^2/M_0^2=BQ.
\end{equation*}
It gives at least $\rho_0^{C_3}BQ$ coprime pairs $(b,q)$ satisfying
\begin{equation}
 \label{eq:I2-corollary-pairs}
 \rho_0^{C_3}Q\leq q\leq Q,\qquad b\asymp Yq/M_0^2,
\end{equation}
where $C_3$ is a sufficiently large absolute constant. For every such pair there are integers $m_{b,q},n_{b,q}$ and $K_{b,q}\geq1$ such that the progression
\begin{equation}
 \label{eq:I2-selected-progression}
 \mathfrak p_{b,q}=\{(m_{b,q}+tq,n_{b,q}-tb)\colon 1\leq t\leq K_{b,q}\}
 \subset\{(m,n)\in\Z^2\colon m\in J,\ Y<mn\leq Y+L\}
\end{equation}
has $K_{b,q}\leq K$. Write
\begin{equation*}
 I_{b,q}=[K_{b,q}],\qquad
 z_{b,q}(t)=(m_{b,q}+tq)(n_{b,q}-tb).
\end{equation*}
The correlation lower bound provided by the hyperbola decomposition result of \cite[Corollary 8.2]{MSTT} is
\begin{equation}
 \label{eq:I2-path-correlation}
 \left|\sum_{t\in I_{b,q}}F(h(z_{b,q}(t))\Gamma)\right|^*
 \geq\rho_0^{C_3}K.
\end{equation}

Let $\mathcal S$ be the set of these pairs and set $\delta_1=\rho_0^{C_3}$. By \eqref{eq:I2-corollary-pairs}, there is an absolute constant $C_{\mathrm h}\geq1$ such that
\begin{equation}
 \label{eq:I2-populated-pairs}
 |\mathcal S|\geq\delta_1BQ,\qquad
 0\leq b\leq C_{\mathrm h}B,\qquad
 \delta_1Q\leq q\leq Q
 \quad\text{for every }(b,q)\in\mathcal S.
\end{equation}

\emph{Factorisations on quadratic path progressions.}
For each retained pair, choose a progression $t_{b,q}+d_{b,q}[N_{b,q}]\subset I_{b,q}$ attaining the maximal sum in \eqref{eq:I2-path-correlation}. Since $F$ is
$1$-bounded,
\begin{equation}
 \label{eq:I2-witness-geometry}
 \delta_1K\leq N_{b,q}\leq K_{b,q}\leq K.
\end{equation}
Write $d=k-1$. We choose $C$ in \eqref{eq:I2-threshold-definition} large enough that $\delta_1T\geq2d+2$. Then
$N_{b,q}\geq2d+2$ by \eqref{eq:I2-hyperbola-parameter-ranges}.
We can expand 
\begin{equation*}
 z_{b,q}(t)=m_{b,q}n_{b,q}
 +(qn_{b,q}-bm_{b,q})t-bqt^2.
\end{equation*}
Write $h(z)=\prod_{i=0}^dh_i^{\binom zi}$ with $h_i\in G_i$, using the Taylor representation in \cite[Definition 2.5]{Leng}. Each $\binom{z_{b,q}(t)}i$ is integer-valued of degree at most $2i$. Thus $t\mapsto h(z_{b,q}(t))$, and its affine restriction $n\mapsto h(z_{b,q}(t_{b,q}+d_{b,q}n))$, are polynomial for the doubled filtration $G^{[2]}_r=G_{\lceil r/2\rceil}$, of degree at most $2d$. This filtration has the same nilpotency step and Mal'cev basis.

We apply Theorem \ref{thm:Leng-equidistribution} with degree $2d$, one variable, and correlation parameter $\delta_1/2$ to $n\mapsto h(z_{b,q}(t_{b,q}+d_{b,q}n))$ on $[N_{b,q}]$.
The frequency condition follows from $|\xi|\leq M/\rho_0$ and $\delta_1/2\leq\rho_0$. The short case of Theorem \ref{thm:Leng-equidistribution} would imply
\begin{equation*}
 K\leq\delta_1^{-1}N_{b,q}
 \leq\delta_1^{-1}Z_0^{D^{C_5}}\leq T,
\end{equation*}
contrary to \eqref{eq:I2-hyperbola-parameter-ranges}, after increasing $C$. The factorisation conclusion in the other case of Theorem \ref{thm:Leng-equidistribution} gives a
$Z_0^{D^{C_5}}$-factorisation of
$n\mapsto h(z_{b,q}(t_{b,q}+d_{b,q}n))$ on $[N_{b,q}]$ through a connected subgroup of step at most $s-1$. The induced nilmanifold complexity and the period of the rational coset sequence are at most $Z_0^{D^{C_5}}$ as well,
after increasing $C_5$.
Choose a constant $C_6$ large enough that, with
\begin{equation}
 \label{eq:I2-local-path-parameter}
 W=Z_0^{D^{C_6}},
\end{equation}
the subgroup rationality and induced complexity, smoothness, pointwise rationality, and period are all at most $W$,
and $W\geq\max(16,M,C_{\mathrm h},4\delta_1^{-1})$.

We now apply Lemma
\ref{lem:concatenate-quadratic-path-factorisations}.
The pair count and coordinate hypotheses of that lemma follow from \eqref{eq:I2-populated-pairs}, since
\[
 |\mathcal S|\geq\delta_1BQ\geq BQ/W^2,
 \qquad b\leq C_{\mathrm h}B\leq WB
 \quad\text{for every }(b,q)\in\mathcal S.
\]
We take the interval in that lemma to be $(Y,Y+L]\cap\Z$. The path and subprogression hypotheses of that lemma follow from \eqref{eq:I2-selected-progression} and \eqref{eq:I2-witness-geometry}. The scale conditions are \eqref{eq:I2-hyperbola-parameter-ranges}. If $\widetilde{C}$ denotes the fixed exponent in Lemma~\ref{lem:concatenate-quadratic-path-factorisations}, we choose the exponent $C$ in
\eqref{eq:I2-threshold-definition} sufficiently large that
$T\geq W^{D^{\widetilde{C}}}$.
Equation \eqref{eq:I2-hyperbola-parameter-ranges} then gives
$K>W^{D^{\widetilde{C}}}$,
$B>W^{D^{\widetilde{C}}}$, and
$Q>W^{D^{\widetilde{C}}}$.
Thus the path lemma provides a connected lower-step subgroup $G_a\leq G$ and a factorisation
\begin{equation}
 \label{eq:I2-long-factorisation}
 h=\varepsilon_a h_a\gamma_a\quad\text{on }(Y,Y+L]
\end{equation}
with subgroup rationality, induced complexity, smoothness, rationality, and rational coset period all at most $T$.

\emph{Concatenation in the outer variable.}
The short case gives \eqref{eq:I2-short-factor-factorisation}, while the long case gives \eqref{eq:I2-long-factorisation}. Therefore, for every $a\in\mathcal A$ the sequence $h(n)=g(an)$ admits a factorisation
\begin{equation}
 \label{eq:I2-all-outer-factorisations}
 g(an)=\varepsilon_a(n)h_a(n)\gamma_a(n)
 \quad\text{for every }n\in(X/a,(X+H)/a],
\end{equation}
where $h_a$ takes values in a connected $T$-rational subgroup $G_a$ of step at most $s-1$, $\varepsilon_a$ is $(T,(X/a,(X+H)/a])$-smooth, and $\gamma_a$ is $T$-rational.

For each $a\in\mathcal A$, set
\begin{equation*}
 B_a=(X/a,(X+H)/a]\cap\Z=\lfloor X/a\rfloor+[N_a].
\end{equation*}
Since $a\leq2A$ and $H>4A$ by \eqref{eq:I2-range-excluded}, we have $N_a\geq2$ and
$N_a\geq H/(2A)-1\geq H/(AT)$. Also $|\mathcal A|\geq Z^{-C_1}A\geq A/T$ by
\eqref{eq:I2-threshold-definition}.
We apply Lemma \ref{lem:concatenate-dilated-factorisations} with $q_a=1$, $r_a=\lfloor X/a\rfloor$ and
$Q=T$, with its local factorisation input given by \eqref{eq:I2-all-outer-factorisations}, translated to $[N_a]$.
The lemma gives either
\begin{equation*}
 H\leq T^{D^{C_{\mathrm{dil}}}}A
 \leq\mathcal R X^{1/3}A_{I_2}^{2/3},
\end{equation*}
after increasing $C_*$, or a global lower-step factorisation \eqref{eq:type-I-I2-factorization}. The first alternative contradicts \eqref{eq:I2-range-excluded}, while the second has the subgroup, induced complexity, smoothness, rationality and period bounded by $\mathcal R$, again after increasing $C_*$.
This proves the proposition.
\end{proof}

\section{An efficient non-abelian Type II inverse theorem}
\label{sec:row-column-typeII}

In this section we prove the non-abelian Type II estimate with growing dimension needed for Theorem \ref{thm:corrected-main}.

For $X,H,A\in\N$ with $2\leq H\leq X$, $AH/X\geq4$ and $H/A\geq4$, we cover $(A,2A]$ by $O(X/H)$ subintervals 
\begin{equation}
 \label{eq:rowcol-cover-intervals1}
 \begin{aligned}
 I_{A'}&=\left(A',\left(1+\frac HX\right)A'\right],
 \end{aligned}
 \end{equation}
 with each point belonging to at most $O(1)$ such intervals. We also
define the corresponding intervals
\begin{equation} \label{eq:rowcol-cover-intervals2}
\begin{aligned}
 J_{A'}&=\left(\left(1+\frac HX\right)^{-1}\frac X{A'},\left(1+\frac HX\right)\frac X{A'}\right].
 \end{aligned}
\end{equation}
The intervals have cardinalities $|I_{A'}|\asymp AH/X$ and $|J_{A'}|\asymp H/A$, and
\begin{equation}
 \label{eq:rowcol-cover-containment}
 (X/a,(X+H)/a]\subset J_{A'}\subset(X/(2A),2X/A]
 \qquad\text{for every }a\in I_{A'}.
\end{equation}
Note that for a given $b$, the condition $b\in J_{A'}$ restricts $A'$ to an interval of length $O(AH/X)$, which contains only $O(1)$ left endpoints of the $I_{A'}$, so every point belongs to at most $O(1)$ intervals $J_{A'}$.

We first prove a bilinear concatenation result which will be a key step in the proof of the Type II estimate.

\begin{lemma}[Bilinear concatenation and step reduction]
\label{lem:rowcol-bilinear-concatenation}
Let $d,s$ be integers with $2\leq s\leq d$. There is $C=C(d,s)\geq1$ with the following property. Let $D,X,H,A\in\N$ and $M\geq2$, where $D\geq2$ and
\begin{equation*}
 M^{D^C}\max\{A,X/A\}<H\leq X.
\end{equation*}
Let $G/\Gamma$ be an $s$-step filtered nilmanifold of degree at most $d$, dimension at most $D$ and complexity at most $M$, with $G^{(s)}$ one-dimensional, and let $g\in\poly(\Z,G)$. Let $\xi$ be a nonzero vertical character on $G^{(s)}$, write $\Xi=(\xi,-\xi,-\xi,\xi)$, and define
\begin{equation}
 \label{eq:rowcol-fourmap}
 \mathbf g(a,a',b,b')=(g(ab),g(a'b),g(ab'),g(a'b')).
\end{equation}
Suppose that for at least $X/(MH)$ of the intervals $I_{A'}$ in \eqref{eq:rowcol-cover-intervals1} there are horizontal characters $\lambda_1,\ldots,\lambda_t$ on $G^4$, with $1\leq t\leq4D$, satisfying
\begin{equation}
 \label{eq:rowcol-concat-local-characters}
 |\lambda_j|\leq M,\qquad
 \|\lambda_j\circ\mathbf g\|_{C^\infty(I_{A'}^2\times J_{A'}^2)}\leq M
 \qquad\text{for every }1\leq j\leq t,
\end{equation}
and
\begin{equation}
 \label{eq:rowcol-concat-local-bracket}
 \Xi([h_1,\ldots,h_s])=0
 \qquad\text{whenever }h_1,\ldots,h_s\in\bigcap_{j=1}^t\ker\lambda_j.
\end{equation}
Then $g$ has an $M^{D^C}$-factorisation on $(X,X+H]$ through a subgroup $G_*\leq G$ of step at most $s-1$. The nilmanifold $G_*/(G_*\cap\Gamma)$, with the induced filtration, has complexity at most $M^{D^C}$ in an induced rational basis, and the rational factor in the factorisation is periodic modulo $\Gamma$ with period at most $M^{D^C}$.
\end{lemma}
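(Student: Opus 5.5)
The plan is to convert the four-variable data \eqref{eq:rowcol-concat-local-characters}--\eqref{eq:rowcol-concat-local-bracket} into a list of horizontal characters on $G$. These characters should be smooth along $g$ on all of $(X,X+H]$, and their common kernel should have step at most $s-1$. Once such a list is available, Lemma~\ref{lem:character-factorisation} (with $t=1$) gives the factorisation, the complexity bounds and the period bound, exactly as at the end of the proof of Lemma~\ref{lem:concatenate-dilated-factorisations}. Throughout, all losses are of the form $M^{D^{O_{d,s}(1)}}$. First I pigeonhole, as in \eqref{eq:dilated-common-list}, over the at most $(2M+1)^{16D^2}$ possible lists $(\lambda_1,\ldots,\lambda_t)$, so that a single list serves a set of at least $X/(M^{O(D^2)}H)$ intervals $I_{A'}$. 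Write $\lambda_j=(\lambda_{1,j},\lambda_{2,j},\lambda_{3,j},\lambda_{4,j})$ with each $\lambda_{i,j}$ a horizontal character on $G$.

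\emph{The commutator step (isolating one coordinate).} Let $\mathcal L_G$ be the list of characters $\lambda_{1,j}$ and $\sigma_j\coloneqq\lambda_{1,j}+\lambda_{2,j}+\lambda_{3,j}+\lambda_{4,j}$ for $1\leq j\leq t$, and let $K$ be their common kernel in $G$. I claim that $K$ has step at most $s-1$. Let $h_1,\ldots,h_s\in K$. Then $(h_1,1,1,1)$ is annihilated by every $\lambda_j$, because $\lambda_{1,j}(h_1)=0$. For $i\geq2$, the diagonal element $(h_i,h_i,h_i,h_i)$ is also annihilated, because $\sigma_j(h_i)=0$. The iterated commutator of these $s$ elements of $G^4$ equals $([h_1,\ldots,h_s],1,1,1)$, since the first entry is trivial in the last three coordinates. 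Applying \eqref{eq:rowcol-concat-local-bracket} gives $\xi([h_1,\ldots,h_s])=0$. Since $G^{(s)}$ is one-dimensional and $\xi\ne0$, the map $\xi$ is injective, so $[h_1,\ldots,h_s]=1$. (I would also note that the case $s\geq2$ is used here only through connectedness of $K$, which holds as in Lemma~\ref{lem:character-factorisation}.) It therefore remains to show that suitable nonzero integer multiples of $\lambda_{1,j}$ and $\sigma_j$ have bounded $C^\infty((X,X+H])$ norm along $g$. Passing to a maximal independent sublist does not change $K$.

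\emph{Transferring smoothness to $G$.} For $\sigma_j$, I restrict $\lambda_j\circ\mathbf g$ to the diagonal $a=a'$, $b=b'$. The restriction of a polynomial to a diagonal sub-box keeps its $C^\infty$ norm bounded up to $O_d(1)$ factors, so $(a,b)\mapsto\sigma_j(g(ab))$ is $M^{O(1)}$-smooth on $I_{A'}\times J_{A'}$. For $\lambda_{1,j}$, I freeze $a'=a_0\in I_{A'}$ and $b'=b_0\in J_{A'}$. This gives
\[
\lambda_{1,j}(g(ab))+\lambda_{2,j}(g(a_0b))+\lambda_{3,j}(g(ab_0))+\mathrm{const},
\]
and the middle terms depend on only one of the two variables. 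Hence every mixed coefficient, that is, every coefficient of $\binom a{i_1}\binom b{i_2}$ with $i_1,i_2\geq1$, of $(a,b)\mapsto\lambda_{1,j}(g(ab))$ is controlled on $I_{A'}\times J_{A'}$. Writing $P(n)=\lambda_{1,j}(g(n))=\sum_\ell\alpha_\ell n^\ell$, the coefficient of $a^ib^i$ then gives $\|q\alpha_i\|_{\R/\Z}\ll M^{O(1)}(|I_{A'}||J_{A'}|)^{-i}$, after clearing bounded denominators as in the descending induction of Lemma~\ref{lem:concatenate-quadratic-path-factorisations}. I would rather recast this as a statement about $n\mapsto P(an)$ on $J_{A'}$, uniformly for many $a\in I_{A'}$, since that is the input format of Lemma~\ref{lem:concatenate-dilated-factorisations}. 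The same applies to $\sigma_j$.

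\emph{Concatenation (expected main obstacle).} The local scale $|I_{A'}||J_{A'}|\asymp H^2/X$ is smaller than $H$. So the information coming from a single $A'$ is not enough, and I need to concatenate over the many good intervals $I_{A'}$. Equivalently, I concatenate over the at least $A/M^{O(D^2)}$ values $a\in(A,2A]$ lying in good intervals, for each of which $b\mapsto\lambda(g(ab))$ is smooth in the mixed-coefficient sense on $J_{A'}\supset(X/a,(X+H)/a]$, by \eqref{eq:rowcol-cover-containment}. I would run the affine extrapolation and scalar concatenation from the proof of Lemma~\ref{lem:concatenate-dilated-factorisations}, which extrapolates to $(X/a,(X+H)/a]$ and then applies \cite[Corollary~2.4]{MSTT}, to each of the characters $\lambda_{1,j}$ and $\sigma_j$. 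The hypothesis $H>M^{D^C}A$ rules out the short alternative. The hypothesis $H>M^{D^C}X/A$ guarantees that $|J_{A'}|$ and $|I_{A'}|$ are large enough to run the interpolation.

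The delicate point is the missing pure-$b$ and pure-$a$ coefficients in the $\lambda_{1,j}$ data. I would handle it by dilating in the other direction as well: the roles of $a$ and $b$ are symmetric, and the bound $H>M^{D^C}X/A$ makes the $b$-range long enough. Combining the information from both dilation families, via the two-variable Vinogradov lemma \cite[Lemma~A.11]{Leng}, should recover every nonconstant coefficient $\alpha_\ell$ at the scale $H^{-\ell}$. Applying Lemma~\ref{lem:character-factorisation} then finishes the proof.
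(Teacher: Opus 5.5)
\emph{There is a genuine gap: your characters $\lambda_{1,j}$ need not have any bounded multiple that is smooth along $g$ on $(X,X+H]$.} Your commutator algebra is correct. So is your treatment of $\sigma_j$: restricting to $a=a'$, $b=b'$ gives genuine two-variable smoothness of $\sigma_j(g(ab))$, and that can be concatenated. The failure is at $\lambda_{1,j}$, so $g$ need not factor through your $K$.

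Freezing $a',b'$ only controls the mixed coefficients of $(a,b)\mapsto\lambda_{1,j}(g(ab))$. This is information at scale $|I_{A'}||J_{A'}|\asymp H^2/X<H$, and it is the same for every $A'$. So no concatenation over $A'$, and no two-variable Vinogradov argument, can upgrade it to scale $H$.

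Here is a concrete counterexample.
\begin{itemize}
\item Let $G$ be the Heisenberg group with horizontal coordinate characters $\eta_1=x$ and $\eta_2=y$.
\item Let $g(n)=\exp(\alpha nX_1)$ with $\alpha=cX/H^2$, where $c>0$ is a small absolute constant. Take, say, $A=X^{1/2}$ and $H=X^{7/10}$.
\item Take $\lambda_1=(\eta_1,-\eta_1,-\eta_1,\eta_1)$, together with the four embeddings $(\eta_2,0,0,0),\ldots,(0,0,0,\eta_2)$.
\end{itemize}
Then $\lambda_1\circ\mathbf g=\alpha(a-a')(b-b')$ has $C^\infty(I_{A'}^2\times J_{A'}^2)$ norm at most $2\leq M$ for every $A'$, and the other four compositions vanish. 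Every element of the common kernel has vanishing $y$-coordinates, so the kernel is abelian in each coordinate and \eqref{eq:rowcol-concat-local-bracket} holds trivially. All hypotheses are therefore satisfied. Yet $\|q\alpha\|_{\R/\Z}H=cqX^{3/10}$ for every $1\leq q\leq X^{1/5}$. Hence no bounded multiple of $\lambda_{1,1}=\eta_1$ is smooth on $(X,X+H]$, and $\sigma_1=0$ carries no information.

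The missing idea is to choose component combinations whose restrictions become genuinely one-variable smooth: $\theta^{\textnormal{row}}_j=\lambda_{j,1}+\lambda_{j,3}$ and $\theta^{\textnormal{col}}_j=\lambda_{j,1}+\lambda_{j,2}$.
\begin{itemize}
\item On $b=b'$ one has $\lambda_j(\mathbf g(a,a',b,b))=\theta^{\textnormal{row}}_j(g(ab))+(\lambda_{j,2}+\lambda_{j,4})(g(a'b))$. Differencing in $a$ kills the second term, so $\theta^{\textnormal{row}}_j\circ g(b\,\cdot)$ is smooth on $I_{A'}$ for every $b\in J_{A'}$.
\item Symmetrically, $\theta^{\textnormal{col}}_j\circ g(a\,\cdot)$ is smooth on $J_{A'}$ for every $a\in I_{A'}$.
\item These bounds extend to $(X/b,(X+H)/b]$ and $(X/a,(X+H)/a]$. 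They then concatenate via \cite[Corollary~2.4]{MSTT} at scales $\asymp X/A$ and $A$. This is where both halves of $H>M^{D^C}\max\{A,X/A\}$ are used.
\end{itemize}

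For the step drop you then need two partial elements whose supports meet only in the first coordinate. Let $G_*=\bigcap_j(\ker\theta^{\textnormal{row}}_j\cap\ker\theta^{\textnormal{col}}_j)$ and $z_i\in G_*$. Set $w_1=(z_1,z_1,1,1)$, $w_2=(z_2,1,z_2,1)$ and $w_i=(z_i,z_i,1,1)$ for $i\geq3$. All of these lie in the common kernel, and $[w_1,\ldots,w_s]=([z_1,\ldots,z_s],1,1,1)$.

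Pairing one partial element with your diagonal elements would not work. For example, $[(z_1,z_1,1,1),(z_2,z_2,z_2,z_2)]=([z_1,z_2],[z_1,z_2],1,1)$, which $\Xi$ kills. In the example above this construction gives $G_*=\ker\eta_2$, which contains $g$, consistent with the lemma.
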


\begin{proof}
\emph{A common list of characters.}
For each interval $I_{A'}$ satisfying the hypotheses, let the characters in the statement be $\lambda_{A',1},\ldots,\lambda_{A',t_{A'}}$. Each of these characters on $G^4$ has at most $4D$ integer coordinates, all of absolute value at most $2M$. Since $t_{A'}\leq4D$, there are at most $M^{O(D^2)}$ choices of the character list.

We write $\lambda_{A',j}=(\lambda_{A',j,1},\ldots,\lambda_{A',j,4})$, where the components $\lambda_{A',j,i}$ are horizontal characters on $G$. By the pigeonhole principle, there is a family $\mathcal I_1$ of intervals $I_{A'}$ on which the character lists and their length $t$ are independent of $A'$, and
\begin{equation}
 \label{eq:rowcol-shared-pairs}
 |\mathcal I_1|\geq X/(RH),\qquad R=M^{D^{C_0}},
\end{equation}
where $C_0=C_0(d,s)$ is sufficiently large. Let the common list of characters for $I_{A'}\in\mathcal I_1$ be $\lambda_1,\ldots,\lambda_t$. We write $\lambda_j=(\lambda_{j,1},\ldots,\lambda_{j,4})$, where the components $\lambda_{j,i}$ are horizontal characters on $G$, and define the row and column characters
\begin{equation}
 \label{eq:rowcol-theta-definitions}
\theta_{j}^{\textnormal{row}}=\lambda_{j,1}+\lambda_{j,3},\qquad
\theta_{j}^{\textnormal{col}}=\lambda_{j,1}+\lambda_{j,2}.
\end{equation}

\emph{Smoothness and concatenation.}
For $I_{A'}\in\mathcal I_1$, we consider two diagonal restrictions $\mathbf{g}(a,a',b,b)$ and $\mathbf{g}(a,a,b,b')$ of $\mathbf{g}$. Their character compositions can be written as
\begin{align*}
 \lambda_j(\mathbf g(a,a',b,b))&=\theta_j^{\textnormal{row}}(g(ab))+(\lambda_{j,2}+\lambda_{j,4})(g(a'b)),\\
 \lambda_j(\mathbf g(a,a,b,b'))&=\theta_j^{\textnormal{col}}(g(ab))+(\lambda_{j,3}+\lambda_{j,4})(g(ab')).
\end{align*}
Taking differences in the $a$ variable in the first identity or in the $b$ variable in the second identity makes the second term on the right vanish. We use \eqref{eq:rowcol-concat-local-characters} and bound the zeroth difference by $1/2$ to obtain
\begin{equation*}
 \|\theta_j^{\textnormal{row}}\circ g(b\,\cdot)\|_{C^\infty(I_{A'})}\leq M,\qquad
 \|\theta_j^{\textnormal{col}}\circ g(a\,\cdot)\|_{C^\infty(J_{A'})}\leq M
\end{equation*}
for every $a\in I_{A'}$, every $b\in J_{A'}$ and every $1\leq j\leq t$.

For $b\in J_{A'}$, the interval $(X/b,(X+H)/b]$ has length $\asymp |I_{A'}|$ and lies in the tenfold enlargement of $I_{A'}$. By~\cite[(2.3)]{MRSTT}, we can transfer the smoothness bound to the enlarged interval to conclude that
\begin{equation}
 \label{eq:rowcol-reciprocal-smoothness}
 \begin{aligned}
 \|\theta_j^{\textnormal{row}}\circ g(b\,\cdot)\|_{C^\infty(X/b,(X+H)/b]}&\ll_d M,\\
 \|\theta_j^{\textnormal{col}}\circ g(a\,\cdot)\|_{C^\infty(X/a,(X+H)/a]}&\leq M
 \end{aligned}
\end{equation}
for the same $a,b,j$.

Since the intervals $I_{A'}$ have bounded overlap, from \eqref{eq:rowcol-shared-pairs} we conclude that
\begin{equation*}
 \left|\bigcup_{I_{A'}\in\mathcal I_1}I_{A'}\right|\gg A/R,\qquad
 \left|\bigcup_{I_{A'}\in\mathcal I_1}J_{A'}\right|\gg X/(AR).
\end{equation*}
The second union lies in $(X/(2A),2X/A]$, so one of the dyadic intervals $(B_0,2B_0]$, with either $B_0=X/(2A)$ or $B_0=X/A$, contains $\gg B_0/R$ of its elements. The length hypothesis gives $B_0\geq1$ and $H\geq2\max\{A,B_0\}$. We apply the concatenation result of \cite[Corollary 2.4]{MSTT} to $\theta_j^{\textnormal{col}}\circ g$ at scale $A$ and to $\theta_j^{\textnormal{row}}\circ g$ at scale $B_0$, using density parameter $c_d/R$ for a sufficiently small $c_d>0$. Equation \eqref{eq:rowcol-reciprocal-smoothness} verifies the local hypotheses. The length alternative of that corollary would give
\begin{equation*}
 H\leq M^{D^{O_{d,s}(1)}}\max\{A,X/A\},
\end{equation*}
contradicting our hypothesis when $C$ is sufficiently large. Therefore, each $\theta_j^{\textnormal{row}}$ and $\theta_j^{\textnormal{col}}$ has a positive integer multiple $\widehat\theta_j^{\textnormal{row}}$ or $\widehat\theta_j^{\textnormal{col}}$, respectively, such that
\begin{equation}
 \label{eq:rowcol-concat-global-characters}
 \begin{aligned}
 |\widehat\theta_j^{\textnormal{row}}|&\leq R^{C_1},&\qquad |\widehat\theta_j^{\textnormal{col}}|&\leq R^{C_1},\\
 \|\widehat\theta_j^{\textnormal{row}}\circ g\|_{C^\infty(X,X+H]}&\leq R^{C_1},& \|\widehat\theta_j^{\textnormal{col}}\circ g\|_{C^\infty(X,X+H]}&\leq R^{C_1},
 \end{aligned}
\end{equation}
for every $1\leq j\leq t$, where $C_1=C_1(d,s)$ is sufficiently large. Note that the multiplication of the characters preserves their kernels.

\emph{Step decrease and factorisation.}
We set
\begin{equation*}
 G_*=\bigcap_{j=1}^{t}(\ker\theta_j^{\textnormal{row}}\cap\ker\theta_j^{\textnormal{col}}),
\end{equation*}
and choose one interval $I_{A'}\in \mathcal{I}_1$. For $z_1,\ldots,z_s\in G_*$, the elements
\begin{equation*}
 w_1=(z_1,z_1,1,1),\qquad w_2=(z_2,1,z_2,1),\qquad
 w_i=(z_i,z_i,1,1)\quad\text{for }3\leq i\leq s
\end{equation*}
belong to $\bigcap_{1\leq j\leq t}\ker\lambda_j$, since
\begin{align*}
\lambda_j((z_1,z_1,1,1))&=\lambda_{j,1}(z_1)+\lambda_{j,2}(z_1)=\theta_j^{\textnormal{col}}(z_1)=0,\\
\lambda_j((z_2,1,z_2,1))&=\lambda_{j,1}(z_2)+\lambda_{j,3}(z_2)=\theta_j^{\textnormal{row}}(z_2)=0
\end{align*}
by \eqref{eq:rowcol-theta-definitions}. In $G^4$, commutators are taken coordinatewise, so $$[w_1,w_2]=([z_1,z_2],[z_1,1],[1,z_2],1)=([z_1,z_2],1,1,1).$$
Applying commutators iteratively to this shows that $[w_1,\ldots, w_s]= ([z_1,\ldots,z_s],1,1,1)$, so \eqref{eq:rowcol-concat-local-bracket} gives
\begin{equation}
 \label{eq:rowcol-bracket-isolation}
 0=\Xi([w_1,\ldots,w_s])=\xi([z_1,\ldots,z_s]).
\end{equation}
The homomorphism $\xi$ is not identically zero, and hence is injective on the one-dimensional group $G^{(s)}$. Hence $G_*$ has step at most $s-1$.

We choose a maximal linearly independent sublist of the nonzero characters $\widehat\theta_j^{\textnormal{row}}, \widehat\theta_j^{\textnormal{col}}$ in \eqref{eq:rowcol-concat-global-characters}. The sublist is nonempty, since otherwise $G_*=G$, which contradicts $G_{*}$ having step at most $s-1$. The common kernel of the sublist is $G_*$. We apply Lemma \ref{lem:character-factorisation} to $g$ on $(X,X+H]$ and this sublist, with character bound $R^{C_1}$. This gives a factorisation through $G_*$ with smoothness, rationality and induced complexity bounded by $M^{D^{O_{d,s}(1)}}$. Lemma \ref{lem:character-factorisation} gives a bound of the same shape for the period of the rational factor modulo $\Gamma$. Choosing $C=C(d,s)$ sufficiently large proves the lemma.
\end{proof}

We now prove the non-abelian Type II estimate.

\begin{proposition}[Efficient non-abelian Type II inverse theorem]
\label{cor:efficient-typeII-inverse}
Let $d,s$ be integers with $2\leq s\leq d$. There is $C_{d,s}\geq1$ with the following property. Let $D,X,H,A\in\N$ and $M\geq2$ satisfy $D\geq2$, $2\leq H\leq X$ and $1\leq A\leq X$, and let $0<\delta\leq1/10$. Let $G/\Gamma$ be an $s$-step filtered nilmanifold of degree at most $d$, dimension at most $D$ and complexity at most $M$, with $G^{(s)}$ one-dimensional. Let $g\in\poly(\Z,G)$, and let $F\colon G/\Gamma\to\C$ be a $1$-bounded $M$-Lipschitz vertical nilcharacter with nonzero frequency $\xi$ satisfying $|\xi|\leq M/\delta$. Suppose that $f=\alpha*\beta$ is a $(\delta,A,2A)$ Type II sum, with $\alpha$ supported on $(A,2A]$, and
\begin{equation}
 \label{eq:rowcol-bilinear-correlation}
 \left|\sum_{X<n\leq X+H}f(n)F(g(n)\Gamma)\right|^*\geq\delta H.
\end{equation}
Set $Q=(M/\delta)^{D^{C_{d,s}}}$. Then either
\begin{equation}
 \label{eq:rowcol-bilinear-short}
 H\leq Q\max\{A,X/A\},
\end{equation}
or $g$ has a $Q$-factorisation on $(X,X+H]$ through a subgroup $G_*\leq G$ of step at most $s-1$. In the latter case, $G_*/(G_*\cap\Gamma)$, with the induced filtration, has complexity at most $Q$ in an induced rational basis, and the rational factor has period at most $Q$ modulo $\Gamma$.
\end{proposition}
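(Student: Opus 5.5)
We assume \eqref{eq:rowcol-bilinear-short} fails. The aim is to verify the hypotheses of Lemma~\ref{lem:rowcol-bilinear-concatenation} with $M$ replaced by $(M/\delta)^{D^{C'}}$; that lemma then supplies the factorisation. Throughout, $Q$ will absorb polynomial losses in $M/\delta$ raised to powers $D^{O(1)}$.

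\emph{Step 1: localisation.} Take the progression $P_0$ attaining the maximal sum and expand $f=\alpha*\beta$:
\[
\sum_{a}\alpha(a)\sum_{b}\beta(b)\1_{P_0}(ab)F(g(ab)\Gamma).
\]
Cover $(A,2A]$ by the intervals $I_{A'}$ from \eqref{eq:rowcol-cover-intervals1}. For $a\in I_{A'}$, the constraint $ab\in(X,X+H]$ forces $b\in J_{A'}$ by \eqref{eq:rowcol-cover-containment}.

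We then remove the cutoff $\1_{P_0}(ab)$. It is a progression condition in $ab$ together with the interval condition. I plan to handle it by splitting $a,b$ into residue classes modulo $\operatorname{mod}(P_0)$ and applying a standard Fourier expansion of the interval cutoff. This costs factors $(\log X/\delta)^{O(1)}$, or one can pass to subboxes of $I_{A'}\times J_{A'}$ of proportional size. Pigeonholing then gives at least $X/(QH)$ intervals $I_{A'}$ for which
\[
\Bigl|\sum_{a\in I_{A'}}\sum_{b\in J_{A'}}\alpha(a)\beta'(b)F(g(ab)\Gamma)\Bigr|\gg Q^{-1}|I_{A'}||J_{A'}|\cdot\frac{H}{X}\cdot\frac{X}{H}.
\]
The normalisation uses the moment bounds \eqref{eq:type-alpha-second-moment} and \eqref{eq:type-II-moments}, with the twist absorbed into $\beta'$.

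\emph{Step 2: Cauchy--Schwarz.} On each such interval, apply Cauchy--Schwarz in $a$ and then in $b$, using the $\ell^2$ bound for $\alpha$ and the fourth moment bound for $\beta$. This eliminates the coefficients and yields
\[
\Bigl|\E_{a,a'\in I_{A'}}\E_{b,b'\in J_{A'}} F(g(ab))\overline{F(g(a'b))F(g(ab'))}F(g(a'b'))\Bigr|\geq Q^{-1}.
\]
The integrand is $\widetilde F(\mathbf g(a,a',b,b')\Gamma^4)$, where $\widetilde F=F\otimes\overline F\otimes\overline F\otimes F$ is a vertical nilcharacter on $G^4$. Its frequency with respect to $(G^4)^{(s)}=(G^{(s)})^4$ is $\Xi=(\xi,-\xi,-\xi,\xi)$. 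The dimension is at most $4D$, the complexity is $M^{O(1)}$, and $\mathbf g\in\poly(\Z^4,G^4)$.

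\emph{Step 3: equidistribution.} Apply Theorem~\ref{thm:Leng-equidistribution} with $t=4$ on the box $I_{A'}^2\times J_{A'}^2$. Alternative (i) would give $\min(|I_{A'}|,|J_{A'}|)\leq Q$, that is $AH/X\leq Q$ or $H/A\leq Q$. Both contradict the failure of \eqref{eq:rowcol-bilinear-short}.

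So alternative (ii) holds, and Leng's theorem supplies characters $\lambda_1,\ldots,\lambda_t$ on $G^4$ with $t\leq4D$ and the following properties:
\begin{itemize}
\item the height bounds required in \eqref{eq:rowcol-concat-local-characters} hold;
\item the $C^\infty$ bounds required in \eqref{eq:rowcol-concat-local-characters} hold;
\item their common kernel satisfies \eqref{eq:rowcol-concat-local-bracket}, with $\Xi$ in place of $\xi$.
\end{itemize}
These characters come directly from the proof of Theorem~\ref{thm:Leng-equidistribution}, before passing to a factorisation.

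Note that $(G^4)^{(s)}$ is four-dimensional, so we cannot use the final step-drop clause of that theorem. We only need the bracket condition, and this is exactly what Lemma~\ref{lem:rowcol-bilinear-concatenation} consumes. Since $H>Q\max\{A,X/A\}$, the lemma's length hypothesis holds with its $M$ taken as $Q^{1/D^{C}}$-type. The lemma then yields the claimed factorisation, complexity bound and period bound.

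\emph{Main obstacle.} The main obstacle is Step 1: removing $\1_{P_0}(ab)$ and the coupled range $ab\in(X,X+H]$ so that we obtain a genuine box correlation on $I_{A'}^2\times J_{A'}^2$. The losses must stay polynomial in $M/\delta$, and this requires a positive proportion of the intervals $I_{A'}$ to contribute.

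I would first try Fourier-expanding the smooth cutoff in $\log(ab)$. This expansion factorises as $a^{it}b^{it}$, so it can be absorbed into the coefficients with $|t|\ll X/(H\delta)$ and $O(\delta^{-2})$ terms. I would then pigeonhole in $A'$ using the $\ell^2$ bounds.
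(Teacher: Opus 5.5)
Your Steps 2 and 3 coincide with the paper. You apply Cauchy--Schwarz down to the corner average of $\mathbf F(\mathbf g(a,a',b,b')\Gamma^4)$ with frequency $\Xi=(\xi,-\xi,-\xi,\xi)$. You then use Leng's theorem on each good box $I_{A'}^2\times J_{A'}^2$ only for its characters and the $\Xi$-bracket condition, and finish with Lemma~\ref{lem:rowcol-bilinear-concatenation}. One correction there: $(a,b)\mapsto g(ab)$ is polynomial only for the doubled filtration $G^{[2]}_r=G_{\lceil r/2\rceil}$, so Leng's theorem must be applied in degree $2d$.

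The gap is Step 1, which as written does not give box correlations with losses polynomial in $M/\delta$. There are three problems.
\begin{enumerate}
\item Splitting $a,b$ into classes modulo $\operatorname{mod}(P_0)$ costs a power of $\operatorname{mod}(P_0)$, and nothing in your sketch bounds it. Since $f=\alpha*\beta$ is unbounded, $|\sum_{n\in P_0}f(n)F(g(n)\Gamma)|\geq\delta H$ does not force $|P_0|\gg\delta^{O(1)}H$.
\item Cauchy--Schwarz on one box needs local bounds $\sum_{a\in I_{A'}}|\alpha(a)|^2\ll Q|I_{A'}|$ and $\sum_{b\in J_{A'}}|\beta(b)|^2\ll Q|J_{A'}|$. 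The hypotheses \eqref{eq:type-alpha-second-moment} and \eqref{eq:type-II-moments} are cumulative, and allow all the mass of $\alpha$ to sit in a single interval of length $\asymp AH/X$. You would have to show that locally heavy intervals carry little of the correlation and discard them; your sketch does not do this.
\item Losses involving $\log X$ cannot be absorbed, since $Q=(M/\delta)^{D^{C_{d,s}}}$ does not depend on $X$.
\end{enumerate}
Expanding the cutoff in $\log(ab)$ as $a^{it}b^{it}$ can handle the interval part of the constraint, but it addresses neither of the first two points.

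The paper avoids Fourier expansion entirely.
\begin{itemize}
\item It truncates $\beta$ to $\beta_{\leq}=\beta\1_{|\beta|\leq\delta^{-4}}$, controlling the tail by the fourth moment.
\item It keeps the set $\mathcal A$ of $a$ whose truncated row sum $S_{a,\leq}=\sum_{b\colon ab\in P_0}\beta_{\leq}(b)F(g(ab)\Gamma)$ satisfies $|S_{a,\leq}|\geq\delta^4\cdot 2H/A$. The global $\ell^2$ bound gives $|\mathcal A|\geq\rho'A$ with $\rho'=\delta^{14}$, and after this $\alpha$ plays no role.
\item Since $\beta_{\leq}$ is bounded, for each such $a$ a subprogression $R_a\subset(X/a,(X+H)/a]\subset J_{A'}$ attaining the row's maximal sum has length $\gg(\rho')^2H/A$, hence modulus $O((\rho')^{-2})$. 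Taking this per-row maximum is what absorbs the congruence from $P_0$.
\item Rounding the endpoints of $R_a$ to a grid of spacing $\asymp(\rho')^2H/A$ and pigeonholing over the $O((\rho')^{-8})$ possibilities gives one progression $R$ shared by $\gg(\rho')^9|I_{A'}|$ values of $a$ in each good interval.
\end{itemize}
Then $c(b)=\beta_{\leq}(b)\1_R(b)$ is independent of $a$. Positivity of $|\E_{b\in J_{A'}}c(b)F(g(ab)\Gamma)|^2$ lets you average over all $a\in I_{A'}$, and $\E_{b\in J_{A'}}|c(b)|^2\leq(\rho')^{-2}$ gives the local $\ell^2$ control for free. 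A further Cauchy--Schwarz in $(b,b')$ yields the corner bound $\gg(\rho')^{26}$.
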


\begin{proof} \emph{Reduction to single-variable sums.}
We assume that \eqref{eq:rowcol-bilinear-short} fails, so $AH/X>Q$ and $H/A>Q$. Choose a progression $P\subset(X,X+H]$ attaining \eqref{eq:rowcol-bilinear-correlation}, so
\begin{equation}
 \label{eq:rowcol-selected-bilinear}
 \left|\sum_{A<a\leq2A}\alpha(a)S_a\right|\geq\delta H,\qquad
 S_a=\sum_{b:\,ab\in P}\beta(b)F(g(ab)\Gamma).
\end{equation}
For each $a$, the inner range is a progression in $(X/a,(X+H)/a]$.

We truncate $\beta$ at $\delta^{-4}$, writing $\beta_{\leq}=\beta\1_{|\beta|\leq\delta^{-4}}$ and $\beta_>=\beta-\beta_{\leq}$. Each $S_a$ has at most $2H/A$ terms, and each $b$ in the definition of $S_a$ lies in  $(X/(2A),2X/A]$ and appears for at most $3AH/X$ values of $a$. Since $|\beta_>(b)|^2\leq\delta^8|\beta(b)|^4$, Cauchy--Schwarz and \eqref{eq:type-II-moments} give
\begin{equation}
 \label{eq:rowcol-tail-second-moment}
 \begin{aligned}
 \sum_{A<a\leq2A}\left|\sum_{b:\,ab\in P}\beta_>(b)F(g(ab)\Gamma)\right|^2
&\leq \delta^8\cdot (2H/A)\cdot (3AH/X)\sum_{b\leq2X/A}|\beta(b)|^4\\
 &\leq12\delta^6H^2/A.
 \end{aligned}
\end{equation}
Also, \eqref{eq:type-alpha-second-moment} gives $\sum_{A<a\leq2A}|\alpha(a)|^2\leq2\delta^{-1}A$. We set
\begin{align*}
 S_{a,\leq}=\sum_{b:\,ab\in P}\beta_{\leq}(b)F(g(ab)\Gamma).
\end{align*}
Recalling \eqref{eq:rowcol-selected-bilinear}, a second application of Cauchy--Schwarz yields
\begin{equation}
 \label{eq:rowcol-truncated-bilinear}
 \left|\sum_{A<a\leq2A}\alpha(a)S_{a,\leq}\right|
 \geq\delta H-\sqrt{24/\delta}\,\delta^3H\geq\delta H/2,
\end{equation}
where we used $\delta\leq1/10$.

Let $\mathcal A$ be the set of $a\in(A,2A]$ for which $|S_{a,\leq}|\geq\delta^4\cdot (2H/A)$. The values $a\notin\mathcal A$ contribute at most
\begin{align*}
\delta^4(2H/A)\sum_{A<a\leq2A}|\alpha(a)|\leq2\sqrt{2/\delta}\,\delta^4H\leq\delta H/4.
\end{align*}
Since $|S_{a,\leq}|\leq\delta^{-4}(2H/A)$, Cauchy--Schwarz applied to $|\sum_{a\in \mathcal A}\alpha(a)S_{a,\leq}|$ gives
\begin{equation}
 \label{eq:rowcol-good-row-count}
 \frac{\delta H}{4}\leq\delta^{-4}(2H/A)\sqrt{\frac{2A}{\delta}|\mathcal A|},\qquad
 |\mathcal A|\geq\frac{\delta^{11}}{128}A\geq\delta^{14}A.
\end{equation}
Denoting $\rho'=\delta^{14}$, we conclude that at least $\rho'A$ integers $a\in(A,2A]$ satisfy
\begin{equation}
 \label{eq:rowcol-many-good-rows}
\left|\sum_{X/a<b\leq(X+H)/a}\beta_{\leq}(b)F(g(ab)\Gamma)\right|^*
 \geq\rho' H/A.
\end{equation}

\emph{Removing the coefficient.} We use the cover of $(A,2A]$ by intervals $I_{A'}$ from \eqref{eq:rowcol-cover-intervals1},~\eqref{eq:rowcol-cover-intervals2}. For a small absolute constant $c>0$, the intervals $I_{A'}$ containing fewer than $c\rho'AH/X$ elements of $\mathcal A$ contain at most $\rho'A/2$ such elements in total. Since each interval has $O(AH/X)$ elements, at least $\gg\rho'X/H$ intervals $I_{A'}$ contain at least $c\rho'AH/X$ elements of $\mathcal A$. We call these intervals good.

We fix a good interval and write $I=I_{A'}$, $J=J_{A'}$ and $z_a(b)=F(g(ab)\Gamma)$. For each $a\in I\cap\mathcal A$, we choose a progression $R_a\subset(X/a,(X+H)/a]\subset J$ on which the sum in \eqref{eq:rowcol-many-good-rows} has absolute value at least $\rho'H/A$. Since $|\beta_{\leq}(b)z_a(b)|\leq\delta^{-4}\leq(\rho')^{-1}$, we have $|R_a|\geq(\rho')^2H/A$. Let $q_a$ be the common difference of $R_a$. The bound $|J|\ll H/A$ gives $q_a\ll(\rho')^{-2}$.

We increase $C_{d,s}$ so that $(\rho')^2H/A\geq16$ and set $\ell=\lfloor(\rho')^2H/(8A)\rfloor$. We round the endpoints of the integer interval determined by each $R_a$ inwards to the grid $(\min J)+\ell\Z$. This removes at most $2\ell$ terms and changes the sum by at most $2\ell(\rho')^{-1}\leq\rho'H/(4A)$. Let $r_a\bmod q_a$ be the residue class containing $R_a$. There are $O((\rho')^{-4})$ choices for $(q_a,r_a\bmod q_a)$ and $O((\rho')^{-4})$ pairs of grid endpoints. By the pigeonhole principle, some rounded progression $R\subset J$ satisfies
\begin{align*}
 \left|\sum_{b\in R}\beta_{\leq}(b)z_a(b)\right|\geq\rho'H/(2A)
\end{align*}
for $\gg(\rho')^9|I|$ values of $a\in I$.

We set $c(b)=\beta_{\leq}(b)\1_R(b)$. Since $|I|\asymp AH/X$ and $|J|\asymp H/A$, we have
\begin{align*}
 (\rho')^{11}&\ll\E_{a\in I}\left|\E_{b\in J}c(b)z_a(b)\right|^2.
\end{align*}
We expand the square and apply Cauchy--Schwarz in $(b,b')$ to obtain
\begin{align*}
 \left(\E_{a\in I}\left|\E_{b\in J}c(b)z_a(b)\right|^2\right)^2
 &\leq\left(\E_{b\in J}|c(b)|^2\right)^2\E_{b,b'\in J}\left|\E_{a\in I}z_a(b)\overline{z_a(b')}\right|^2\\
 &=\left(\E_{b\in J}|c(b)|^2\right)^2\E_{a,a'\in I}\left|\E_{b\in J}z_a(b)\overline{z_{a'}(b)}\right|^2.
\end{align*}
Consequently, using $\E_{b\in J}|c(b)|^2\leq(\rho')^{-2}$, every good interval satisfies
\begin{equation}
 \label{eq:rowcol-Leng-correlation}
 \E_{a,a'\in I_{A'}}\left|\E_{b\in J_{A'}}F(g(ab)\Gamma)\overline{F(g(a'b)\Gamma)}\right|^2\gg(\rho')^{26}.
\end{equation}

\emph{Horizontal characters and factorisation.}
Let $\mathbf g$ be the polynomial sequence in \eqref{eq:rowcol-fourmap}. Define
\begin{equation}
 \label{eq:rowcol-corner-product}
 \mathbf F(x_1,x_2,x_3,x_4)=F(x_1)\overline{F(x_2)F(x_3)}F(x_4).
\end{equation}
Note that the function $\mathbf F$ is a $1$-bounded vertical nilcharacter on $G^4/\Gamma^4$ with nonzero frequency $\Xi=(\xi,-\xi,-\xi,\xi)$. Expanding \eqref{eq:rowcol-Leng-correlation} gives
\begin{equation}
 \label{eq:rowcol-product-correlation}
 \E_{a,a'\in I_{A'}}\E_{b,b'\in J_{A'}}\mathbf F(\mathbf g(a,a',b,b')\Gamma^4)\gg(\rho')^{26}.
\end{equation}

We equip $G^4$ with the doubled product filtration $((G_{\lceil j/2\rceil})^4)_{j\geq0}$ and write the Taylor expansion
\begin{align*}
 g(n)=\prod_{i=0}^d g_i^{\binom ni},\qquad g_i\in G_i.
\end{align*}
For each $i$, the function $(a,b)\mapsto\binom{ab}{i}$ is an integer-valued polynomial of total degree at most $2i$. Thus the map $(a,b)\mapsto g_i^{\binom{ab}{i}}$ is polynomial for the doubled filtration
\begin{align*}
 G^{[2]}_r=G_{\lceil r/2\rceil},
\end{align*}
since a derivative of total order $r$ of this map can be nontrivial only when $r\leq2i$, in which case
\begin{align*}
 G_i\subseteq G_{\lceil r/2\rceil}=G^{[2]}_r.
\end{align*}
Polynomial maps for a filtration are closed under pointwise multiplication, so $(a,b)\mapsto g(ab)$ is polynomial for $G^{[2]}_\bullet$. The same argument applies to $g(a'b)$, $g(ab')$ and $g(a'b')$. Therefore
\begin{align*}
 \mathbf g(a,a',b,b')=(g(ab),g(a'b),g(ab'),g(a'b'))
\end{align*}
is polynomial of degree at most $2d$ for the doubled product filtration.

Note that the product nilmanifold has step $s$, dimension at most $4D$ and complexity at most $M^{D^{O_d(1)}}$. The product basis and \cite[Lemma B.3]{Leng} give $\|\mathbf F\|_{\Lip}\leq M^{D^{O_d(1)}}$, while $$|\Xi|\leq |\xi|+|\xi|+|\xi|+|\xi|\leq4M/\rho'.$$

We apply the quantitative equidistribution result of \cite[Theorem 4]{Leng} on each good rectangle $I_{A'}^2\times J_{A'}^2$, after translating its intervals to start at $1$, with correlation parameter $c(\rho')^{26}$ for a sufficiently small absolute $c>0$. The side lengths are $\gg H/\max\{A,X/A\}$, so the assumption $H>Q\max\{A,X/A\}$ excludes the short length alternative of that theorem after increasing $C_{d,s}$. The other alternative gives \eqref{eq:rowcol-concat-local-characters} and \eqref{eq:rowcol-concat-local-bracket} with $(M/\rho')^{D^{C_1}}$ in place of $M$ for some $C_1=C_1(d,s)$, using \eqref{eq:Cinfty-norm-comparison}. There are $\gg\rho' X/H\geq X/((M/\rho')^{D^{C_1}}H)$ good intervals, after increasing $C_1$. If $C_2=C_2(d,s)$ is the constant in Lemma \ref{lem:rowcol-bilinear-concatenation}, we take $C_{d,s}$ sufficiently large that $((M/\rho')^{D^{C_1}})^{D^{C_2}}\leq Q$. That lemma then applies with parameter $(M/\rho')^{D^{C_1}}$ and gives the desired factorisation and all its bounds.
\end{proof}

\section{Model functions and decomposition in short intervals}
\label{sec:short-arithmetic-models}

In this section, we first approximate the Cram\'er and Siegel models by Type I sums and compare their short interval Gowers norms; this is done in Subsection~\ref{subsec:local}. We then decompose in Lemma~\ref{lem:growing-decomposition} the functions $\Lambda-\Lambda_w$ and $\Lambda-\widetilde\Lambda_w$ into Type I, Type $I_2$, and Type II sums, with an an estimate twisted by $n^{iT}$ for the Type II components of high frequency, with quasipolynomial bounds. Finally, in Lemma~\ref{lem:abelian-typeII-reduction} we reduce abelian type II sums to the sums twisted by $n^{iT}$ and in Proposition~\ref{prop:short-major-arcs} we handle sums of the von Mangoldt function minus the Siegel correction over arithmetic progressions.

\subsection{Models functions over short intervals}\label{subsec:local}

The following lemma shows that the Cram\'er model and its twists are close to Type I sums in $\ell^1$ norm. This has been proved in many settings (see e.g.~\cite[Lemma 2.19]{MRSTT}), but we have been unable to find a reference that fully captures both the short interval and error term aspects below. 

\begin{lemma}[Type I truncation of the Cram\'er model and its exceptional twist]
\label{lem:local-model-typeI}
There is some $X_0\geq3$ such that the following holds. Let $X\geq X_0$, let $2\leq w\leq\exp((\log X)^{1/10})$, and let $I\subset[X,3X]\cap\Z$ be an interval of cardinality $H\geq X^{1/2}$. Then there is a sequence $a(\cdot)$, depending only on $X,w$ and supported on $[1,X^{1/3}]$, such that, for every $Y\geq1$,
\begin{align*}
 \sum_{d\leq Y}|a(d)|^2\ll Y(\log(2w))^2
\end{align*}
and
\begin{equation}
 \label{eq:local-model-L1}
 \sum_{n\in I}|\Lambda_w(n)-(a*1)(n)|\leq CH\exp(-(\log X)^{4/5}).
\end{equation}
If the level $w$ exceptional pair $(\beta,\chi)$ exists and has exceptional conductor $q$, then there exist a sequence $a_{\chi}(\cdot)$ supported on $[1,X^{1/3}]$ and a sequence $b_{\chi}(\cdot)$, such that for every $Y\geq 1$,
\begin{equation}
 \label{eq:twisted-model-coefficient-bounds}
 \|b_\chi\|_{\TV(\N;q)}\leq 2q,\qquad \sum_{d\leq Y}|a_\chi(d)|^2\ll Y(\log(2w))^2,
\end{equation}
we have
\begin{align}\label{eq:twisted-approximation}
 \sum_{n\in I}|\Lambda_w(n)\chi(n)n^{\beta-1}-(a_\chi*b_\chi)(n)|\leq H\exp(-c(\log X)^{4/5}).
\end{align}
\end{lemma}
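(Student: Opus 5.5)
The plan is to use the standard fundamental-lemma-of-sieve construction. By Möbius inversion we have $\1_{(n,P(w))=1}=\sum_{d\mid (n,P(w))}\mu(d)$. Truncating this sum gives a Type I approximation, and the cost is controlled by smooth numbers. Concretely, set $D=X^{1/3}$ and let $\lambda_d$ be the $\beta$-sieve (Rosser) weights of level $D$ for the primes up to $w$. Alternatively, use the cruder truncation
\[
a(d)=\frac{P(w)}{\varphi(P(w))}\mu(d)\1_{d\mid P(w)}\1_{d\leq D}.
\]
For this truncation, the discrepancy $\Lambda_w(n)-(a*1)(n)$ is nonzero only when $n$ has a divisor $d\mid P(w)$ with $d>D$. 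This forces $n$ to have a $w$-smooth part exceeding $D$. I will prefer the Rosser weights. These are supported on squarefree $d\mid P(w)$ with $d\leq D$ and satisfy $|\lambda_d|\leq 1$. Since $P(w)/\varphi(P(w))\ll\log(2w)$, the second moment bound $\sum_{d\leq Y}|a(d)|^2\ll Y(\log 2w)^2$ follows immediately.

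For the $\ell^1$ bound~\eqref{eq:local-model-L1}, the upper and lower bound sieves $\lambda^\pm$ sandwich the indicator: $(\lambda^-*1)\leq\1_{(n,P(w))=1}\leq(\lambda^+*1)$. Hence
\[
\sum_{n\in I}|\1_{(n,P(w))=1}-(\lambda^-*1)(n)|\leq\sum_{n\in I}\bigl((\lambda^+-\lambda^-)*1\bigr)(n).
\]
We evaluate the right-hand side by swapping the sums. This gives $H\sum_d(\lambda_d^+-\lambda_d^-)/d$ plus a remainder of size $O(D)=O(X^{1/3})$. The remainder is $\leq H X^{-1/6}$ because $H\geq X^{1/2}$. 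The fundamental lemma bounds the main term by $\ll u^{-u}\prod_{p\leq w}(1-1/p)$ with $u=\log D/\log w\gg(\log X)^{9/10}$. Multiplying by $P(w)/\varphi(P(w))$, the total error is $\ll H\exp(-u\log u)$. Since $u\log u\geq(\log X)^{9/10}\geq(\log X)^{4/5}$ for large $X$, this gives~\eqref{eq:local-model-L1} with $a=\frac{P(w)}{\varphi(P(w))}\lambda^-$.

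For the twisted statement, we take $a_\chi(d)=a(d)\chi(d)d^{\beta-1}$ and $b_\chi(m)=\chi(m)m^{\beta-1}$. Complete multiplicativity of $\chi(\cdot)(\cdot)^{\beta-1}$ gives
\[
(a_\chi*b_\chi)(n)=\chi(n)n^{\beta-1}(a*1)(n).
\]
Therefore the pointwise difference is $|\chi(n)n^{\beta-1}|\cdot|\Lambda_w(n)-(a*1)(n)|\leq|\Lambda_w(n)-(a*1)(n)|$, and~\eqref{eq:twisted-approximation} follows from~\eqref{eq:local-model-L1}. The second moment bound for $a_\chi$ holds because $|d^{\beta-1}|\leq1$.

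For the total variation bound, fix a residue class $c\bmod q$. There $\chi$ is constant, so $b_\chi$ equals $\chi(c)m^{\beta-1}$, which is monotone with sup at most $1$. Its total variation on that class is therefore at most $2$. Summing over the $q$ classes gives $\|b_\chi\|_{\TV(\N;q)}\leq 2q$.

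I expect the main (if modest) obstacle to be getting the fundamental lemma with uniform constants and the right level. The sieve dimension is $1$ and we need $u\to\infty$, so the standard form of the fundamental lemma suffices. The remainder term $\sum_{d\leq D}|\lambda_d|\leq D$ is handled by the hypothesis $H\geq X^{1/2}$. The choice of a fixed $D=X^{1/3}$ ensures that $a$ depends only on $X,w$ and not on $I$.
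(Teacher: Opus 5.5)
Your proposal is correct and follows the paper's proof essentially verbatim. Both use lower-bound linear (Rosser) sieve weights of level $X^{1/3}$ scaled by $P(w)/\varphi(P(w))$, and both sum the $\lambda^\pm$ sandwich over $I$ using the fundamental lemma and an $O(X^{1/3})$ remainder. Both also handle the twist $a_\chi=a\,b_\chi$, $b_\chi(n)=\chi(n)n^{\beta-1}$, by complete multiplicativity and by monotonicity on residue classes modulo $q$.

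The only cosmetic difference is that you keep the factor $\prod_{p\leq w}(1-1/p)$ from the fundamental lemma, which cancels $P(w)/\varphi(P(w))$ and avoids the harmless $\log(2w)$ loss in the paper's bound. Your $O(X^{1/3})$ remainder should still carry the factor $P(w)/\varphi(P(w))\ll\log(2w)$, but this is equally harmless.
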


\begin{proof}
Let $D_0=\lfloor X^{1/3}\rfloor$, and let $\lambda_d^-$ and $\lambda_d^+$ be the lower and upper bound linear sieve weights of level $D_0$. We set
\begin{align*}
 a(d)=\frac{P(w)}{\varphi(P(w))}\lambda_d^-.
\end{align*}
Then $\textnormal{supp}(a)\subset [1,D_0]$ and
\begin{equation}
 \label{eq:local-sieve-sandwich}
 0\leq\Lambda_w(n)-(a*1)(n)\leq\frac{P(w)}{\varphi(P(w))}\sum_{d\mid n}(\lambda_d^+-\lambda_d^-).
\end{equation}
Summing \eqref{eq:local-sieve-sandwich} over $I$, using the fact that $\sum_{n\in I}\1_{d\mid n}=H/d+O(1)$ and applying the fundamental lemma of sieve theory \cite[Lemma 6.3]{IK} and the Mertens bound
\begin{align}\label{eq:Mertens}
\frac{P(w)}{\varphi(P(w))}\ll \log w,    
\end{align}
we obtain, for some constant $c>0$,
\begin{align*}
 \sum_{n\in I}|\Lambda_w(n)-(a*1)(n)|
 &\ll H(\log(2w))\exp(-\log D_0/\log w)+D_0\log(2w)\\
 &\ll H\exp(-(\log X)^{9/10}/10),
\end{align*}
which implies the claimed bound.
Also,
\begin{align*}
 \sum_{d\leq Y}|a(d)|^2\leq Y\left(\frac{P(w)}{\varphi(P(w))}\right)^2\ll Y(\log(2w))^2.
\end{align*}

For the exceptional case, define
\begin{equation}
 \label{eq:twisted-long-factor}
 b_\chi(n)=\chi(n)n^{\beta-1},\qquad a_\chi(d)=a(d)b_\chi(d).
\end{equation}
The function $b_\chi$ is completely multiplicative and satisfies $|b_\chi|\leq1$. Thus
\begin{align*}
 (a_\chi*b_\chi)(n)=\sum_{d\mid n}a(d)b_\chi(d)b_\chi(n/d)=b_\chi(n)(a*1)(n),\qquad |a_\chi(d)|\leq|a(d)|,
\end{align*}
which proves~\eqref{eq:twisted-approximation} and the second moment bound. On each residue class modulo $q$, $b_\chi$ is monotone with absolute value at most one. Summing the variation bounds over the residue classes, we obtain
\begin{align*}
 \|b_\chi\|_{\TV(\N;q)}\leq\sum_{a\bmod q}2=2q.
\end{align*}
\end{proof}

We next establish a mixed model asymptotic for Cram\'er models of different levels that we will use in Lemmas \ref{lem:local-exceptional} and \ref{lem:local-model-comparison}. This is a variant of~\cite[Proposition 5.2]{TT}.

\begin{lemma}[Mixed model asymptotic]
\label{lem:mixed-cutoff-cube}
Let $k\geq2$ be an integer and let $\eps>0$. Let
$I=x+[H]\subset[X,3X]\cap\Z$, where $x,H\in\Z$ and
\[
 X^{1/2+\eps}\leq H\leq X^{1-\eps}.
\]
For $\omega\in\{0,1\}^k$, write
\[
 \psi_\omega(n,\mathbf h)=n+\omega\cdot\mathbf h,
\]
and let $\mathcal C_I=\{(n,\mathbf h)\in\Z^{k+1}\colon \psi_\omega(n,\mathbf h)\in I\,\, \textnormal{for all}\,\, \omega\in \{0,1\}^k\}$.

Let
\[
 2\leq w\leq z\leq\exp((\log X)^{1/10}),
\]
and let $\chi$ be a primitive real character of conductor $q\leq w$. There is a quantity $B_{\chi,w}\geq0$
such that
\[
 B_{\chi,w}\ll_k
 \left(\frac q{\varphi(q)}
 \|\chi\|_{U^k(\Z/q\Z)}\right)^{2^k}
\]
and, uniformly for every choice $v_\omega\in\{w,z\}$,
\begin{align}
 \label{eq:mixed-cutoff-cube}
 \E_{(n,\mathbf h)\in\mathcal C_I}
 \prod_{\omega\in \{0,1\}^k}
 \chi(\psi_\omega(n,\mathbf h))
 \Lambda_{v_\omega}(\psi_\omega(n,\mathbf h))
 =B_{\chi,w}
 +O_{k,\eps}\left(
 \frac{B_{\chi,w}}{w}
 +\exp(-c(\log X)^{4/5})\right)
\end{align}
for some $c=c(k,\eps)>0$. If $v_\omega=w$ for every $\omega$, the
term $B_{\chi,w}/w$ may be omitted.
\end{lemma}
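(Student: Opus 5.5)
We will expand each factor $\Lambda_{v_\omega}$ via inclusion–exclusion over squarefree divisors of $P(v_\omega)$, reduce the cube average to a local (singular series) computation, and then compare the levels $w$ and $z$ prime by prime. The first step is to write $\Lambda_{v}(n)=\frac{P(v)}{\varphi(P(v))}\sum_{d\mid (n,P(v))}\mu(d)$. We truncate this with the fundamental-lemma sieve weights of level $D_1=X^{\eps/(2^{k+2})}$, as in Lemma~\ref{lem:local-model-typeI}. This replaces each factor by $(a_v*1)$ with an $\ell^1$ error of $H\exp(-c(\log X)^{4/5})$ on $I$. Since $v\leq\exp((\log X)^{1/10})$, the fundamental lemma gives the saving $\exp(-\log D_1/\log v)$, which is more than enough.

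To pass the $\ell^1$ error through the cube product, we will use that the other factors are bounded by $(\log w)^{O(1)}$ pointwise after the sieve majorant. Since $H\geq X^{1/2+\eps}$ and $\mathbf h$ ranges over $\asymp H^k$ values, each $n$ lies in $\asymp H^k$ cubes, so an $\ell^1$ error per vertex becomes a relative error of the same size in the average over $\mathcal C_I$. The point that needs care is that the sieve upper-bound weights must be used for the other factors, via a Cauchy–Schwarz or majorant argument, since $\Lambda_v$ itself is bounded by $\log v$ and so causes no problem there.

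After truncation, the main term becomes
\[
\sum_{(d_\omega)}\prod_\omega a_{v_\omega}(d_\omega)\,\E_{(n,\mathbf h)\in\mathcal C_I}\prod_\omega \chi(\psi_\omega)\1_{d_\omega\mid\psi_\omega}.
\]
The combined condition is periodic modulo $q\cdot\lcm(d_\omega)\leq qD_1^{2^k}\leq X^{\eps/2}$, which is far smaller than $H$. Counting over the convex region $\mathcal C_I$ then gives the local density with error $O(X^{\eps/2}/H)$, which is summable. Reassembling the sum, removing the truncation again by the fundamental lemma, and using the Chinese remainder theorem, the local factor splits as
\[
\Bigl(\E_{(n,\mathbf h)\bmod q}\prod_\omega\chi(\psi_\omega)\Bigr)\cdot\prod_{p}\beta_p,
\]
where $\beta_p$ is a coprimality density at $p$. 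For $p\mid q$ the coprimality to $p$ is automatic whenever $\chi\neq0$, and these primes contribute the factor $(q/\varphi(q))^{2^k}$. The $\chi$-average is $\|\chi\|_{U^k(\Z/q\Z)}^{2^k}$, which is nonnegative by the definition of the Gowers norm and gives the stated bound on $B_{\chi,w}$. For $p\nmid q$ with $p\leq w$, every $v_\omega\geq w$, so all factors impose the condition $p\nmid\psi_\omega$. The density at such $p$ is exactly $(p/(p-1))^{2^k}\cdot\Pr(p\nmid\psi_\omega\ \forall\omega)$, which is $1$ because the forms $\psi_\omega$ are affinely independent modulo $p$ in the relevant sense; this is the standard computation showing that the Cramér model has singular series $1$ on cubes. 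We therefore define $B_{\chi,w}$ as the product of the $\chi$-average and the factors at $p\mid q$.

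It remains to treat the primes $w<p\leq z$, which the main obstacle will be. Only the vertices with $v_\omega=z$ impose conditions at such $p$. The local factor there is $(p/(p-1))^{|S|}\Pr(p\nmid\psi_\omega\ \forall\omega\in S)$, where $S$ is the set of these vertices. We need this to equal $1+O(p^{-2})$ uniformly in $S$, by an exact computation for linear forms of Cauchy–Schwarz complexity. For any set $S$ of vertices of the cube, the events $p\mid\psi_\omega$ are pairwise independent, as any two distinct forms are linearly independent mod $p$, and inclusion–exclusion yields $1+O_k(p^{-2})$. The product over $p>w$ is then $1+O_k(1/w)$, giving the error $B_{\chi,w}/w$. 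When all $v_\omega=w$ there are no such primes, so this error term disappears.
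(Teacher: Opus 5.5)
Apart from one step, your route is the paper's. You reduce to local densities, extract $(q/\varphi(q))^{2^k}\|\chi\|_{U^k(\Z/q\Z)}^{2^k}$ from the residue classes mod $q$, and show that the factors at $w<p\le z$ are $1+O_k(p^{-2})$.

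The step that fails is your claim that for $p\le w$, $p\nmid q$, the full-cube factor $\beta_p(\{0,1\}^k)=(p/(p-1))^{2^k}\Pr(p\nmid\psi_\omega\ \forall\omega)$ equals $1$. The $2^k$ forms $\psi_\omega$ live in only $k+1$ variables. So for $k\ge2$ the events $p\mid\psi_\omega$ are pairwise but not jointly independent, and the Cram\'er model does \emph{not} have singular series $1$ on cubes.
\begin{itemize}
\item At $p=2$, all $\psi_\omega$ are odd exactly when $n$ is odd and every $h_i$ is even. Hence $\beta_2(\{0,1\}^k)=2^{2^k}\cdot2^{-k-1}=2^{2^k-k-1}\ge2$.
\item For $k=2$, a direct count gives $\beta_p(\{0,1\}^2)=1+(p-1)^{-3}$ for every $p$.
\item In general $\beta_p(\{0,1\}^k)=\|\frac p{p-1}\1_{\Z/p\Z\setminus\{0\}}\|_{U^k(\Z/p\Z)}^{2^k}$. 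This is a Gowers norm of a nonconstant mean-one function, so it is strictly larger than $1$.
\end{itemize}
Consequently, with your definition of $B_{\chi,w}$, the left side of \eqref{eq:mixed-cutoff-cube} equals $B_{\chi,w}\prod_{p\le w,\,p\nmid q}\beta_p(\{0,1\}^k)\,(1+O_k(1/w))$, up to the exponentially small error. The extra product is a constant exceeding $1$, not $1+O(1/w)$. Already for trivial $\chi$ ($q=1$), where your $B_{\chi,w}=1$, your asymptotic is off by an amount $\gg1$.

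The repair is the paper's definition, which keeps these factors in the main term:
\[
 B_{\chi,w}=\Bigl(\frac q{\varphi(q)}\Bigr)^{2^k}\|\chi\|_{U^k(\Z/q\Z)}^{2^k}\prod_{p\le w,\ p\nmid q}\beta_p(\{0,1\}^k).
\]
This is nonnegative. Your pairwise-independence argument applies verbatim to $S=\{0,1\}^k$ and gives $\beta_p(\{0,1\}^k)=1+O_k(p^{-2})$, so the product is $\ll_k1$ and the required upper bound for $B_{\chi,w}$ survives. The product depends only on $w$, not on $z$ or on the choice of the $v_\omega$. The remaining factors at $w<p\le z$ then contribute exactly the $1+O_k(1/w)$ you computed, and nothing when all $v_\omega=w$. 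This is also why the quantity in the statement carries the subscript $w$.
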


\begin{proof}
Write
\begin{align*}
 \Omega_I=\{(y,\mathbf t)\in\R^{k+1}:
 y+\omega\cdot\mathbf t\in(x,x+H]
 \text{ for every }\omega\in\{0,1\}^k\}.
\end{align*}
We split the left-hand side of~\eqref{eq:mixed-cutoff-cube}, up to a small error of $O(X^{-1/2})$, say,  into residue classes as
\begin{align}
\E_{(a,\mathbf{b})\in (\Z/q\Z)^{k+1}}\E_{(m,\mathbf u)\in\mathbb{Z}^{k+1}\cap \frac{1}{q}(\Omega_I-(a,\mathbf{b}))}
 \prod_{\omega\in \{0,1\}^k}
 \chi(\psi_\omega(a,\mathbf b))
 \Lambda_{v_\omega}(q\psi_\omega(m,\mathbf u)+a+\omega\cdot \mathbf{b})\eqqcolon \Sigma.    
\end{align}
The set $\frac{1}{q}(\Omega_I-(a,\mathbf{b}))$ in the $(m,\mathbf{u})$ summation has volume
\begin{align*}
 q^{-k-1}\operatorname{vol}(\Omega_I)
\end{align*}
and diameter $O_k(H/q)$. Set
\begin{align*}
 N=\frac Hq\gg X^{1/2+\eps/2}.
\end{align*}
We may then apply the sieve calculation in \cite[proof of Proposition 5.2]{TT}, using the residue decomposition in
\cite[proof of Theorem 2.5, equation (5.15) and below]{TT}. Note that this calculation is uniform in the constant coefficients $a+\omega\cdot\mathbf b$. Now we obtain
\begin{align}\label{eq:chi-sum}
\begin{aligned}
\Sigma&=\left(\frac{q}{\varphi(q)}\right)^{2^k}\E_{(a,\mathbf b)\in(\Z/q\Z)^{k+1}}
 \prod_{\omega\in \{0,1\}^k}\chi(a+\omega\cdot\mathbf b)\prod_{\substack{p\leq w\\p\nmid q}}\beta_p(\{0,1\}^k)\prod_{\substack{w<p\leq z\\p\nmid q}}\beta_p(S_p)\\
 &\qquad +O(\exp(-c(\log X)^{4/5})),  
 \end{aligned}
\end{align}
where 
\begin{align*}
 \beta_p(S)=\E_{\mathbf r\in(\Z/p\Z)^{k+1}}
 \prod_{\omega\in S}
 \frac p{p-1}\1_{\psi_\omega(\mathbf r)\ne0\pmod p},\qquad S_p=\{\omega\in \{0,1\}^k\colon v_{\omega}\geq p\}.
\end{align*}

For distinct $\omega,\omega'$, the forms $\psi_\omega,\psi_{\omega'}$ are linearly independent modulo every
prime. Hence 
\begin{align*}
 \beta_p(S)
 =1+O_k(p^{-2})
\end{align*}
for all $S\subset \{0,1\}^k$. Hence,
\begin{align*}
\prod_{\substack{p\leq w\\p\nmid q}}\beta_p(\{0,1\}^k)\prod_{\substack{w<p\leq z\\p\nmid q}}\beta_p(S_p)\ll_k 1.    
\end{align*}
Now the claim follows from
\begin{align*}
 \E_{(a,\mathbf b)\in(\Z/q\Z)^{k+1}}
 \prod_\omega\chi(a+\omega\cdot\mathbf b)=\|\chi\|_{U^k(\Z/q\Z)}^{2^k}.
\end{align*}
\end{proof}

\begin{lemma}[Gowers norm bound for character-twisted models]
\label{lem:local-exceptional}
Let $k\geq2$ be an integer and let $\eps>0$. There are constants $c=c(k,\eps)>0$, $C=C(k,\eps)\geq1$, and $X_0=X_0(k,\eps)\geq3$ such that the following holds.

Let $X\geq X_0$ and let $I=x+[H]\subset[X,3X]\cap\Z$, where $x,H\in\Z$ and
\begin{align*}
 X^{1/2+\eps}\leq H\leq X^{1-\eps},\qquad 3\leq q\leq w\leq\exp((\log X)^{1/10}).
\end{align*}
Let $\chi$ be a primitive real Dirichlet character of conductor $q$, and let $1/2<\beta<1$. Then
\begin{equation}
 \label{eq:local-exceptional-bound}
 \|\Lambda_w\chi(\cdot)(\cdot)^{\beta-1}\|_{U^k(I)}\leq CX^{\beta-1}q^{-2^{-k-2}}+C\exp(-c(\log X)^{4/5}).
\end{equation}
\end{lemma}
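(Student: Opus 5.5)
We plan to compute the $2^k$-th power of the Gowers norm directly, expanding the cube sum and applying Lemma~\ref{lem:mixed-cutoff-cube} with all levels $v_\omega=w$, after first removing the smooth factor $n^{\beta-1}$. Write $f(n)=\Lambda_w(n)\chi(n)n^{\beta-1}$ on $I$. The normalised norm satisfies
\[
\|f\|_{U^k(I)}^{2^k}=\E_{(n,\mathbf h)\in\mathcal C_I}\prod_{\omega}\cC^{|\omega|}f(\psi_\omega(n,\mathbf h)),
\]
since $\|\1_I\|_{U^k(\Z)}^{2^k}=|\mathcal C_I|$. Here all functions are real, so the conjugations are immaterial.

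First we handle the weight $n^{\beta-1}$. On $I\subset[X,3X]$ with $H\leq X^{1-\eps}$, the function $n\mapsto n^{\beta-1}$ equals $x^{\beta-1}(1+O(H/X))$. Hence
\[
\prod_\omega\psi_\omega^{\beta-1}=x^{2^k(\beta-1)}\bigl(1+O_k(X^{-\eps})\bigr)
\]
uniformly on $\mathcal C_I$. Combining this with the crude bound $\E_{\mathcal C_I}\prod_\omega\Lambda_w(\psi_\omega)\ll_k 1$, which follows from Lemma~\ref{lem:mixed-cutoff-cube} with $\chi$ trivial (or directly from the same sieve computation), we obtain
\[
\|f\|_{U^k(I)}^{2^k}=x^{2^k(\beta-1)}\,\E_{\mathcal C_I}\prod_\omega\chi(\psi_\omega)\Lambda_w(\psi_\omega)+O_k\bigl(X^{2^k(\beta-1)-\eps}\bigr).
\]
An alternative is to use a positivity or partial-summation argument. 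Since $\beta<1$ the error is tiny, and in any case it is dominated by $X^{2^k(\beta-1)}q^{-1/4}$ once $X^{\eps}\geq q^{1/4}$, which holds because $q\leq w\leq \exp((\log X)^{1/10})$.

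Next we apply Lemma~\ref{lem:mixed-cutoff-cube} with all $v_\omega=w$. The main term is $B_{\chi,w}\ll_k\bigl(\tfrac{q}{\varphi(q)}\|\chi\|_{U^k(\Z/q\Z)}\bigr)^{2^k}$, and the error is $O(\exp(-c(\log X)^{4/5}))$. The key remaining input is a bound on the Gowers norm of a primitive real character on $\Z/q\Z$. We aim to show
\[
\|\chi\|_{U^k(\Z/q\Z)}\leq\|\chi\|_{U^2(\Z/q\Z)}\ll q^{-1/4}.
\]
The first inequality is the monotonicity of Gowers norms on finite groups. For the second, $\|\chi\|_{U^2}^4=\sum_\xi|\widehat\chi(\xi)|^4$. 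For primitive $\chi$, the Gauss sum gives $|\widehat\chi(\xi)|=q^{-1/2}$ when $(\xi,q)=1$ and $0$ otherwise. Hence $\|\chi\|_{U^2}^4=\varphi(q)q^{-2}\leq q^{-1}$.

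Combining the two steps, and using $q/\varphi(q)\ll\log\log q\ll_k q^{\eta}$ for any small $\eta$, we get
\[
B_{\chi,w}\ll_k q^{2^k\eta}q^{-2^{k-2}}.
\]
Taking $2^k$-th roots gives
\[
\|f\|_{U^k(I)}\ll X^{\beta-1}q^{-1/4+\eta}+\exp(-c'(\log X)^{4/5}).
\]
This is far stronger than the claimed $q^{-2^{-k-2}}$, so the $q/\varphi(q)$ loss is harmless. We also use $(a+b)^{1/2^k}\leq a^{1/2^k}+b^{1/2^k}$ and the fact that $x^{\beta-1}\asymp X^{\beta-1}$.

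The main obstacle I expect is the uniformity of Lemma~\ref{lem:mixed-cutoff-cube} in the twisted setting, namely that the residue-class decomposition correctly yields a main term involving $\chi$ only through $\|\chi\|_{U^k(\Z/q\Z)}^{2^k}$. Since we take this lemma as given, the only remaining care is the cheap replacement of $n^{\beta-1}$ by the constant $x^{\beta-1}$, which needs the a priori bound $\ll_k 1$ for the untwisted $\Lambda_w$ cube average. If the stated exponent is intended to absorb a weaker character bound, for instance one obtained via Cauchy--Schwarz, then the $U^2$ Gauss sum computation above gives ample room.
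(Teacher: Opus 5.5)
Your reduction is sound up to the last arithmetic input. Writing $\|f\|_{U^k(I)}^{2^k}$ as an average over $\mathcal C_I$, freezing $n^{\beta-1}$ at $x^{\beta-1}$, and applying Lemma~\ref{lem:mixed-cutoff-cube} with all $v_\omega=w$ correctly reduces everything to an upper bound for $\frac{q}{\varphi(q)}\|\chi\|_{U^k(\Z/q\Z)}$. (The paper freezes the weight at the level of norms, via the triangle inequality and $\|f\|_{U^k(I)}\leq\|f\|_\infty$; your multiplicative freezing inside the cube also works.)

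The gap is in that upper bound. The inequality $\|\chi\|_{U^k(\Z/q\Z)}\leq\|\chi\|_{U^2(\Z/q\Z)}$ is backwards. On a finite abelian group the Gowers norms are nondecreasing in $k$, so your Gauss sum computation $\|\chi\|_{U^2}^4=\varphi(q)q^{-2}$ only gives a \emph{lower} bound for $\|\chi\|_{U^k}$ when $k\geq3$. In fact the claim $\|\chi\|_{U^k}\ll q^{-1/4}$ is false for $k\geq3$. We have $\|\chi\|_{U^k}^{2^k}=\E_h\|\Delta_h\chi\|_{U^{k-1}}^{2^{k-1}}$ with $\Delta_h\chi(x)=\chi(x+h)\chi(x)$, and every term is nonnegative. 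The single term $h=0$, where $\Delta_0\chi=\1_{(\cdot,q)=1}$, already gives $\|\chi\|_{U^k}^{2^k}\geq1/(2q)$ for prime $q\geq2^k$, that is, $\|\chi\|_{U^k}\gg q^{-2^{-k}}$. So the exponent $2^{-k-2}$ in the statement is of the correct shape, and your remark about ``ample room'' is mistaken.

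What is missing is a genuine $U^k$ bound for a real primitive character; the paper imports this from \cite[proof of Lemma 5.6]{TT}. One way to supply it is as follows.
\begin{itemize}
\item Take an odd prime $p$ and the Legendre symbol $\chi_p$. The polynomial $\prod_\omega(x+\omega\cdot\mathbf h)$ can be a constant times a square only if $(\omega-\omega')\cdot\mathbf h\equiv0\pmod p$ for some $\omega\neq\omega'$. This happens for a proportion $O_k(1/p)$ of $\mathbf h$.
\item For every other $\mathbf h$, the Weil bound gives $\E_x\chi_p\bigl(\prod_\omega(x+\omega\cdot\mathbf h)\bigr)\ll_k p^{-1/2}$. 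Hence $\|\chi_p\|_{U^k(\Z/p\Z)}^{2^k}\ll_k p^{-1/2}$.
\item A real primitive character has conductor that is squarefree apart from a factor $4$ or $8$. The Chinese remainder theorem factorises both $\chi$ and its $U^k$ norm over the prime power factors of $q$.
\item Multiplying the local bounds, at a cost $C_k^{\omega(q)}\ll_\eta q^{\eta}$, gives $\|\chi\|_{U^k(\Z/q\Z)}\ll_{k,\eta}q^{-2^{-k-1}+\eta}$.
\end{itemize}
Take $\eta=2^{-k-3}$ and absorb $q/\varphi(q)\ll q^{2^{-k-3}}$; this yields exactly $q^{-2^{-k-2}}$. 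With that input in place, the rest of your argument goes through unchanged.
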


\begin{proof}
We take $z=w$ in Lemma~\ref{lem:mixed-cutoff-cube} and use the definition of Gowers norms to obtain
\begin{align*}
 \|\Lambda_w\chi\|_{U^k(I)}\ll_k\frac q{\varphi(q)}\|\chi\|_{U^k(\Z/q\Z)}+\exp(-c(\log X)^{4/5}/2^k).
\end{align*}
By \cite[proof of Lemma 5.6]{TT} (which on inspection does not require the character there to be exceptional) and the crude bound $q/\varphi(q)\ll_k q^{2^{-k-3}}$, we have
\begin{equation}
 \label{eq:exceptional-character-bound}
 \frac q{\varphi(q)}\|\chi\|_{U^k(\Z/q\Z)}\ll_k q^{2^{-k-3}}q^{-2^{-k-1}+2^{-k-3}}=q^{-2^{-k-2}}.
\end{equation}
Hence,
\begin{equation}
 \label{eq:local-exceptional-unweighted}
 \|\Lambda_w\chi\|_{U^k(I)}\ll_{k,\eps}q^{-2^{-k-2}}+\exp(-c(\log X)^{4/5}).
\end{equation}

Since $x\asymp X$, the mean value theorem gives
\begin{align*}
 \sup_{n\in I}|n^{\beta-1}-x^{\beta-1}|\ll X^{\beta-1}H/X.
\end{align*}
The triangle inequality and $\|f\|_{U^k(I)}\leq\|f\|_\infty$ therefore give
\begin{equation}
 \label{eq:exceptional-weight-freezing}
 \|\Lambda_w\chi(\cdot)(\cdot)^{\beta-1}\|_{U^k(I)}\leq x^{\beta-1}\|\Lambda_w\chi\|_{U^k(I)}+O\bigl(X^{\beta-1}(H/X)\log(2w)\bigr).
\end{equation}
The error in \eqref{eq:exceptional-weight-freezing} is $O(X^{-\varepsilon/2})$ since $H\leq X^{1-\eps}$. Combining \eqref{eq:local-exceptional-unweighted} and \eqref{eq:exceptional-weight-freezing} proves \eqref{eq:local-exceptional-bound}.
\end{proof}

\begin{lemma}[Comparison of models]
\label{lem:local-model-comparison}
Let $k\geq2$ be an integer and let $\eps>0$. There are constants $c=c(k,\eps)>0$ and $X_0=X_0(k,\eps)\geq3$ such that the following holds.

Let $X\geq X_0$ and let $I=x+[H]\subset[X,3X]\cap\Z$, where $x,H\in\Z$ and
\begin{align*}
 X^{1/2+\eps}\leq H\leq X^{1-\eps},\qquad 2\leq w\leq z\leq\exp((\log X)^{1/10}).
\end{align*}
Then
\begin{equation}
 \label{eq:local-model-comparison}
 \|\widetilde\Lambda_z-\widetilde\Lambda_w\|_{U^k(I)}\ll_{k,\eps}w^{-2^{-k-2}}+\exp(-c(\log X)^{4/5}).
\end{equation}
\end{lemma}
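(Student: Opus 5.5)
We bound $\|\widetilde\Lambda_z-\widetilde\Lambda_w\|_{U^k(I)}$ by splitting into the untwisted and twisted parts. The level $z$ and level $w$ exceptional pairs are related as follows: a level $z$ Siegel zero satisfies $\beta\geq 1-c_0/\log z\geq 1-c_0/\log w$ only if it is also a level $w$ zero with conductor $\leq w$. So we distinguish three cases: no exceptional pair at either level; the same pair $(\beta,\chi)$ at both levels; or a pair at one level only. Writing $\widetilde\Lambda_v=\Lambda_v-\Lambda_v\chi(\cdot)(\cdot)^{\beta-1}$ where applicable, the triangle inequality reduces the problem to two bounds. The first is
\[
\|\Lambda_z-\Lambda_w\|_{U^k(I)}\ll w^{-2^{-k-2}}+\exp(-c(\log X)^{4/5}).
\]
The second bounds $\|(\Lambda_z-\Lambda_w)\chi(\cdot)(\cdot)^{\beta-1}\|_{U^k(I)}$ when the pair is common. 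When the pair exists at only one level $v\in\{w,z\}$, we instead bound $\|\Lambda_v\chi(\cdot)(\cdot)^{\beta-1}\|_{U^k(I)}$ directly by Lemma~\ref{lem:local-exceptional}. In that case the conductor $q$ must be large: if the pair is level $w$ but not level $z$, then $q>z\geq w$ or $\beta<1-c_0/\log z$. We handle this by bounding the twisted model via $q^{-2^{-k-2}}$ when $q$ is large, and via $X^{\beta-1}\leq\exp(-c_0\log X/\log z)$, which is tiny since $\log z\leq(\log X)^{1/10}$, in the other subcase. For $q\leq 2$ there is no primitive real character of conductor $2$, and $q=1$ gives the zeta function, which has no such zero. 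So $q\geq 3$ and Lemma~\ref{lem:local-exceptional} applies.

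For the untwisted comparison we expand the $2^k$-th power of the Gowers norm. Multilinearity writes $\|\Lambda_z-\Lambda_w\|_{U^k(I)}^{2^k}$ as an alternating sum over $(v_\omega)\in\{w,z\}^{\{0,1\}^k}$ of cube averages $\E_{\mathcal C_I}\prod_\omega\Lambda_{v_\omega}(\psi_\omega)$, up to the normalisation $\|\1_I\|_{U^k(\Z)}^{2^k}\asymp|\mathcal C_I|$. Lemma~\ref{lem:mixed-cutoff-cube} with the trivial character ($q=1$) gives each term as $B_{1,w}+O(B_{1,w}/w+\exp(-c(\log X)^{4/5}))$, with the same main term $B_{1,w}\ll_k 1$ for every choice. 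The alternating signs sum to zero, since $\sum_{S}(-1)^{|S|}=0$, so the main terms cancel. This leaves $O(1/w+\exp(-c(\log X)^{4/5}))$, and taking $2^k$-th roots gives $w^{-2^{-k}}\leq w^{-2^{-k-2}}$.

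For the common twisted term we first freeze $n^{\beta-1}$ to $x^{\beta-1}\leq1$ at cost $O(X^{-\eps/2})$, exactly as in~\eqref{eq:exceptional-weight-freezing}. We then run the same expansion with $\chi(\psi_\omega)\Lambda_{v_\omega}(\psi_\omega)$, using $\chi^2=\chi$ pattern-wise; the cube product in Lemma~\ref{lem:mixed-cutoff-cube} already carries $\chi$ in every factor. The main terms are all equal to $B_{\chi,w}$ and cancel by the alternating sum. The errors are $O(B_{\chi,w}/w+\exp(-c(\log X)^{4/5}))$, which is acceptable since $B_{\chi,w}\ll 1$ by~\eqref{eq:exceptional-character-bound}.

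The main obstacle is the bookkeeping of the Siegel pairs across levels. We must check that every mismatch case produces either a large conductor or a tiny $X^{\beta-1}$; the analytic cancellation itself is immediate from Lemma~\ref{lem:mixed-cutoff-cube}.
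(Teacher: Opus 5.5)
Your proposal is correct and follows essentially the same route as the paper. You expand the $2^k$-th power, apply Lemma~\ref{lem:mixed-cutoff-cube} (with $\chi=1$, and with the common exceptional character after freezing $n^{\beta-1}$) so that the identical main terms cancel in the alternating sum, and you dispose of a pair present at only one level via Lemma~\ref{lem:local-exceptional} (conductor $>w$) or the pointwise bound $X^{\beta-1}\leq\exp(-c_0\log X/\log z)$.

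There are two small slips. The fact you need is that $\chi$ is real, so $\overline{\chi}=\chi$; $\chi^2=\chi$ is false. You should also list the case of two different pairs at the two levels, which the paper handles by combining exactly these two one-level bounds via the triangle inequality.
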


\begin{proof}
We first consider
\begin{align*}
 \|\chi(\widetilde\Lambda_z-\widetilde\Lambda_w)\|_{U^k(I)},    
\end{align*}
where $\chi$ is a primitive real character $\chi$ of conductor $q\leq w$, with $\chi=q=1$ allowed. We apply the definition of Gowers norms and Lemma~\ref{lem:mixed-cutoff-cube} to obtain, for some nonnegative quantities 
\begin{align*}
B_{\chi,w}\ll_k \left(\frac q{\varphi(q)}\|\chi\|_{U^k(\Z/q\Z)}\right)^{2^k}    
\end{align*}
the estimate
\begin{align*}
\|\chi(\widetilde\Lambda_z-\widetilde\Lambda_w)\|_{U^k(I)}^{2^k}=\sum_{\mathbf{j}\in \{0,1\}^{\{0,1\}^k}}(-1)^{\sum_{\omega}(1-j_{\omega})}B_{\chi,w}+O_{k,\varepsilon}(B_{k,\omega}/w+w^{-1}).    
\end{align*}
The main term simplifies to $(1-1)^{2^k}=0$, and by~\eqref{eq:exceptional-character-bound} we have $B_{k,w}\ll q^{-2^{k-2}}$. 

We therefore obtain
\begin{equation}
 \label{eq:local-cramer-comparison}
 \|\chi(\Lambda_z-\Lambda_w)\|_{U^k(I)}\ll_{k,\eps}w^{-2^{-k}}.
\end{equation}

If the same exceptional pair $(\beta,\chi)$ occurs at both levels, then
\begin{align*}
 \widetilde\Lambda_z(n)-\widetilde\Lambda_w(n) &=(\Lambda_z(n)-\Lambda_w(n))(1-\chi(n)n^{\beta-1}).
\end{align*}
We freeze $n^{\beta-1}$ at $x^{\beta-1}$ as in \eqref{eq:exceptional-weight-freezing}. Since $|\Lambda_z-\Lambda_w|\ll\log(2z)$, this gives
\begin{align*}
 \|\widetilde\Lambda_z-\widetilde\Lambda_w\|_{U^k(I)}
 &\leq\|\Lambda_z-\Lambda_w\|_{U^k(I)}+x^{\beta-1}\|\chi(\Lambda_z-\Lambda_w)\|_{U^k(I)}\\
 &\quad+O((H/X)\log(2z)).
\end{align*}
Applying \eqref{eq:local-cramer-comparison} with $\chi=1$ and with $\chi$ being the exceptional character proves \eqref{eq:local-model-comparison} in the case of a common exceptional pair. If neither pair exists, we use \eqref{eq:local-cramer-comparison} with $\chi=1$ directly.

Lastly, suppose that an exceptional pair $(\beta_z,\chi_z)$ exists at level $z$ but not at level $w$. Then the conductor $q_z$ satisfies $q_z>w$. Lemma \ref{lem:local-exceptional} gives
\begin{equation}
 \label{eq:new-exceptional-correction}
 \|\Lambda_z\chi_z(\cdot)(\cdot)^{\beta_z-1}\|_{U^k(I)}\ll_{k,\eps}w^{-2^{-k-2}}+\exp(-c(\log X)^{4/5}).
\end{equation}
If an exceptional pair $(\beta_w,\chi_w)$ exists at level $w$ but not at level $z$, then $1-\beta_w>c_0/\log z$. Thus
\begin{equation}
 \label{eq:old-exceptional-correction}
 \sup_{n\in I}|\Lambda_w(n)\chi_w(n)n^{\beta_w-1}|\ll\log(2w)X^{-c_0/\log z}\ll\exp(-c_0(\log X)^{9/10}/2).
\end{equation}
The two estimates \eqref{eq:new-exceptional-correction} and \eqref{eq:old-exceptional-correction} also apply when different pairs $(\beta_w,\chi_w)$ and $(\beta_z,\chi_z)$ occur at levels $w$ and $z$. Combining them with~\eqref{eq:new-exceptional-correction} and the $\chi=1$ case of \eqref{eq:local-cramer-comparison} and the triangle inequality, we obtain
\begin{align*}
 \|\widetilde\Lambda_z-\widetilde\Lambda_w\|_{U^k(I)}\ll_{k,\eps}w^{-2^{-k-2}}+\exp(-c(\log X)^{4/5}).
\end{align*}
for some $c>0$.
\end{proof}

\subsection{Arithmetic decomposition}

We will need the following Dirichlet polynomial estimate for the sequences $\mu, 1, \log$ arising from Heath-Brown's identity. While this is standard, we were unable to locate a reference with the desired level of uniformity. 

\begin{lemma}[Dirichlet polynomials at high frequency]
\label{lem:high-frequency-DP}
Fix $0<\alpha\leq1$, $B\geq1$, and $0<\vartheta\leq B$. There is $X_0=X_0(\alpha,\vartheta,B)\geq3$ with the following property. 

Let $X\geq X_0$, let $L>0$ and $u\in\R$, and let $r,q\in\N$ satisfy
\begin{align}\begin{aligned}\label{eq:rqu}
 1\leq r\leq\exp((\log X)^{1/10}),\qquad
 1\leq q\leq\exp((\log X)^{1/10}),\\
 X^\alpha\leq L\leq X,\qquad
 X^\vartheta\leq|u|\leq X^B.
 \end{aligned}
\end{align}
Let $\psi$ be a Dirichlet character modulo $q$ and let $a_0$ be one of the arithmetic functions $\mu$, $1$, or $\log$. Then, for every integer interval $J\subset[L,2L]$,
\begin{equation}
 \label{eq:high-frequency-DP}
 \left|\sum_{\ell\in J}
 \frac{a_0(r\ell)\psi(\ell)}{\ell^{1/2+iu}}\right|\leq L^{1/2}\exp(-(\log X)^{1/10}).
\end{equation}
\end{lemma}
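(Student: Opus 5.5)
The plan is to obtain \eqref{eq:high-frequency-DP} from a zero-free region for Dirichlet $L$-functions via Perron's formula. The first step is to remove the factor $r$. For $a_0=1$ there is nothing to do. For $a_0=\log$ we write $\log(r\ell)=\log r+\log \ell$; the $\log\ell$ term is handled by partial summation from the $a_0=1$ case, or equivalently by differentiating in $s$. For $a_0=\mu$ we write $\mu(r\ell)=\mu(r)\mu(\ell)\1_{(\ell,r)=1}$ and detect the coprimality with the principal character modulo $r$. The twisting character then becomes $\psi\chi_0^{(r)}$, which has modulus at most $qr\leq \exp(2(\log X)^{1/10})$ and is induced by a primitive character of conductor at most $q$. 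The general sum therefore reduces to
\[
\sum_{\ell\in J}\frac{\mu(\ell)\chi(\ell)}{\ell^{1/2+iu}},\qquad \sum_{\ell\in J}\frac{\chi(\ell)(\log \ell)^j}{\ell^{1/2+iu}}\quad (j=0,1),
\]
with $\chi$ of modulus $q'\leq\exp(2(\log X)^{1/10})$. Partial summation shows that it suffices to bound $\sum_{\ell\leq y}a_0(\ell)\chi(\ell)\ell^{-iu}$ by $y\exp(-2(\log X)^{1/10})$ for $y\in[L,2L]$.

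For $a_0\in\{1,\log\}$ the step is elementary. If $\chi$ is nonprincipal, we split into residue classes modulo $q'$ and apply van der Corput or Weyl bounds for $\sum \ell^{-iu}$ on each progression. Alternatively, we use Perron's formula with $L(s,\chi)$, which has no pole off $s=1$, and the pole at $s=1$ contributes only when $\chi$ is principal. In that case the residue is $y^{1-iu}/(1-iu)$, which is of size at most $y\,X^{-\vartheta}$. Using a truncated Perron formula with height $T_0=X^{2B}$ and shifting the contour to $\operatorname{Re}s=1-c/\log(q'T_0)$, with convexity bounds on $L$, the contribution is $y\exp(-c\log y/\log(q'X))$ plus the residue. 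Since $\log y\geq\alpha\log X$, this is power saving, which is more than enough.

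For $a_0=\mu$ the same Perron argument applies with $1/L(s,\chi)$. We use the classical zero-free region $\sigma\geq 1-c/\log(q'(|t|+2))$ and the bound $1/L\ll\log(q'(|t|+2))$ there, for all characters except a possible exceptional one. This yields the prime-number-theorem-type bound $y\exp(-c\sqrt{\log y})$ uniformly in $|t|\leq X^{2B}$. This is much smaller than $y\exp(-2(\log X)^{1/10})$ because $\log y\asymp\log X$. The hypothesis $|u|\geq X^\vartheta$ is not even needed here, except that it keeps us away from a possible Siegel zero. Indeed, a real exceptional zero $\beta$ at height $0$ produces the term $y^{\beta-iu}/(\beta-iu)L'(\beta,\chi)$, whose size is at most $y|u|^{-1}/|L'(\beta)|$. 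Bounding $|L'(\beta)|^{-1}$ is not directly possible, but Siegel's bound $1-\beta\gg_\varepsilon q'^{-\varepsilon}$ is ineffective. We therefore avoid it: we move the contour only to the left of $\operatorname{Re} s=1-c/\log(q'T_0)$ while staying to the right of $\beta$, where Landau--Page still gives $|1/L|\ll\log$ apart from the pole at $\beta$, and we bound the residue at $\beta$ by the factor $1/|\beta-iu|\leq X^{-\vartheta}$ times $|L'(\beta,\chi)|^{-1}$. For the latter we use the effective bound $L'(\beta,\chi)\gg$ coming from $1-\beta\geq q'^{-1/2}(\log q')^{-2}$ and the mean value theorem, which gives a factor $\exp(O((\log X)^{1/10}))$. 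This factor is killed by $X^{-\vartheta}$.

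The main obstacle is this uniform treatment of the exceptional zero and the bookkeeping of the induced-character moduli $qr$ up to $\exp(2(\log X)^{1/10})$. These moduli exceed the Siegel range $w$, but the zero-free region in the form $1-c/\log(q'T)$ still gives savings of size $\exp(-c\log X/(\log X)^{1/10})$. Finally, we choose $X_0$ so large that all implied constants are absorbed into the stated bound $\exp(-(\log X)^{1/10})$.
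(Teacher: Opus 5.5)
Your preliminary reductions are fine: removing $r$ via $\mu(r\ell)=\mu(r)\mu(\ell)\1_{(\ell,r)=1}$, splitting $\log(r\ell)$, and partial summation to unweighted sums. For $a_0=1$ your van der Corput/Weyl route is viable; it is essentially what the paper imports from \cite[Lemma~3.9(i)]{MSTT}. There is, however, a genuine gap in the contour-shifting argument, which is your only argument for $a_0=\mu$.

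\textbf{Where it fails.} After the twist the Dirichlet series is $1/L(s+iu,\chi)$, so the $L$-function is evaluated at heights of size about $|u|$, where $\log|u|\geq\vartheta\log X$. The classical zero-free region there has width $c/\log(q'(|u|+T_0))\asymp 1/\log X$. Moving the line of integration to $\operatorname{Re}s=1-c/\log(q'T_0)$ therefore gains only the factor
\[
 y^{-c/\log(q'T_0)}=\exp\bigl(-c\log y/\log(q'T_0)\bigr)\asymp_{\alpha,B}1 .
\]
Because $\log y\asymp\log X\asymp\log T_0$, this is no saving at all once the logarithmic losses from $1/L$ and the contour length are included. The same objection applies to your Perron/convexity alternative for $a_0=1$; convexity further left only helps when $|u|\leq y^{2-\eps}$.

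The bound $y\exp(-c\sqrt{\log y})$ comes from choosing truncation height $\exp(\sqrt{\log y})$. It holds only for twists $|t|\leq\exp(c\sqrt{\log y})$, not uniformly for $|t|\leq X^{2B}$. Your closing remark replaces $\log(q'T)$, which is $\asymp\log X$, by $\log q'\ll(\log X)^{1/10}$; that is exactly where the argument breaks.

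\textbf{The missing ingredient.} You need the Vinogradov--Korobov zero-free region. For $|u|/2\leq|\tau|\leq3|u|/2$ and $q\leq\exp((\log X)^{1/10})$, it gives $L(\sigma+i\tau,\psi^*)\neq0$ for $\sigma\geq1-2\eta$, where
\[
 \eta\asymp(\log X)^{-2/3}(\log\log X)^{-1/3}.
\]
It also gives $|L(\sigma+i\tau,\psi^*)|^{-1}\ll(\log X)^{O(1)}$ for $\sigma\geq1-\eta$. The finite Euler factor from $rq$ is harmless there. Since $\log L\geq\alpha\log X$, the shifted integral saves $L^{-\eta}=\exp(-c(\log X)^{1/3}(\log\log X)^{-1/3})$, which beats $\exp(-(\log X)^{1/10})$.

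\textbf{Truncation height and the Siegel zero.} The paper truncates Perron's formula at height $|u|/2$, not $X^{2B}$. The truncation error is then $\ll L^{1/2}(\log X)^{O(1)}|u|^{-1}$, which is exactly where $|u|\geq X^\vartheta$ is used. Moreover, the $L$-function is only ever evaluated at heights $\gg X^\vartheta$, far from the real axis, so the exceptional zero never enters. This makes your residue analysis at $\beta$ unnecessary. As written it is also not justified: the mean value theorem gives $L'$ at an intermediate point, not a lower bound for $|L'(\beta,\chi)|$. In addition, a contour kept to the right of $\beta$ picks up no residue at $\beta$ in the first place.
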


\begin{proof}
For $a_0=1$, we apply \cite[Lemma~3.9(i)]{MSTT} with scales $X_{*}=X^B$ and length $L\geq X_{*}^{\alpha/B}$. The modulus condition there holds for large $X$, since $q\leq X^{\alpha/2}$. Hence, for some $\delta=\delta(\alpha/B)>0$, we obtain
\begin{align*}
 \left|\sum_{\ell\in J}\frac{\psi(\ell)}{\ell^{1/2+iu}}\right|
 \ll_{\alpha,B}L^{1/2}\left(X^{-B\delta}+\frac{\log X}{(|u|+1)^{1/2}}\right),
\end{align*}
which in light of~\eqref{eq:rqu} proves \eqref{eq:high-frequency-DP} for $a_0=1$.

For $a_0=\log$, we can apply partial summation, gaining an extra factor of $O(\log X)$, since $\log(r\ell)$ has supremum and total variation $O(\log X)$ on $[L,2L]$. The power saving for $a_0=1$ absorbs this factor and yields \eqref{eq:high-frequency-DP}.

We now assume that $a_0=\mu$. We may assume that $r$ is squarefree. Let $\psi^{*}$ be the primitive character inducing $\psi$, of conductor $q^{*}$. For
$\operatorname{Re}s>1$,
\begin{equation*}
\sum_{\ell\geq1}\frac{\mu(r\ell)\psi(\ell)}{\ell^s} =\frac{\mu(r)E_{r,q}(s)}{L(s,\psi^{*})},
\qquad
E_{r,q}(s)=\prod_{\substack{p\mid rq\\ p\nmid q^{*}}}
(1-\psi^*(p)p^{-s})^{-1}.
\end{equation*}

Set
\begin{align*}
\eta=c(\log X)^{-2/3}(\log\log X)^{-1/3},
\end{align*}
where $c=c(B)>0$ is sufficiently small. Since
$|u|\geq X^{\vartheta}$ and $q^{*}\leq\exp((\log X)^{1/10})$, the Vinogradov--Korobov zero-free region gives
\begin{align*}
L(\sigma+i\tau,\psi^{*})\ne0
\end{align*}
whenever
\begin{align*}
\sigma\geq1-2\eta,\qquad
|u|/2\leq|\tau|\leq3|u|/2.
\end{align*}
The standard bound for $|L(s,\psi^{*})|^{-1}$ in a slightly smaller zero-free region gives
\begin{align*}
|L(\sigma+i\tau,\psi^*)|^{-1}
\ll (\log X)^{O_B(1)}
\end{align*}
for $\sigma\geq1-\eta$ in the same range of $\tau$. Also,
\begin{align*}
|E_{r,q}(\sigma+i\tau)|
\ll (\log X)^{O_(1)}
\end{align*}
there, since $\eta\log(rq)\ll (\log X)^{-1/2}$.

We apply Perron's formula to $\mu(r\ell)\psi(\ell)$, truncated at height $|u|/2$, and shift the contour from
$\operatorname{Re}s=1+1/\log(2L)$ to $\operatorname{Re}s=1-\eta$. Throughout the shifted contour the
imaginary part of the $L$-function argument lies between $|u|/2$ and $3|u|/2$. Hence
\begin{align*}
\left|\sum_{\ell\in J}
\frac{\mu(r\ell)\psi(\ell)}{\ell^{1/2+iu}}\right|
\ll_ L^{1/2}(\log X)^{O_(1)} \left(L^{-\eta}+|u|^{-1}+L^{-1}\right).
\end{align*}
Since $L\geq X^{1/\alpha}$ and $|u|\geq X^{\vartheta}$, we see that the left-hand side is at most
\begin{align*}
L^{1/2}\exp(-(\log X)^{1/10})
\end{align*}
for sufficiently large $X$.
\end{proof}

We are now ready to state and prove a quantitative combinatorial decomposition, modelled on~\cite[Lemma 4.3]{MSTT}, but with savings of up to quasipolynomial size in the Type II Archimedean conclusion.

\begin{lemma}[Combinatorial decomposition with a growing parameter]
\label{lem:growing-decomposition}
Fix $0<\eps<3/16$. There is a constant $\mathbf C_1\geq1$, depending only on $\eps$, with the following property. 

For every $A\geq1$ there are $K\geq1$, $c>0$, and $X_0\geq3$, depending only on $\eps,A$, such that the following holds. Let $X\geq X_0$ and let $I\subset[X,3X]\cap\Z$ be an interval of cardinality $H$ satisfying
\begin{equation}
 \label{eq:decomposition-interval-range}
 X^{5/8+\eps}\leq H\leq X^{1-\eps}.
\end{equation}
Let $w,R$ satisfy
\begin{align}
 \label{eq:decomposition-envelope-range}
 &2\leq w\leq\exp((\log X)^{1/10}),\qquad (\log X)^K\leq R,\qquad R^{K}\leq\exp((\log X)^{1/10}).
\end{align}
Assume one of the following two cases.
\begin{enumerate}[label=\textnormal{(\alph*)}]
 \item Define $\Lambda_w^\dagger=\Lambda_w$.
 \item Define $\Lambda_w^\dagger=\widetilde\Lambda_w$, and suppose that no level $w$ exceptional pair exists or that the exceptional conductor is at most $R$.
\end{enumerate}
Then there are an integer $1\leq N\leq R^{\mathbf C_1}$, coefficients $c_1,\ldots,c_N\in\C$ with $\sum_{\nu=1}^N|c_\nu|\leq R^{\mathbf C_1}$, arithmetic functions $f_1,\ldots,f_N$, and an error $E_I\colon I\to\C$ such that
\begin{equation}
 \label{eq:decomposition-exact}
 \Lambda(n)-\Lambda_w^\dagger(n)=\sum_{\nu=1}^Nc_\nu f_\nu(n)+E_I(n)
\end{equation}
for every $n\in I$, with
\begin{equation}
 \label{eq:decomposition-strong-error}
 \|E_I\|_{\ell^1(I)}\leq H\exp(-(\log X)^{1/10}),
\end{equation}
and for each $\nu$, at least one of the following alternatives holds.
\begin{enumerate}[label=\textnormal{(\roman*)}]
 \item The function $f_\nu$ is a $(R^{-\mathbf C_1},\mathbf C_1X^{5/8})$ Type I sum.
 \item The function $f_\nu$ is a $(R^{-\mathbf C_1},\mathbf C_1X^{7/16})$ Type $I_2$ sum.
 \item The function $f_\nu$ is a $(R^{-\mathbf C_1},\mathbf C_1^{-1}X^{3/8},\mathbf C_1X^{5/8})$ Type II sum and satisfies
 \begin{equation}
  \label{eq:decomposition-TypeII-second-moment}
  \sum_{n\in I}|f_\nu(n)|^2\leq HR^{\mathbf C_1}.
 \end{equation}
Moreover, for every interval $I_1\subset I$ of cardinality $H_1\geq HR^{-A}$, every integer $1\leq q_0\leq R^{A}$, every residue class $a\pmod{q_0}$, and every real $T$ satisfying
 \begin{equation}
  \label{eq:decomposition-frequency-range}
  R^{K}X/H\leq|T|\leq X^{A},
 \end{equation}
we have
 \begin{equation}
  \label{eq:decomposition-high-T}
  \left|\sum_{\substack{n\in I_1\\n\equiv a\pmod {q_0}}}f_\nu(n)n^{iT}\right|\leq H_1R^{-A}.
 \end{equation}
\end{enumerate}
\end{lemma}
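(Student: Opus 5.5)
We follow the proof of \cite[Lemma 4.3]{MSTT}, keeping track of the number of terms and of the size of the coefficients in terms of $R$.

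\emph{Step 1: Heath-Brown's identity.} Apply Heath-Brown's identity to $\Lambda$ on $I$ with a fixed number $L=L(\eps)$ of factors, at level $(3X)^{1/L}$. This writes $\Lambda(n)$ as a combination, with $O_\eps(1)$ bounded coefficients, of Dirichlet convolutions $a_1*\cdots*a_{2L}$. Each factor is one of $\mu\1_{\leq (3X)^{1/L}}$, $1$ or $\log$. We dyadically decompose every factor, which gives $O((\log X)^{O(1)})\leq R^{O(1)}$ pieces.

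We then apply the combinatorial lemma of \cite{MSTT} to the dyadic scales $(N_1,\ldots,N_{2L})$. Since $H\geq X^{5/8+\eps}$, each piece falls into one of three shapes:
\begin{itemize}
\item a Type I sum with a smooth factor of length $\gg X^{3/8}$;
\item a Type $I_2$ sum with two smooth factors and rough part $\leq X^{7/16}$;
\item a product $\alpha*\beta$ with $\alpha$ supported in $[X^{3/8}/\mathbf C_1,\mathbf C_1X^{5/8}]$.
\end{itemize}
The sharp dyadic cutoffs on the smooth factors $1$ and $\log$ have total variation $O(\log X)$. Divisor bounds give the second moment bounds \eqref{eq:type-alpha-second-moment} and \eqref{eq:type-II-moments} with loss $(\log X)^{O(1)}$. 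Hence the parameter $R^{-\mathbf C_1}$ suffices, because $R\geq(\log X)^K$. Pieces whose scales lie within a $\log$-factor of the boundary are handled by smoothing, as in \cite{MSTT}.

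\emph{Step 2: the model function.} Lemma~\ref{lem:local-model-typeI} gives $\Lambda_w=a*1$ on $I$, up to $\ell^1$ error $H\exp(-(\log X)^{4/5})$. This is a Type I sum with $a$ supported on $[1,X^{1/3}]$, whose second moment carries a factor $\log^2 w\leq R$. It is absorbed into (i) with coefficient $-1$.

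In case (b), when the exceptional pair exists with conductor $q\leq R$, Lemma~\ref{lem:local-model-typeI} also gives $\Lambda_w\chi(n)n^{\beta-1}\approx a_\chi*b_\chi$. Here $\|b_\chi\|_{\TV(\N;q)}\leq 2q\leq 2R$, so this is again a Type I sum with $q\leq R^{\mathbf C_1}$. This is exactly where the restriction on the exceptional conductor enters. The combined error is $E_I$, and it satisfies \eqref{eq:decomposition-strong-error}.

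\emph{Step 3: the Type II second moment and high frequency bound.} For a Type II piece $f=\alpha*\beta$, the bound \eqref{eq:decomposition-TypeII-second-moment} follows from $|f|\leq d_{2L}\log X$ and Shiu's bound in short intervals, since $H\geq X^{1/2}$. The main obstacle is \eqref{eq:decomposition-high-T}. Our plan for it has four parts.
\begin{enumerate}
\item Detect the conditions $n\in I_1$ and $n\equiv a\pmod{q_0}$. For the congruence, use Dirichlet characters $\psi\bmod q_0$, after extracting common factors $r$ of $n$ with $q_0$. For the interval, use a truncated Perron formula at height $T_0=R^{K/2}X/H$, giving
\[
\sum_{n\in I_1}f(n)n^{iT}\psi(n)=\frac1{2\pi i}\int_{|t|\leq T_0}\frac{(x+H_1)^{1/2+it}-x^{1/2+it}}{1/2+it}\,\mathcal F(1/2-iT+it)\,dt+\text{error}.
\]
Here $\mathcal F(s)=\prod_j\sum_{\ell\sim N_j}a_j(r_j\ell)\psi(\ell)\ell^{-s}$ is the product of the Dirichlet polynomials of the factors.
\item Since $|T|\geq R^KX/H$ while $|t|\leq T_0$, every frequency $T-t$ has size $\gg R^KX/H\geq X^{\eps}$.
\item Among the factors there is a long factor (length $\geq X^{1/L}$, a power of $X$). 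If it is $1$ or $\log$, apply Lemma~\ref{lem:high-frequency-DP} to it. If all long factors are $\mu$, apply Lemma~\ref{lem:high-frequency-DP} to a $\mu$ factor; since $\mu$ is supported on length $\leq X^{1/L}$, it still has length $\geq X^{\alpha}$ after taking the dyadic piece of size at least $X^{1/(2L)^2}$. This gives a saving of $\exp(-(\log X)^{1/10})$.
\item Bound the remaining factors trivially by $\ll N_j^{1/2}(\log X)^{O(1)}$, and bound the $t$-integral by $\min(H_1/X^{1/2},1/|t|)$.
\end{enumerate}
This yields
\[
\Bigl|\sum f(n)n^{iT}\Bigr|\ll H_1(\log X)^{O(1)}\exp(-(\log X)^{1/10})\,R^{A}+\text{Perron error}.
\]
Because $R^{K}\leq\exp((\log X)^{1/10})$ and $K$ is large compared with $A$, the right-hand side is $\leq H_1R^{-A}$.

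The delicate point is the Perron truncation error for the short interval $I_1$ of length $H_1\geq HR^{-A}$. It is $\ll X^{1+o(1)}/T_0$ relative to $H_1$, which forces $T_0\geq R^{2A}X/H$. Choosing $K\geq 4A$ gives room both for this and for the frequency separation $|T-t|\gg|T|$.
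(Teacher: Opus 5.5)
Your Steps 1 and 2 and the Type II second moment bound follow the paper. The high-frequency estimate \eqref{eq:decomposition-high-T} in Step 3, however, has a genuine gap: Perron's formula combined with pointwise bounds loses a factor $X/H_1$.

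The Perron kernel is $K(t)=\int_x^{x+H_1}u^{-1/2+it}\,\mathrm du$. It satisfies $|K(t)|\asymp H_1X^{-1/2}$ for $|t|\leq X/(10H_1)$, and $|K(t)|\ll X^{1/2}/|t|$ beyond that. Hence $\int_{|t|\leq T_0}|K(t)|\,\mathrm dt\asymp X^{1/2}\log(T_0H_1/X)$. Your bound $\min(H_1/X^{1/2},1/|t|)$ drops a factor $X^{1/2}$ in the second term.

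Multiplying by your pointwise bound $|\mathcal F|\ll X^{1/2}\exp(-(\log X)^{1/10})(\log X)^{O(1)}$ gives $X\exp(-(\log X)^{1/10})(\log X)^{O(1)}$. This is worse than the trivial bound $H_1(\log X)^{O(1)}$, since $X/H_1\geq X^{\eps}$. For this route to work you would need a pointwise saving in $\mathcal F$ exceeding $X/H_1$, a power of $X$, at heights up to $X^A$. Lemma~\ref{lem:high-frequency-DP} does not give this, and nothing of the kind is available for the $\mu$ factors.

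A symptom of the problem is that your Step 3 never uses the Type II shape or the hypothesis $H\geq X^{5/8+\eps}$. It would therefore give the same conclusion for every Heath-Brown component, and hence for $\Lambda$ itself twisted by $n^{iT}$ on intervals of length $X^{\eps}$. That is far beyond what is known.

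The missing ingredient is an $L^2$ (mean value) argument over $|t|\lesssim X/H_1$. The paper proceeds as follows.
\begin{enumerate}
\item It partitions the Type II component as $\{1,\ldots,m\}=I_0\sqcup J\sqcup J'$ with $1/4\leq\alpha_{I_0}\leq1/2$ and $|\alpha_J-\alpha_{J'}|\leq1/4$.
\item It writes the twisted sum as a trilinear form in $b_J$, $b_{J'}$ and $b_{I_0}$.
\item It applies the Baker--Harman--Pintz type estimate \cite[Lemma~3.5(ii)]{MSTT}. This combines mean value theorems for the $b_J$ and $b_{J'}$ polynomials with a pointwise hypothesis on the $b_{I_0}$ polynomial only. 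This is exactly where the Type II structure and the exponent $5/8$ are used.
\end{enumerate}

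Lemma~\ref{lem:high-frequency-DP} enters only to verify that pointwise hypothesis. It is applied at the shifted frequency $t-T$, with $|t|\leq yW_*^4/H_1$ and $W_*=R^{K/10}$, to a factor $i_0\in I_0$ with $\alpha_{i_0}\geq1/80$. This is done after allocating the common factor $r\mid(a,q_0)$ among the factors of $I_0$. The outcome is the bound $H_1R^{2A-K/30}(\log X)^{O(1)}$, which is at most $H_1R^{-A}$ once $K$ is large in terms of $A$.
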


\begin{proof}
\emph{Decomposition and coefficient bounds.} It suffices to prove this decomposition separately for $\Lambda$ and $\Lambda_w^{\dagger}$. We first deal with $\Lambda$. We apply Heath-Brown's identity in the form of \cite[Lemma~2.16, with $L=10$]{MSTT} to $\Lambda$, divide the variables dyadically, and discard components whose support does not meet $I$. After removing zero components and factors supported on $\{1\}$, each remaining convolution has the form
\begin{equation}
 \label{eq:decomposition-convolution-scales}
 \begin{aligned}
 f_\nu& =a^{(1)}*\cdots*a^{(m)},\qquad m\leq20,\\
 \operatorname{supp}(a^{(i)})&\subset(N_i,2N_i],\qquad X_f=\prod_{i=1}^mN_i,\\
 N_i&\geq1,\qquad 2^{-20}X\leq X_f\leq3X,\\
 \alpha_i& =\frac{\log N_i}{\log X_f},\qquad \alpha_S=\sum_{i\in S}\alpha_i,
 \end{aligned}
\end{equation}
where each factor is the restriction of $1$, $\log$, or $\mu$ to its dyadic interval, at most one factor is logarithmic, and $N_i\ll X^{1/10}$ whenever $a^{(i)}$ is a restriction of $\mu$.
Thus $\alpha_i\geq0$ and $\sum_i\alpha_i=1$. Applying \cite[Lemma~2.20(i),(ii)]{MSTT} with $\theta=5/8$ as in \cite[proof of Lemma~4.3]{MSTT}, each component $f_{\nu}$ is of one of the stated type I, Type $I_2$, and Type II forms.

By \cite[Lemma~4.3(i)--(iii) and its proof]{MSTT}, the number of components, their coefficient mass, and the moment and total variation estimates for the coefficients are $(\log X)^{O_\eps(1)}$. Moreover, $|f_\nu(n)|\leq\log(3X)d_{20}(n)$ for $n\in I$. Applying Shiu's bound (\cite[Lemma~2.17]{MSTT}) to $d_{20}^2$ with modulus $1$ gives
\begin{align*}
 \sum_{n\in I}|f_\nu(n)|^2\ll_\eps H(\log X)^{O_\eps(1)}.
\end{align*}
We will choose $\mathbf C_1$ after adding the model components at the end of the proof.

\emph{High frequency estimate.} We fix a Type II component $f_{\nu}$ of the form~\eqref{eq:decomposition-convolution-scales}. By \cite[Lemma~2.20(ii)]{MSTT}, as used in \cite[proof of Lemma~4.3]{MSTT},
we may choose a partition
\begin{align*}
\{1,\ldots,m\}=I_0\sqcup J\sqcup J'
\end{align*}
such that
\begin{align*}
1/4\leq\alpha_{I_0}\leq 1/2,
\qquad
|\alpha_J-\alpha_{J'}|\leq 1/4.
\end{align*}

We divide any logarithmic convolution factor by $\log(3X)$ and denote the resulting factors by $\widetilde a^{(i)}$. For each subset $S\subset\{1,\ldots,m\}$, we write $b_S$ for the convolution of the factors $\widetilde a^{(i)}$ with $i\in S$. After dividing the supports of $b_{I_0},b_J,b_{J'}$ into $O(1)$ dyadic intervals, we need to bound sums of the form
\begin{equation}
 \label{eq:decomposition-trilinear-sum}
 \sum_{\substack{m_1m_2\ell\in I_1\\m_1\sim M_1,\ m_2\sim M_2,\ \ell\sim L\\m_1m_2\ell\equiv a\pmod{q_0}}}
 b_J(m_1)m_1^{iT}b_{J'}(m_2)m_2^{iT}b_{I_0}(\ell)\ell^{iT},
\end{equation}
where
\begin{align*}
 M_1\asymp X_f^{\alpha_J},\qquad M_2\asymp X_f^{\alpha_{J'}},\qquad
 L\asymp X_f^{\alpha_{I_0}},\qquad LM_1M_2\asymp X.
\end{align*}

Since $X_f\asymp X\asymp y$, the conditions
\begin{align*}
 |\alpha_J-\alpha_{J'}|\leq1/4,\qquad \alpha_{I_0}\leq1/2
\end{align*}
verify the corresponding scale hypotheses of the Baker--Harman--Pintz type estimate of \cite[Lemma~3.5(ii)]{MSTT}. Choose $K$ sufficiently large in terms of $\eps,A$ and set $W_*=R^{K/10}$. Then
\begin{align*}
 H_1\geq y^{5/8+\eps/2},\qquad H_1W_*^4\leq y,\qquad
 W_*\leq X^{\eps/1000}.
\end{align*}

We now verify the Dirichlet polynomial hypothesis of \cite[Lemma~3.5(ii), equation~(3.8)]{MSTT}. With
\begin{align*}
 v_\ell=b_{I_0}(\ell)\ell^{iT},
\end{align*}
we must prove
\begin{equation}
 \label{eq:decomposition-DP-hypothesis}
 \left|\sum_{\ell\sim L/r}\frac{v_{r\ell}\chi(\ell)}{\ell^{1/2+it}}\right|
 \ll (L/r)^{1/2}W_*^{-1/3}
\end{equation}
for every $r\mid(a,q_0)$, every character $\chi$ modulo
$q_0/(a,q_0)$, and every $|t|\leq yW_*^4/H_1$.

Since $|I_0|\leq20$ and $\alpha_{I_0}\geq1/4$, there is $i_0\in I_0$ with $\alpha_{i_0}\geq1/80$. Expanding the convolution gives
\begin{align*}
 b_{I_0}(r\ell)=\sum_{\substack{\prod_{i\in I_0}n_i=r\ell}}\,
 \prod_{i\in I_0}\widetilde a^{(i)}(n_i).
\end{align*}
We enumerate $I_0=\{i_1,\ldots,i_m\}$, where $m\leq20$. For each tuple $(n_i)$ in this sum, we allocate the factor $r$ as in \cite[proof of Proposition~3.4(ii), equations~(3.23)--(3.24)]{MSTT}:
set
\[
 R_0=r,\qquad 
 r_{i_j}=(n_{i_j},R_{j-1}),\qquad
 R_j=R_{j-1}/r_{i_j}.
\]
Then
\[
 \prod_{i\in I_0}r_i=r,\qquad
 n_i=r_i u_i,\qquad
 (u_{i_j},R_j)=1.
\]
We group the sum according to the allocation $(r_i)_{i\in I_0}$. There are at most $r^m\leq r^{20}$ such allocations. The coprimality conditions are detected by principal characters; combined with the
original character modulo $q_0/(a,q_0)$, these give characters of modulus dividing
\[
 \frac{rq_0}{(a,q_0)}\leq q_0.
\]
Write $s=r_{i_0}$. Since $s\leq r\leq R^A=X^{o(1)}$,
\[
 N_{i_0}/s\geq X^{1/80-o(1)}\geq X^{1/100}.
\]
Moreover,
\begin{align*}
 |t|\ll (X/H)R^AW_*^4,
\end{align*}
whereas \eqref{eq:decomposition-frequency-range} gives $|T|\geq R^KX/H$. Thus, after choosing $K$ sufficiently large in
terms of $A$,
\begin{equation}
 \label{eq:decomposition-shifted-frequency}
 |T|/2\leq|T-t|\leq3|T|/2,\qquad X^{\eps/2}\leq|T-t|\leq X^{A+1}.
\end{equation}

Fixing the variables other than $u_{i_0}$ leaves an interval $J_0\subset(N_{i_0}/s,2N_{i_0}/s]$ and a sum
\begin{align*}
 \sum_{u\in J_0}
 \frac{a_0(su)\psi(u)}{u^{1/2+i(t-T)}},
\end{align*}
up to a scalar of absolute value at most one, where $a_0\in\{1,\log,\mu\}$ and $\psi$ has modulus at most $q_0$.
We apply
Lemma \ref{lem:high-frequency-DP}. In each case,
\begin{equation}
 \label{eq:decomposition-selected-factor-DP}
 \left|\sum_{u\in J_0}\frac{a_0(su)\psi(u)}{u^{1/2+i(t-T)}}\right|
 \leq (N_{i_0}/s)^{1/2}W_*^{-1}.
\end{equation}

Every remaining normalised factor satisfies
\begin{align*}
 \sum_{u\in(N_i/r_i,2N_i/r_i]}\frac{|\widetilde a^{(i)}(r_i u)|}{u^{1/2}}
 \leq3(N_i/r_i)^{1/2}.
\end{align*}
Since $L\asymp\prod_{i\in I_0}N_i$, summing over the remaining variables and the at most $r^{20}$ factor extractions gives
\begin{align*}
 \left|\sum_{\ell\sim L/r}
 \frac{v_{r\ell}\chi(\ell)}{\ell^{1/2+it}}\right|
 \ll r^{20}(L/r)^{1/2}W_*^{-1}.
\end{align*}
As $r\leq R^A$ and $W_*=R^{K/10}$, choosing $K$ sufficiently large in terms of $A$ makes the last expression at most
\[
 (L/r)^{1/2}W_*^{-1/3}.
\]
This proves \eqref{eq:decomposition-DP-hypothesis}.

We can therefore apply \cite[Lemma~3.5(ii)]{MSTT} to \eqref{eq:decomposition-trilinear-sum}. This gives
\begin{align*}
 \left|\sum_{\substack{n\in I_1\\n\equiv a\pmod {q_0}}}f_\nu(n)n^{iT}\right|
 \ll_\eps H_1R^{2A-K/30}(\log X)^{O_\eps(1)}.
\end{align*}
For $K$ sufficiently large, $\log X\leq R^{1/K}$ bounds this by $O_\eps(H_1R^{-A-1})$. Increasing $X_0$ then gives \eqref{eq:decomposition-high-T}.

\emph{Handling the model.} By Lemma \ref{lem:local-model-typeI}, we can approximate $\Lambda_w$ by $a*1$, with $a$ supported on $[1,X^{1/3}]$. In case \textnormal{(b)}, 
\begin{align*}
\widetilde\Lambda_w=a*1-a_\chi*b_\chi+E,\qquad \|E\|_{\ell^1(I)}
\leq H\exp(-(\log X)^{4/5}).
\end{align*}
The $\ell^1$ approximation errors are absorbed by \eqref{eq:decomposition-strong-error} after increasing $X_0$. Equation \eqref{eq:twisted-model-coefficient-bounds} and $q\leq R$ give
\begin{align*}
 \|b_\chi\|_{\TV(\N;q)}\leq2R,\qquad \sum_{d\leq Y}|a_\chi(d)|^2\ll Y(\log(2w))^2\leq2RY
\end{align*}
for every $Y\geq1$ and sufficiently large $X$. The coefficient $a$ satisfies the same second moment bound, so both model components are $((2R)^{-1},X^{1/3})$ Type I sums. We choose $\mathbf C_1=\mathbf C_1(\eps)$ sufficiently large to absorb the absolute constants in the three support ranges. Since $R\geq(\log X)^K$, increasing $K$ if necessary also gives
\begin{align*}
 (\log X)^{O_\eps(1)}\leq R^{\mathbf C_1},
 \qquad (\log(2w))^2\leq R,
\end{align*}
so the bounds for the number of components, coefficient mass, coefficient norms, and Type II second moment have the required form. The at most two model components are then Type I sums with parameter at least $R^{-\mathbf C_1}$. This proves \eqref{eq:decomposition-exact}--\eqref{eq:decomposition-TypeII-second-moment}.
\end{proof}

\subsection{Abelian correlations and major arcs}

The next lemma shows that a Type II correlation with a polynomial phase implies that the phase resembles $n^{iT}$ on residue classes. For large $|T|$, we use the high frequency estimate from Lemma~\ref{lem:growing-decomposition}. For small $|T|$, we remove the $n^{iT}$ factor to obtain the correlation of the original arithmetic function.

\begin{lemma}[Abelian Type II reduction]
\label{lem:abelian-typeII-reduction}
Let $d\geq1$ be an integer. There is $C_d\geq1$ with the following property.

Let $X,H\in\N$ satisfy $3\leq X$ and $2\leq H\leq X$, let $0<\delta<1/\log X$, and let $A_{II}^+\geq A_{II}^-\geq1$. Let $f$ be a $(\delta,A_{II}^-,A_{II}^+)$ Type II sum, let $P_0\colon\Z\to\R$ be a polynomial of degree at most $d$, let $\mathcal A\colon\N\to\C$ be an arithmetic function, let $P\subset(X,X+H]$ be a nonempty arithmetic progression, and let $T_0\geq1$. Suppose that
\begin{align}\label{eq:length-alternative}
 H>C_d\delta^{-C_d}\max(A_{II}^+,X/A_{II}^-)
\end{align}
and
\begin{align}
 \left|\sum_{X<n\leq X+H}f(n)\e(P_0(n))\right|^*
 &\geq\delta H,
 \label{eq:abelian-typeII-component-correlation}\\
 \left|\sum_{n\in P}\mathcal A(n)\e(P_0(n))\right|
 &\geq\delta H.
 \label{eq:abelian-typeII-original-correlation}
\end{align}
Assume moreover that
\begin{equation}
 \label{eq:abelian-typeII-high-frequency-excluded}
 \left|\sum_{X<n\leq X+H}f(n)n^{iT}\right|^*
 <\delta^{C_d}H
\end{equation}
whenever
\begin{align*}
 T_0X/H\leq|T|\leq
 C_d\delta^{-C_d}(X/H)^{d+1}.
\end{align*}
Then there is an arithmetic progression $P'\subset P$ such that
\begin{equation}
 \label{eq:abelian-typeII-major-arc-output}
 \textnormal{mod}(P')\leq C_d\delta^{-C_d}\textnormal{mod}(P),
 \qquad
 \left|\sum_{n\in P'}\mathcal A(n)\right|
 \geq\frac{\delta^{C_d}}{1+T_0}H.
\end{equation}
\end{lemma}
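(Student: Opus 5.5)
We will follow the abelian Type II argument of \cite{MSTT}, keeping track of polynomial dependence on $\delta$. The first step is to feed the hypothesis \eqref{eq:abelian-typeII-component-correlation} into the abelian Type II inverse theorem of \cite{MSTT} (the Type II theorem for polynomial phases, i.e. the step one case of their Type II analysis). This is legitimate because the length condition \eqref{eq:length-alternative} excludes the short range. The output we aim for is a modulus $q\leq C_d\delta^{-C_d}$ and a real $T$ with $|T|\leq C_d\delta^{-C_d}(X/H)^{d+1}$ such that, on each residue class $b\bmod q$, the phase has the form
\begin{equation*}
 \e(P_0(n))=n^{iT}\phi_b(n)\qquad\text{for }n\in(X,X+H],\ n\equiv b\pmod q,
\end{equation*}
where $\|\phi_b\|_{\TV}\leq C_d\delta^{-C_d}$. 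The polynomial bounds come from a Cauchy--Schwarz removal of $\beta$ followed by Vinogradov-type lemmas on the coefficients of $P_0$, with the size of $T$ read off from the top coefficients.

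Next we split according to the size of $|T|$. Suppose first that $|T|\geq T_0X/H$. Then $T$ lies in the range of \eqref{eq:abelian-typeII-high-frequency-excluded}. Inserting the factorisation into \eqref{eq:abelian-typeII-component-correlation}, splitting into residue classes modulo $q$ and removing $\phi_b$ by partial summation (\cite[Lemma 2.2(iii)]{MSTT}), we get a maximal sum of $f(n)n^{iT}$ on a progression of size $\gg\delta^{O_d(1)}H$. We then need to pass from a progression to intervals. For this we expand the residue condition modulo $q$ into additive characters, and use that a linear phase $\e(an/q)$ combined with $n^{iT}$ can again be absorbed, after a further splitting into classes, into maximal sums over intervals. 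Choosing $C_d$ large gives a contradiction with \eqref{eq:abelian-typeII-high-frequency-excluded}. I expect this transfer, namely matching the maximal sum over subprogressions in the inverse theorem with the form of the high frequency hypothesis, to be the main technical obstacle. If the maximal sum in \eqref{eq:abelian-typeII-high-frequency-excluded} is already taken over subprogressions, as the notation $|\cdot|^*$ suggests, this step becomes direct.

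In the remaining case $|T|<T_0X/H$ we work with \eqref{eq:abelian-typeII-original-correlation}. We write $P$ as the union of its intersections with residue classes modulo $\lcm(q,\textnormal{mod}(P))$, so the new modulus is at most $q\,\textnormal{mod}(P)$. Pigeonholing gives a class on which the sum of $\mathcal A(n)\e(P_0(n))=\mathcal A(n)n^{iT}\phi_b(n)$ is $\gg\delta^{O(1)}H$. The factor $n^{iT}$ has derivative $\ll |T|/X\leq T_0/H$, so it has total variation $O(1+T_0)$ on the progression. The product $n^{iT}\phi_b(n)$ therefore has total variation $\ll(1+T_0)\delta^{-C_d}$. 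Partial summation then produces a subprogression $P'\subset P$ with $\textnormal{mod}(P')\leq C_d\delta^{-C_d}\textnormal{mod}(P)$ and
\begin{equation*}
 \left|\sum_{n\in P'}\mathcal A(n)\right|\geq\frac{\delta^{C_d}}{1+T_0}H,
\end{equation*}
which is \eqref{eq:abelian-typeII-major-arc-output} after enlarging $C_d$.
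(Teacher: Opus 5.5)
Your proposal is correct and follows the paper's proof. The paper likewise applies the abelian Type II inverse theorem \cite[Theorem~4.2(iii)]{MSTT} to get $q$ and $T$ with $\|\e(P_0(\cdot))(\cdot)^{-iT}\|_{\TV(I;q)}\leq C_d\delta^{-C_d}$, uses the partial summation bound of \cite[Lemma~2.2(iii)]{MSTT} together with the high frequency hypothesis to force $|T|<T_0X/H$, and then absorbs $n^{iT}$ (of variation $O(T_0)$ on each residue class) into the bounded variation factor before doing residue-class partial summation on $P$. The progression-to-interval transfer you flagged as the main obstacle is not needed: $|\cdot|^*$ is defined as a supremum over arithmetic subprogressions, so that step is immediate, exactly as you anticipated.
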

\begin{proof}
We write $I=(X,X+H]\cap\Z$. We apply the abelian Type II inverse theorem of \cite[Theorem~4.2(iii)]{MSTT} to the polynomial phase $\e( P_0(n))$, viewed as a nilsequence on $\R/\Z$. After increasing $C_d$, condition \eqref{eq:length-alternative} excludes the length alternative in the theorem. Thus there exist an integer $q$ and a real number $T$ such that
\begin{align}
 \label{eq:abelian-phase-approximation}
 1\leq q\leq C_d\delta^{-C_d},\qquad
 |T|&\leq C_d\delta^{-C_d}(X/H)^{d+1},\notag\\
 \|e(P_0(\cdot))(\cdot)^{-iT}\|_{\TV(I;q)}&\leq C_d\delta^{-C_d}.
\end{align}
We denote $v(n)=e(P_0(n))n^{-iT}$. Then the partial summation bound of \cite[Lemma~2.2(iii), equation~(2.5)]{MSTT} gives
\begin{align}
 \label{eq:abelian-component-variation-transfer}
 \delta H&\leq\left|\sum_{n\in I}f(n)n^{iT}v(n)\right|^*
 \leq\|v\|_{\TV(I;q)}\left|\sum_{n\in I}f(n)n^{iT}\right|^*.
\end{align}
Combining \eqref{eq:abelian-phase-approximation} and \eqref{eq:abelian-component-variation-transfer}, and increasing $C_d$, we obtain
\begin{align}
 \label{eq:abelian-archimedean-correlation}
 \left|\sum_{n\in I}f(n)n^{iT}\right|^*\geq\delta^{C_d}H.
\end{align}
Now, by \eqref{eq:abelian-typeII-high-frequency-excluded}, we must have
\begin{align*}
|T|<T_0X/H.
\end{align*}

Write $u(n)=\e(P_0(n))=v(n) n^{iT}$. For each $b\pmod q$, set
\begin{align*}
 P_b=P\cap(b+q\Z).
\end{align*}
Applying summation by parts on each nonempty $P_b$, as in
\cite[proof of Lemma~2.2(iii)]{MSTT}, gives
\begin{align*}
 \left|\sum_{n\in P_b}\mathcal A(n)u(n)\right|
\leq\|u\|_{\TV(P_b)}\max_{y\geq 1}\left|\sum_{n\in P_b\cap[1,y]}\mathcal A(n)\right|.
\end{align*}
Summing over $b\pmod q$ then yields
\begin{align}
 \label{eq:abelian-residue-summation}
 \left|\sum_{n\in P}\mathcal A(n)u(n)\right|
 &\leq\sum_{b\pmod q}\|u\|_{\TV(P_b)}
\max_{y\geq 1}\left|\sum_{n\in P_b\cap [1,y]}\mathcal A(n)\right|\notag\\
&\leq\|u\|_{\TV(P;q)}\max_{y\geq 1}\max_{b\pmod q}\left|\sum_{n\in P_b\cap [1,y]}\mathcal A(n)\right|.
\end{align}
Note that by the mean value theorem and the definition of total variation norm, for any $b\pmod q$ we have
\begin{align}\label{eq:abelian-phase-variation}\begin{aligned}
\|u\|_{\TV(P_b)}&\leq \|v\|_{\TV(P_b)}+\sum_{n,n'\in P_b \,\,\textnormal{consecutive}}|(n')^{iT}-n^{iT}|\\
&\leq \|v\|_{\TV(P_b)}+|T|\log (1+H/X).
\end{aligned}
\end{align}
Recalling that $P_b$ for $b\pmod q$ form a partition of $P$ and summing over $b\pmod q$, we obtain
\begin{align}
\label{eq}
\|u\|_{\TV(P;q)}&\leq \|v\|_{\TV(P;q)}+q|T|\log(1+H/X)\ll C_d\delta^{-C_d}(1+T_0),
\end{align}
where we used $q\leq C_d\delta^{-C_d}$, $|T|<T_0X/H$, and $\log(1+H/X)\leq H/X$.

Now, combining \eqref{eq:abelian-typeII-original-correlation}, 
\eqref{eq:abelian-residue-summation} and \eqref{eq:abelian-phase-variation}, and increasing $C_d$, we obtain a
nonempty initial segment $P'$ of some $P_b$ such that
\begin{align*}
 \left|\sum_{n\in P'}\mathcal A(n)\right|
 \geq\frac{\delta^{C_d}}{1+T_0}H.
\end{align*}
If $P'$ contains at least two elements, then $P_b$, and hence $P'$, has
modulus
\[
 \lcm(q,\textnormal{mod}(P))
 \leq q\,\textnormal{mod}(P)
 \leq C_d\delta^{-C_d}\textnormal{mod}(P).
\]
If $P'$ is a singleton we take the modulus to be $1$. This proves
\eqref{eq:abelian-typeII-major-arc-output}.
\end{proof}

We finally give an estimate for sums of $\Lambda-\widetilde\Lambda_w$ on short progressions that we will use in the next section to rule out the conclusion~\eqref{eq:abelian-typeII-major-arc-output} of Lemma~\ref{lem:abelian-typeII-reduction}. The argument is related to Huxley's proof of the prime number theorem in short intervals, but the uniformity required here in both the saving and the progression modulus  requires a modification of that proof.

\begin{proposition}[Short major arcs with the Siegel correction]
\label{prop:short-major-arcs}
Let $\eps>0$. There is $X_0=X_0(\eps)\geq3$ such that the following holds. 

Let $X\geq X_0$, let $I\subset[X,3X]\cap\Z$ be an interval of cardinality $H\geq X^{7/12+\eps}$, and let $2\leq w\leq\exp((\log X)^{1/10})$. Let $P\subset I$ be a nonempty arithmetic progression of modulus $r\leq w$. Then
\begin{equation}
 \label{eq:short-major-arc-progression-bound}
 \left|\sum_{n\in P}(\Lambda(n)-\widetilde\Lambda_w(n))\right|\leq H\exp(-(\log X)^{1/10}).
\end{equation}
If no level $w$ exceptional pair exists, or if its conductor $q_\beta$ does not divide $r$, then
\begin{equation}
 \label{eq:short-major-arc-plain-progression-bound}
 \left|\sum_{n\in P}(\Lambda(n)-\Lambda_w(n))\right|\leq H\exp(-(\log X)^{1/10}).
\end{equation}
\end{proposition}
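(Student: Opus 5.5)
We reduce to sums over residue classes and then to short-interval character sums. Write $P=\{n\in I: n\equiv b\pmod r\}\cap(u,u+H']$ for some subinterval. If $(b,r)>1$, the contribution of $\Lambda$ is supported on prime powers dividing $r$, hence is $O(\log X\cdot\log X)$. The model $\Lambda_w$ vanishes when $(b,r)>1$, because $r\le w$ forces every prime factor of $r$ to be at most $w$. So we may assume $(b,r)=1$. Orthogonality of characters gives
\begin{align*}
\sum_{n\in P}\Lambda(n)=\frac{1}{\varphi(r)}\sum_{\psi\bmod r}\overline{\psi}(b)\sum_{u<n\leq u+H'}\Lambda(n)\psi(n).
\end{align*}
For the model we first use Lemma~\ref{lem:local-model-typeI} to replace $\Lambda_w$ and $\Lambda_w\chi n^{\beta-1}$ by the Type I sums $a*1$ and $a_\chi*b_\chi$, with admissible $\ell^1$ error. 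Since $a$ is supported on $d\le X^{1/3}$ and $P$ has length at least $X^{7/12}$, the count $\sum_{d}a(d)\#\{m: dm\in P\}$ can be evaluated directly. This gives the main term $H'/\varphi(r)$, up to the sieve error already bounded in that lemma. In the twisted case, the same evaluation with $b_\chi(m)=\chi(m)m^{\beta-1}$ produces $\varphi(r)^{-1}\overline{\chi}(b)\int_u^{u+H'}t^{\beta-1}\,dt$ when $q_\beta\mid r$, and a negligible quantity otherwise, by summing $\chi$ over residue classes modulo $q_\beta$ with partial summation.

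It therefore suffices to prove that, for every character $\psi$ modulo $r$ and every interval of length $H'\le H$,
\begin{align*}
\sum_{u<n\leq u+H'}\Lambda(n)\psi(n)=\1_{\psi=\psi_0}H'-\1_{\psi\ \text{induced by}\ \chi}\int_u^{u+H'}t^{\beta-1}\,dt+O\bigl(H\exp(-2(\log X)^{1/10})\bigr).
\end{align*}
Lengths $H'\le H\exp(-2(\log X)^{1/10})$ are trivial. For longer intervals we use the truncated explicit formula for $\psi(x,\psi^*)$ with height $T_1=X/H\cdot\exp(3(\log X)^{1/10})$ and pass to primitive characters, at a cost of $O(\log^2 X)$. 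The difference over the interval equals
\begin{align*}
-\sum_{\rho:\,|\gamma|\leq T_1}\frac{(u+H')^\rho-u^\rho}{\rho}+O\bigl(X\log^2X/T_1\bigr).
\end{align*}
The error term is acceptable. Each zero contributes $\ll H X^{\beta_\rho-1}$. The exceptional zero $\beta$ of $\chi$ is exactly the Siegel term, and by the Landau--Page theorem it occurs only once among all $\psi$ modulo $r\le w$.

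The main obstacle is bounding the sum over all the remaining zeros uniformly in $r\le w$. We need
\begin{align*}
\sum_{\psi\bmod r}\ \sideset{}{'}\sum_{|\gamma|\le T_1}X^{\beta_\rho-1}\ll\exp\bigl(-3(\log X)^{1/10}\bigr).
\end{align*}
For this we use Huxley's zero density estimate $N(\sigma,T,\psi)\ll (qT)^{(12/5)(1-\sigma)}\log^{C}(qT)$, summed over the at most $w$ characters. We combine it with the Vinogradov--Korobov zero-free region $\sigma>1-c(\log X)^{-2/3-o(1)}$, valid for non-exceptional zeros of conductor at most $\exp((\log X)^{1/10})$, using the region $\sigma\ge 1-c_0/\log w$ from the Landau--Page theorem for small $|\gamma|$. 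Since $T_1\le X^{5/12-\eps/2}$, we have $(wT_1)^{12/5}\le X^{1-\eps}$. Splitting $\sigma$ dyadically, the sum is then at most
\begin{align*}
\log^C X\,\max_{\sigma}X^{-\eps(1-\sigma)}\le\log^C X\exp\bigl(-c\eps(\log X)^{1/3-o(1)}\bigr),
\end{align*}
which beats $\exp(-3(\log X)^{1/10})$. The hypothesis $H\ge X^{7/12+\eps}$ enters precisely through the Huxley exponent.

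For \eqref{eq:short-major-arc-plain-progression-bound}, note that if no exceptional pair exists the two models agree. If $q_\beta\nmid r$, then $\chi$ induces no character modulo $r$, so the Siegel term does not appear in the prime sum. The Siegel part of the model then contributes only the negligible non-divisible case from the model evaluation above. This gives the same bound with $\Lambda_w$ in place of $\widetilde\Lambda_w$.
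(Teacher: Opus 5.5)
Your proposal is correct and follows essentially the same route as the paper: character orthogonality, the truncated explicit formula, a $12/5$ zero-density estimate combined with the Vinogradov--Korobov zero-free region (with Landau--Page used to isolate the single selected zero), and a sieve evaluation of the model in which the Siegel term matches through the primitivity of $\chi$. The differences are cosmetic. You truncate at height $(X/H)\exp(3(\log X)^{1/10})$ instead of the paper's fixed $X^{5/12-\eps/4}$. You also evaluate the model through the sieve weights of Lemma~\ref{lem:local-model-typeI} rather than sieving each reduced class modulo $\lcm(r,q_\beta)$ directly; note that this requires the fundamental lemma with densities $\1_{p\nmid r}/p$, not merely the error bound recorded in that lemma.
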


\begin{proof}
It suffices to consider $0<\eps\leq1/12$. We set
\begin{align*}
 y=\min P-1,\qquad Y=\operatorname{span}(P), \qquad J=y+[Y].
\end{align*}
Then $J\subset I$, $Y\leq H$, and $P=J\cap (a+r\Z)$ for some $a$. If $Y<X^{7/12+\eps/2}$, the pointwise bounds $\Lambda(n)\leq\log(3X)$, $\Lambda_w(n)\ll\log(2w)$ and $|\widetilde\Lambda_w(n)|\ll\log(2w)$ give
\begin{align}
 \label{eq:major-arc-short-hull}
\sum_{n\in P}\bigl(\Lambda(n)+\Lambda_w(n)+|\widetilde\Lambda_w(n)|\bigr)\ll  Y\log X\leq HX^{-\eps/2}\log X\ll_\eps H\exp(-2(\log X)^{1/10}).
\end{align}
This proves~\eqref{eq:short-major-arc-progression-bound} and~\eqref{eq:short-major-arc-plain-progression-bound} in that case. We may henceforth assume $Y\geq X^{7/12+\eps/2}$. 

If $(a,r)>1$, there is a prime $p\mid(a,r)$, and since $p\leq r\leq w$, both models vanish on $P$, and only powers of $p$ can contribute to the von Mangoldt sum. Their total contribution is $O((\log X)^2)$, which is certainly $O_\eps(Y\exp(-2(\log X)^{1/10}))$. We may therefore also assume $(a,r)=1$. If the level $w$ exceptional pair $(\beta,\chi)$ exists and $q_{\beta}$ is the exceptional conductor, we write $d=q_\beta$; otherwise we set $d=1$ and remove all terms involving $\chi$ and $\beta$. We will snow estimate the sums of $\Lambda$ and the model functions separately and show that the exceptional term in the von Mangoldt sum agrees with the correction in $\widetilde\Lambda_w$, and that this correction term vanishes when $d\nmid r$.

\emph{The von Mangoldt sum.} We set
\begin{align*}
 T=X^{5/12-\eps/4},\qquad r_*=\max\{r,2\}.
\end{align*}
Since $r_*=X^{o(1)}$, for all sufficiently large $X$, we have
\begin{equation}
 \label{eq:major-arc-density-range}
 (2r_*T)^{12/5+\eps/2}\leq X^{1-\eps/4},\qquad
 X/T\leq YX^{-\eps/4}.
\end{equation}

For each character $\psi\pmod r$, we write $\psi^*$ for the primitive character inducing it. Let $N_r(\sigma,V)$ be the number of nontrivial zeros of 
$$\prod_{q\mid r}\,\,\,\prod_{\psi^*\pmod q\,\,\textnormal{primitive}}L(s,\psi^*),$$ with multiplicity, having real part at least $\sigma$ and imaginary part of absolute value at most $V$. The log-free zero density estimate (see e.g. \cite[Theorem~2.1, equation~(2.1)]{TZ}) gives
\begin{equation}
 \label{eq:major-arc-zero-density}
 N_r(\sigma,V)\ll_\eps(r_*V)^{(12/5+\eps/2)(1-\sigma)}
 \quad\text{for }0\leq\sigma\leq1\text{ and }V\geq1.
\end{equation}
Since $\log r\leq \log w\leq (\log X)^{1/10}$, the Vinogradov--Korobov zero-free region gives for any zero $\rho$ of height at mist $2T$ the bound
\begin{align}
 \label{eq:major-arc-zero-free-width}
 \operatorname{Re}\rho\leq1-\eta,\qquad
 \eta\asymp(\log X)^{-2/3}(\log\log X)^{-1/3},
\end{align}
apart from a possible exceptional real zero. Every primitive conductor in the family divides $r$ and is at most $w$. A real zero not selected by the level $w$ convention satisfies $1-\operatorname{Re}\rho>c_0/\log w>\eta$ for sufficiently large $X$. Thus we need to remove only the selected exceptional zero. When the exceptional pair exists, the selected zero occurs in the family exactly when $d\mid r$.

We will use an asterisk on a sum over zeros to mean that the exceptional zero is omitted. By partial summation, with \eqref{eq:major-arc-density-range}--\eqref{eq:major-arc-zero-free-width}, we have
\begin{align}
 \sum_{\psi\bmod r}\ \asum_{\substack{\rho\colon L(\rho,\psi^*)=0\\|\operatorname{Im}\rho|\leq2T}}X^{\operatorname{Re}\rho-1}
 &\ll_\eps X^{-\eps/4}+(\log X)\int_0^{1-\eta}X^{-\eps(1-\sigma)/4}\,\mathrm d\sigma\notag\\
 &\ll_\eps X^{-\eps\eta/4}\ll_\eps \exp(-2(\log X)^{1/10}),
 \label{eq:major-arc-zero-sum}
\end{align}
where we used $\eta\log X\gg_\eps(\log X)^{1/3}(\log\log X)^{-1/3}$ for the last inequality.

We now apply character orthogonality in the form
\begin{align}\label{eq:orthogonal}
 \sum_{n\in P}\Lambda(n)
 =\frac1{\varphi(r)}\sum_{\psi\bmod r}\overline{\psi(a)}
       \sum_{n\in J}\Lambda(n)\psi(n).
\end{align}
We evaluate each inner sum using the truncated explicit formula \cite[Chapter~17, (9)--(11), and Chapter~19, (7)--(8)]{Davenport}. For each primitive character, we choose a height $T_{\psi^{*}}$ in $[T,2T]$ separated from its zero ordinates by at least $1/(\log T)^2$ and subtract the explicit formulas at $y+Y$ and $y$. Thus, writing $\psi^*$ for the primitive character inducing $\psi$, we have
\begin{align}\label{eq:explicit-formula}
 \sum_{y<n\leq y+Y}\Lambda(n)\psi^*(n)
 =\1_{\psi^*=1}Y
 -\sum_{\substack{\rho\colon L(\rho,\psi^*)=0\\|\operatorname{Im}\rho|\leq2T_{\psi^{*}}}}
   \frac{(y+Y)^\rho-y^\rho}{\rho}
 +O\bigl((X/T+X^{1/2})(\log X)^2\bigr).
\end{align}
Each nonexceptional zero $\rho$ contributes a term bounded by
\begin{align}
 \label{eq:major-arc-zero-integral}
 \left|\frac{(y+Y)^\rho-y^\rho}{\rho}\right|
 =\left|\int_y^{y+Y}t^{\rho-1}\,\mathrm dt\right|
 \ll YX^{\operatorname{Re}\rho-1}.
\end{align}
Passing from a primitive character to the induced character changes only prime powers with prime divisor of $r$, of total weight $O((\log X)^2)$ for each character. After averaging over the characters modulo $r$, the total contribution of the error term in~\eqref{eq:explicit-formula}, the nonexceptional zeros, and these prime powers is contribute at most
\begin{align}
 \label{eq:major-arc-averaged-remainder}
 &\frac{Y}{\varphi(r)}\sum_{\psi\bmod r}\,\,
    \asum_{\substack{\rho\colon L(\rho,\psi^*)=0\\|\operatorname{Im}\rho|\leq2T}}
         X^{\operatorname{Re}\rho-1}
   +(X/T+X^{1/2})(\log X)^2
 \ll_\eps Y\exp(-2(\log X)^{1/10}),
\end{align}
by \eqref{eq:major-arc-density-range} and \eqref{eq:major-arc-zero-sum}. The principal character gives the main term $Y/\varphi(r)$ to~\eqref{eq:orthogonal}. When the exceptional pair exists and $d\mid r$, the character induced by $\chi$ gives to~\eqref{eq:orthogonal} the exceptional term 
\begin{align*}
 -\frac{\chi(a)}{\varphi(r)}\frac{(y+Y)^\beta-y^\beta}{\beta}=-\frac{\chi(a)}{\varphi(r)} \int_y^{y+Y}t^{\beta-1}\,\mathrm dt.
\end{align*}
Hence
\begin{equation}
 \label{eq:major-arc-Lambda-explicit}
 \sum_{n\in P}\Lambda(n)=\frac Y{\varphi(r)}
 -\frac{\1_{d\mid r}\chi(a)}{\varphi(r)}
       \int_y^{y+Y}t^{\beta-1}\,\mathrm dt+O_\eps(Y\exp(-2(\log X)^{1/10})).
\end{equation}

\emph{The model sums.} Recall that $P=J\cap (a+r\Z)$. For estimating the sums of the model functions, we split into residue classes modulo $\ell=\lcm(r,d)$ so that, when the exceptional pair exists, $\chi$ is constant on each residue class being sieved. We set the sifting level to be $D=\lfloor X^{1/4}\rfloor$. Every prime factor of $\ell$ is at most $w$, and $\ell\leq rw\leq w^2$. For any $u$ corpime to $\ell$, any positive integer $k$ coprime to $\ell$, and $0\leq t\leq Y$, the Chinese remainder theorem gives
\begin{align}
 \label{eq:major-arc-sieve-remainder}
 \#\{y<n\leq y+t:n\equiv u\pmod\ell,\ k\mid n\}
 =\frac{t}{\ell k}+O(1).
\end{align}
We apply the fundamental lemma of sieve theory \cite[Lemma~6.3]{IK} to the primes $p\leq w$ not dividing $\ell$, with level $D$, as in \cite[proof of Proposition~4.5]{MTW}. Since $\log D/\log w\gg(\log X)^{9/10}$, equation \eqref{eq:major-arc-sieve-remainder} gives,  for $0\leq t\leq Y$,
\begin{align}
 \label{eq:major-arc-model-unit-class}
 \sum_{\substack{y< n\leq y+t\\ n\equiv u\pmod\ell}}\Lambda_w(n)
 =\frac{P(w)}{\varphi(P(w))}\prod_{\substack{p\leq w\\p\nmid\ell}}\left(1-\frac1p\right)\frac{t}{\ell}+O\left(\frac{t\exp(-2(\log X)^{1/10})}{\varphi(\ell)}+D\log(2w)\right).
\end{align}
The main term simplifies to $t/\varphi(\ell)$.
Summing the error terms over all coprime residue classes $u\pmod{\ell}$, with coefficients of absolute value at most one, gives a contribution of $O(Y\exp(-2(\log X)^{1/10})+D\ell\log(2w))=O_\eps(Y\exp(-2(\log X)^{1/10}))$, since $D\ell=X^{1/4+o(1)}$.

Since $r\mid \ell$ and $(a,r)=1$, we can write the residue class $a\pmod r$ defining $P$ as a union of $\varphi(\ell)/\varphi(r)$ reduced residue classes modulo $\ell$. When the exceptional pair exists, primitivity of $\chi$ and character orthogonality imply
\begin{align}
 \label{eq:major-arc-exceptional-fibre}
 \sum_{\substack{u\pmod\ell\\u\equiv a\pmod r\\(u,\ell)=1}}\chi(u)
 =\1_{d\mid r}\chi(a)\frac{\varphi(\ell)}{\varphi(r)}.
\end{align}
We sum \eqref{eq:major-arc-model-unit-class} over the residue classes $u\pmod \ell$ corresponding to $a\pmod r$, first with coefficient $1$ and then, when the exceptional pair exists, with coefficient $\chi(u)$. This gives
\begin{align}
 \sum_{\substack{y<n\leq y+t\\n\equiv a\pmod r}}\Lambda_w(n)
 &=\frac{t}{\varphi(r)}+O_\eps(Y\exp(-2(\log X)^{1/10})),\notag\\
 \sum_{\substack{y<n\leq y+t\\n\equiv a\pmod r}}\Lambda_w(n)\chi(n)
 &=\frac{\1_{d\mid r}\chi(a)t}{\varphi(r)}+O_\eps(Y\exp(-2(\log X)^{1/10})),
 \label{eq:major-arc-model-initial-sums}
\end{align}
for $0\leq t\leq Y$. Using partial summation, we can add the weight $n^{\beta-1}$ in the second sum, while keeping the same error term.
Thus, taking $t=Y$, we obtain
\begin{align}
 \sum_{n\in P}\Lambda_w(n)
 &=\frac Y{\varphi(r)}+O_\eps(Y\exp(-2(\log X)^{1/10})),\notag\\
\sum_{n\in P}\Lambda_w(n)\chi(n)n^{\beta-1}&=\frac{\1_{d\mid r}\chi(a)}{\varphi(r)}\int_y^{y+Y}t^{\beta-1}\,\mathrm dt+O_\eps(Y\exp(-2(\log X)^{1/10})).
 \label{eq:model-local-exceptional}
\end{align}
Comparing \eqref{eq:model-local-exceptional} with \eqref{eq:major-arc-Lambda-explicit}, and using the definition of $\widetilde\Lambda_w$, we conclude that
\begin{align}
 \label{eq:major-arc-residue-cancellation}
 \sum_{n\in P}\bigl(\Lambda(n)-\widetilde\Lambda_w(n)\bigr)
 =O_\eps(Y\exp(-2(\log X)^{1/10})).
\end{align}
If no exceptional pair exists, or if $d\nmid r$, the exceptional main term in~\eqref{eq:model-local-exceptional} vanishes, so the same estimate holds with $\widetilde\Lambda_w$ replaced by $\Lambda_w$.

We have now shown in any case that the sums~\eqref{eq:short-major-arc-progression-bound} and~\eqref{eq:short-major-arc-plain-progression-bound} are $O_\eps(H\exp(-2(\log X)^{1/10}))$. Enlarging $X_0(\eps)$ makes this bound $\leq H\exp(-(\log X)^{1/10})$, proving \eqref{eq:short-major-arc-progression-bound} and \eqref{eq:short-major-arc-plain-progression-bound}.
\end{proof}

\section{Step descent and proofs of the main results}\label{sec:step-descent}

We first show in Proposition~\ref{prop:step-descent} that if the von Mangoldt minus its model correlates with a nilsequence with suitable quantitative bounds, then we can obtain a similar correlation bound on a nilmanifold of smaller step. We then use induction on this conclusion in Proposition~\ref{prop:large-cutoff-uniformity}.

\begin{proposition}[Step descent from nilsequence correlation]
\label{prop:step-descent}
Let $k\geq2$ be an integer and let $0<\eps<3/16$. There are constants $C\geq1$ and $X_0\geq3$, depending only on $k,\eps$, with the following property.

Let $X\geq X_0$, let $I=x+[H]\subset[X,3X]\cap\Z$, where $x,H\in\Z$ and
\[
 X^{5/8+\eps}\leq H\leq X^{1-\eps},
\]
and let $2\leq w\leq\exp((\log X)^{1/10})$. Let $D\geq2$ be an integer and
\[
 R\geq\max\{D,2,(\log X)^C\},\qquad
 Q=R^{(D+2)^C},
 \qquad Q^C\leq w.
\]
Let $G/\Gamma$ be an $s$-step filtered nilmanifold of degree at most $k-1$, dimension at most $D$ and complexity at most $R$, where $1\leq s\leq k-1$. Let $g\colon\Z\to G$ be polynomial and let $F\colon G/\Gamma\to\C$ be $1$-bounded and $R$-Lipschitz.

Choose $\Lambda_w^\dagger$ in one of the following two ways:
\begin{enumerate}[label=\textnormal{(\alph*)}]
 \item $\Lambda_w^\dagger=\widetilde\Lambda_w$, and the level $w$ exceptional conductor, if it exists, is at most $R$;
 \item $\Lambda_w^\dagger=\Lambda_w$, and the level $w$ exceptional conductor, if it exists, is larger than $Q$.
\end{enumerate}
If
\begin{align}\label{eq:maximalF}
 \left|\sum_{n\in I}(\Lambda-\Lambda_w^\dagger)(n)F(g(n)\Gamma)\right|^*
 \geq H/R,
\end{align}
then there are a filtered nilmanifold $G'/\Gamma'$ of degree at most $k-1$, dimension at most $D$, complexity at most $Q$ and step at most $s-1$, a polynomial sequence $g'\colon\Z\to G'$, and a $1$-bounded $Q$-Lipschitz function $F'\colon G'/\Gamma'\to\C$ such that
\[
 \left|\sum_{n\in I}(\Lambda-\Lambda_w^\dagger)(n)F'(g'(n)\Gamma')\right|^*\geq H/Q.
\]
\end{proposition}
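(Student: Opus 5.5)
We begin with Lemma~\ref{lem:vertical-reduction}, applied to the $1$-bounded function $(\Lambda-\Lambda_w^\dagger)/(2\log X)$ restricted to the progression $P_0\subset I$ attaining the maximal sum in~\eqref{eq:maximalF}; this costs only a factor $\log X\leq R$ in the correlation. In the zero frequency case we already have a nilmanifold of step at most $s-1$ with complexity and Lipschitz bounds $(R^2)^{D^{C_k}}\leq Q$, and we are done. So we may assume $G^{(s)}$ is one-dimensional and $F$ is a vertical nilcharacter of nonzero frequency $\xi$ with $|\xi|\leq R^{D^{O(1)}}$.

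Next we apply Lemma~\ref{lem:growing-decomposition} with $A$ large in terms of $k,\eps$ and with the envelope parameter $R_1=R^{D^{C'}}$. In case (a) the exceptional conductor is at most $R\leq R_1$. In case (b) we use $\Lambda_w$ directly. The number of components and the coefficient mass are $R_1^{\mathbf C_1}$, so pigeonholing gives a component $f=f_\nu$ with
\[
\left|\sum_{n\in I}f(n)F(g(n)\Gamma)\right|^*\geq H R_1^{-2\mathbf C_1}.
\]
If $f$ is Type I, we apply Proposition~\ref{prop:typeI-standalone}. The range alternative $H\leq\mathcal R X^{5/8}$ is excluded because $\mathcal R\leq\exp((\log X)^{1/10})\ll X^{\eps}$. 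If $f$ is Type $I_2$, Proposition~\ref{prop:typeI2-standalone} applies, and its range alternative is excluded since $X^{1/3}(X^{7/16})^{2/3}=X^{5/8}$. If $f$ is Type II with $s\geq2$, Proposition~\ref{cor:efficient-typeII-inverse} applies after dyadically splitting $\alpha$ on $[X^{3/8}/\mathbf C_1,\mathbf C_1X^{5/8}]$; here $\max\{A,X/A\}\ll X^{5/8}$ excludes the short alternative. In each case we obtain a factorisation $g=\varepsilon g_*\gamma$ on $I$ through $G_*$ of step at most $s-1$, with all bounds at most $Q_1=R^{D^{O(1)}}$ and $\gamma(\cdot)\Gamma$ of period $q\leq Q_1$.

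\textbf{Transfer step.} Given such a factorisation, we return to the original correlation~\eqref{eq:maximalF}. We split $P_0$ into subprogressions that are residue classes mod $q$ and have length $H/Q_1^{C}$. On each such piece $\gamma(n)\Gamma=\gamma_0\Gamma$ is constant, and by smoothness $\varepsilon(n)$ varies by $O(Q_1^{-C+1})$. Pigeonholing gives a piece $P'$ on which
\[
\left|\sum_{n\in P'}(\Lambda-\Lambda_w^\dagger)(n)F(\varepsilon_0g_*(n)\gamma_0\Gamma)\right|\gg H Q_1^{-2C}.
\]
We then write $F(\varepsilon_0 g_*(n)\gamma_0\Gamma)=F'(g'(n)\Gamma')$ with $G'=\gamma_0^{-1}G_*\gamma_0$, $\Gamma'=G'\cap\Gamma$, $g'=\gamma_0^{-1}g_*\gamma_0$ and $F'(x)=F(\varepsilon_0\gamma_0x)$. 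The rationality of $\gamma_0$ bounds the complexity of $G'/\Gamma'$ and the Lipschitz norm of $F'$ by $Q_1^{O(D^{O(1)})}\leq Q=R^{(D+2)^C}$. Since $P'\subset I$ is a progression, the maximal sum bound follows.

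\textbf{Remaining case.} The remaining case is Type II with $s=1$, which is the main obstacle. Here $F(g(n)\Gamma)=b_0\e(P_0(n))$ after a rational change of coordinates on the torus. We apply Lemma~\ref{lem:abelian-typeII-reduction} with $\mathcal A=\Lambda-\Lambda_w^\dagger$, $\delta=R_1^{-O(1)}$, $T_0=R_1^{K}$ and the progression $P_0$. The high-frequency hypothesis is supplied by~\eqref{eq:decomposition-high-T}, choosing $A$ large enough that $X^A$ exceeds $(X/H)^{k}\delta^{-C}$ and $R_1^{-A}<\delta^{C_d}$. The lemma then produces a progression $P'$ of modulus at most $R_1^{O(1)}\textnormal{mod}(P_0)$ on which $|\sum_{n\in P'}(\Lambda-\Lambda_w^\dagger)(n)|\geq HR_1^{-O(K)}$.

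This conclusion is itself a correlation with the constant (step $0$) nilsequence on the one-point nilmanifold, which is exactly the required output with $s-1=0$. Alternatively it is ruled out by Proposition~\ref{prop:short-major-arcs}: this needs $\textnormal{mod}(P')\leq w$, which is where $Q^C\leq w$ is used. In case (b) it also needs the exceptional conductor, which exceeds $Q$, not to divide the modulus, and this gives~\eqref{eq:short-major-arc-plain-progression-bound}. The modulus of $P_0$ may be large, so the bookkeeping must first replace $P_0$ by full residue classes of bounded modulus. Either way the step-$0$ output stated in the proposition follows.
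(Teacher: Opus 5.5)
Your proposal is correct. Outside the abelian Type II case it follows the paper's argument: vertical reduction via Lemma~\ref{lem:vertical-reduction}, the decomposition of Lemma~\ref{lem:growing-decomposition} with a growing envelope, Propositions~\ref{prop:typeI-standalone}, \ref{prop:typeI2-standalone} and~\ref{cor:efficient-typeII-inverse} to obtain a lower-step factorisation, and freezing $\varepsilon$ and $\gamma$ on short pieces of $P_0$. Your lattice $\gamma_0^{-1}G_*\gamma_0\cap\Gamma$ is the paper's $G_*\cap\gamma_0\widehat\Gamma\gamma_0^{-1}$ transported by conjugation.

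The genuine difference is how you finish when $\widehat G$ is abelian. The paper feeds the output of Lemma~\ref{lem:abelian-typeII-reduction} into Proposition~\ref{prop:short-major-arcs} and derives a contradiction, so this case never occurs. For that it needs $\textnormal{mod}(P_1)\le Q\le w$ and, in case (b), that the exceptional conductor exceeds $Q$. You instead observe that here $s=1$, so the required output has step $0$. The large sum $\bigl|\sum_{n\in P'}(\Lambda-\Lambda_w^\dagger)(n)\bigr|\ge HR_1^{-O(K)}\ge H/Q$, with $P'\subset I$, is then exactly the conclusion for the one-point nilmanifold with $F'\equiv1$.

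This is legitimate. Definition~\ref{def:step} allows step zero, and the zero-frequency case of the vertical reduction already produces such outputs when $s=1$. It is also more economical: neither the major-arc estimate, nor the modulus bookkeeping, nor the conductor hypothesis in (b) is used inside this proposition. The contradiction is deferred to the end of the proof of Proposition~\ref{prop:large-cutoff-uniformity}, which applies Proposition~\ref{prop:short-major-arcs} to the final step-zero correlation anyway. The paper's route buys only the extra information that every descent step comes from a genuine factorisation.

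Two small corrections.

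First, \eqref{eq:abelian-typeII-high-frequency-excluded} is a maximal sum over all subprogressions of the interval. By contrast, \eqref{eq:decomposition-high-T} only treats progressions of span at least $HR_1^{-A}$ and modulus at most $R_1^{A}$. The remaining progressions have at most $2HR_1^{-A}$ elements. You must dispose of them by Cauchy--Schwarz and the second moment bound \eqref{eq:decomposition-TypeII-second-moment}, as the paper does; this step is missing from your verification.

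Second, on your alternative route, $\textnormal{mod}(P_0)$ is not large. The summands are $O(\log X)$ and the sum over $P_0$ is at least $H/R$, so $|P_0|\gg H/(R\log X)$ and hence $\textnormal{mod}(P_0)\ll R\log X$. No preliminary replacement by full residue classes is needed.
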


\begin{proof}
All constants in this proof depend only on $k,\eps$. We choose $C$ sufficiently large in terms of these constants and increase $X_0$ when needed.

\emph{Vertical reduction.} Choose a progression $P\subset I$ attaining the maximum norm in~\eqref{eq:maximalF}. The estimate~\eqref{eq:Mertens} gives, for some absolute constant $C_0$, the bound 
\begin{align}\label{eq:coefficient-mass}
 |(\Lambda-\Lambda_w^\dagger)(n)|\leq C_0\log X\leq R\qquad\text{for every }n\in I.
\end{align}
We apply Lemma \ref{lem:vertical-reduction} on $I$ to $(\Lambda-\Lambda_w^\dagger)\1_P/(C_0\log X)$, with correlation parameter $1/(RC_0\log X)\geq 1/R^2$. In the zero frequency case, we obtain a nilsequence of step at most $s-1$, complexity and Lipschitz norm at most $S$, and correlation at least $H/S$ on $P$, where 
$$S=R^{(D+2)^{c_1}},$$ 
where $c_1=c_1(k)$ is sufficiently large.  Since $S\leq Q$, this proves the proposition in that case.

In the nonzero frequency case, we obtain a quotient nilmanifold $\widehat G/\widehat\Gamma$ of step $s$, degree at most $k-1$ and dimension at most $D$, with $(\widehat G)^{(s)}$ one-dimensional. We also obtain
a polynomial $\widehat g$ and a $1$-bounded vertical nilcharacter $\widehat F$ with nonzero frequency $\xi$, with all complexity, Lipschitz and frequency bounds at most $S$, and
\begin{equation}
 \label{eq:descent-selected-vertical-correlation}
 \left|\sum_{n\in P}(\Lambda-\Lambda_w^\dagger)(n)\widehat F(\widehat g(n)\widehat\Gamma)\right|\geq H/S.
\end{equation}

\emph{Arithmetic decomposition.} 
Let $\tilde{c}$ be the exponent $\mathbf C_1$ in Lemma \ref{lem:growing-decomposition}. We choose $a=a(k,\eps)$ sufficiently large (depending on $\tilde{c}$ in particular) and set $\delta=S^{-a}$. We then choose $A_0=A_0(k,\eps,a)$ sufficiently large. Since
\begin{align*}
 R\geq(\log X)^C,\qquad
 \log S\leq\frac{(\log X)^{1/10}}{C},
\end{align*}
choosing $C$ sufficiently large makes the Lemma \ref{lem:growing-decomposition} applicable with $A=A_0$ and growing parameter $S$.
In case~\textnormal{(a)} of the current statement, we use Lemma \ref{lem:growing-decomposition} to decompose $\Lambda-\widetilde\Lambda_w$, and the exceptional conductor, if it exists, is at most $R\leq S$. In case~\textnormal{(b)}, we use the lemma to decompose $\Lambda-\Lambda_w$, for which the lemma requires no condition on the exceptional conductor. In either case we obtain a decomposition
\begin{align*}
 (\Lambda-\Lambda_w^\dagger)(n)=\sum_\nu c_\nu f_\nu(n)+E_I(n)
 \qquad\text{for every }n\in I,
\end{align*}
with
\begin{align}\label{eq:coefficient-mass2}
  \sum_\nu|c_\nu|\leq S^{\tilde c},\qquad
 \|E_I\|_{\ell^1(I)}
 \leq H\exp\bigl(-c(\log X)^{1/10}\bigr)
 \leq \frac{H}{2S},
\end{align}
where the last inequality follows after increasing $C$. Moreover, $f_{\nu}$ is either a $(S^{-\tilde{c}},\tilde{c}X^{5/8})$ Type $I$ sum or a $(S^{-\tilde{c}},\tilde{c}X^{7/16})$ Type $I_2$ sum or a $(S^{-\tilde{c}},\tilde{c}^{-1}X^{3/8},\tilde{c}X^{5/8})$ Type $II$ sum, and for the Type II components we also have
\begin{align}\label{eq:coefficient-mass3}
\sup_{\nu}\sum_{n\in I}|f_{\nu}(n)|^2\leq HS^{\tilde{c}}.
\end{align}
Using the triangle inequality and the bound~\eqref{eq:coefficient-mass2}, applied to \eqref{eq:descent-selected-vertical-correlation}, we produce a component $f_\nu$ satisfying
\begin{equation}
 \label{eq:dichotomy-selected-component}
 \left|\sum_{n\in I}f_\nu(n)\widehat F(\widehat g(n)\widehat\Gamma)\right|^*\geq\frac{H}{2S^{\tilde{c}+1}}.
\end{equation}

\emph{Factorisation in the Type I, Type $I_2$ and non-abelian Type II cases.} Note that by assumption we have 
\begin{equation}
 \label{eq:dichotomy-analytic-scale-comparison}
 X/2\leq x\leq3X,\qquad H\leq x^{1-\eps/2},
\end{equation}
Set $B=S^{(D+2)^{c_2}}$, where $c_2=c_2(k,\eps)$ is to be chosen sufficiently large.  Then we have $B\leq Q\leq x^{\eps/4000}$ for sufficiently large $X$. 

Since $\delta=S^{-a}\leq 1/(2S^{\tilde{c}+1})$ by the assumption that $a$ is large in terms of $\tilde{c}$, the right-hand side of~\eqref{eq:dichotomy-selected-component} is at least $\delta H$, and since $\delta\leq S^{-\tilde{c}}$, the component $f_{\nu}$ is of Type I, Type $I_2$ or Type II with parameter $\delta$ in Definition~\ref{def:type-sums}.

If $f_\nu$ is Type I, we apply Proposition \ref{prop:typeI-standalone} to $f_{\nu}$ and $\widehat F(\widehat g(\cdot)\widehat\Gamma)$, with parameters $\delta, S$. If $f_\nu$ is Type $I_2$, we apply Proposition \ref{prop:typeI2-standalone} with the same parameters. If $f_\nu$ is Type II and $\widehat G$ is non-abelian, we restrict the first convolution factor in $f_{\nu}$ to a dyadic interval $(V,2V]\subset [\frac{1}{2}\tilde{c}^{-1}X^{3/8},2\tilde{c}X^{5/8}]$ while retaining a correlation of size at least $\delta H/(2\log X)\geq \delta^2 H$, and then apply Proposition \ref{cor:efficient-typeII-inverse} with correlation parameter $\delta^2$, with the frequency hypothesis for the vertical character following from $|\xi|\leq S$. In all three cases, the inverse parameter given by these propositions is at most
\[
 S^{(D+2)^{O_{k,\eps}(1)}}\leq B
\]
after choosing $c_2$ sufficiently large. In each of the three cases, the length alternative
$$H\ll_{k,\eps}BX^{5/8}$$
is excluded since $H\geq X^{5/8+\varepsilon}$.
$B\leq Q\leq X^{\varepsilon/10}$ for sufficiently large $X$. Thus each of these cases gives a $B$-factorisation
\begin{equation}
 \label{eq:descent-factorization}
 \widehat g=\varepsilon g_*\gamma
\end{equation}
on $I$ through a connected $B$-rational subgroup $G_*\leq\widehat G$ of step at most $s-1$, with induced nilmanifold complexity and period of $\gamma(\cdot)\widehat\Gamma$ at most $B$.

\emph{Excluding the abelian Type II case.} Suppose instead that $f_\nu$ is a $(S^{-\tilde{c}},\tilde{c}^{-1}X^{3/8},\tilde{c}X^{5/8})$ Type II sum and that $\widehat G$ is abelian. Then $s=1$ and $\widehat G/\widehat\Gamma$ is a one-dimensional torus, so
\begin{align*}
 \widehat F(\widehat g(n)\widehat\Gamma)=b_0\e(P_0(n)),
\end{align*}
where $P_0$ is a real polynomial of degree at most $k-1$ and $0<|b_0|\leq1$. Dividing by $b_0$ preserves the lower bounds in \eqref{eq:descent-selected-vertical-correlation} and \eqref{eq:dichotomy-selected-component}. Since $|\Lambda-\Lambda_w^\dagger|\leq R\leq S$, we have $|P|\geq H/S^2\geq2$ and, in the notation~\eqref{eq:modP} we have
\begin{align*}
 \textnormal{mod}(P)\leq\frac{H-1}{|P|-1}\leq2S^2.
\end{align*}
We will apply the abelian Type II reduction of Lemma \ref{lem:abelian-typeII-reduction} at scale $x$, with parameter $\delta$, function $f_\nu$, progression $P$ and $T_0=2S^K$, with $K$ sufficiently large. Then both hypotheses~\eqref{eq:abelian-typeII-component-correlation} and~\eqref{eq:abelian-typeII-original-correlation} hold. Let $b=C_{k-1}$ be the constant in
Lemma \ref{lem:abelian-typeII-reduction}.
The parameter and length hypotheses follow from $\delta=S^{-a}<1/\log x$ and
\begin{align*}
 b\delta^{-b}\max\{\tilde cX^{5/8},\tilde c xX^{-3/8}\}
 \ll_{k,\eps} S^{ab}X^{5/8}\ll HX^{-\varepsilon/10},
\end{align*}
say.

To verify~\eqref{eq:abelian-typeII-high-frequency-excluded}, suppose that there are a progression $P'\subset I$ and a real $T$ with
\begin{align}\label{eq:Tbounds}
 T_0x/H\leq|T|\leq b\delta^{-b}(x/H)^k,
\end{align}
such that
\begin{equation}
 \label{eq:dichotomy-high-branch-lower}
 \left|\sum_{n\in P'}f_\nu(n)n^{iT}\right|\geq HS^{-ab}.
\end{equation}
Recalling the notation~\eqref{eq:modP}, if $\operatorname{span}(P')<H/S^{A_0}$ or $\textnormal{mod}(P')>S^{A_0}$, then $|P'|\leq2H/S^{A_0}$. The second moment bound \eqref{eq:coefficient-mass3} and Cauchy--Schwarz give
\begin{align*}
 \sum_{n\in P'}|f_\nu(n)|\leq\sqrt2H S^{(\tilde{c}-A_0)/2}<HS^{-ab},
\end{align*}
contradicting \eqref{eq:dichotomy-high-branch-lower}. Otherwise, the final claim of Lemma~\ref{lem:growing-decomposition} is applicable to the progression $P'$, noting that by~\eqref{eq:Tbounds} we have
\begin{align*}
 |T|\geq2S^Kx/H\geq S^KX/H,\qquad |T|\leq  X^{A_0}.
\end{align*}
Thus \eqref{eq:decomposition-high-T} bounds~\eqref{eq:dichotomy-high-branch-lower}  by $HS^{-A_0}<HS^{-ab}$, a contradiction.

We have now verified~\eqref{eq:abelian-typeII-high-frequency-excluded}. Lemma~\ref{lem:abelian-typeII-reduction} now produces a progression $P_1\subset P$ such that
\begin{equation}
 \label{eq:dichotomy-major-branch-lower}
 \textnormal{mod}(P_1)
 \leq b\delta^{-b}\textnormal{mod}(P)\leq Q,\qquad
 \left|\sum_{n\in P_1}(\Lambda-\Lambda_w^\dagger)(n)\right|
 \geq\frac{HS^{-ab}}{1+2S^K}
 \geq HS^{-ab-K-2}.
\end{equation}
We have
\[
 \textnormal{mod}(P_1)\leq Q,\qquad
 Q\leq w^{1/C}\leq\exp((\log X)^{1/10}/C),\qquad
 S^{ab+K+3}\leq Q,
\]
on choosing $C$ large enough.
We may therefore apply the major arc Proposition \ref{prop:short-major-arcs} on $I$. In case~\textnormal{(a)}, we use \eqref{eq:short-major-arc-progression-bound}. In case~\textnormal{(b)}, the exceptional conductor, if it exists, is larger than \(Q\geq\textnormal{mod}(P_1)\), so it cannot divide \(\textnormal{mod}(P_1)\), and we use \eqref{eq:short-major-arc-plain-progression-bound}. In either case, we obtain
\begin{align*}
 \left|\sum_{n\in P_1}(\Lambda-\Lambda_w^\dagger)(n)\right|
 \leq H\exp(-(\log X)^{1/10})
 \ll_{k,\eps}HS^{-ab-K-3},
\end{align*}
which contradicts \eqref{eq:dichotomy-major-branch-lower} for sufficiently large $X$. Hence the abelian Type II case cannot occur, so the factorisation conclusion~\eqref{eq:descent-factorization} always holds.

\emph{A lower-step nilsequence.} Now we have the factorisation \eqref{eq:descent-factorization} and the correlation \eqref{eq:descent-selected-vertical-correlation} on the original progression $P$. Since $B\geq S$, we can replace the lower bound in \eqref{eq:descent-selected-vertical-correlation} by $H/B$. From $B\leq Q\leq X^{\varepsilon/10}$ we have $H\geq100B^4$ for sufficiently large $X$. We partition $I$ into $O(B^4)$ consecutive intervals of cardinality at most $H/(100B^4)$, and intersect them with $P$ and the residue classes modulo the period $t\leq B$ of $\gamma(\cdot)\widehat\Gamma$. This partitions $P$ into $O(B^5)$ progressions, on each of which $\gamma(n)\widehat\Gamma$ is constant.

For each such progression $\tilde P$, we choose $n_{\tilde P}\in\tilde P$. By smoothness of $\varepsilon$, we have
\begin{align*}
 d_{\widehat G}(\varepsilon(n),\varepsilon(n_{\tilde P}))
 \leq \frac{B|n-n_{\tilde P}|}{H}
 \leq \frac{1}{100B^3}
 \qquad\text{for every }n\in\tilde P.
\end{align*}
Since $\gamma(n)\widehat\Gamma=\gamma(n_{\tilde P})\widehat\Gamma$ on $\tilde P$, right invariance of the group metric and the Lipschitz bound for $\widehat F$ give
\begin{align*}
 \Bigl|\widehat F(\varepsilon(n)g_*(n)\gamma(n)\widehat\Gamma)
       -\widehat F(\varepsilon(n_{\tilde P})g_*(n)\gamma(n_{\tilde P})\widehat\Gamma)\Bigr|\leq B\,d_{\widehat G}(\varepsilon(n),\varepsilon(n_{\tilde P}))
 \leq\frac{1}{100B^2}
\end{align*}
for every $n\in \tilde{P}$.
Using $|\Lambda-\Lambda_w^\dagger|\leq B$, we bound the total change in the weighted correlation by
\begin{align*}
 \frac{1}{100B^2}\sum_{n\in P}|(\Lambda-\Lambda_w^\dagger)(n)|
 \leq\frac{H}{100B}.
\end{align*}
The original correlation in \eqref{eq:descent-selected-vertical-correlation} is at least $H/B$, so the triangle inequality and the bound $O(B^5)$ for the number of progressions give a progression $\tilde P'\subset P$ such that
\begin{equation}
 \label{eq:descent-selected-cell-correlation}
 \left|\sum_{n\in\tilde P'}(\Lambda-\Lambda_w^\dagger)(n)
 \widehat F(\varepsilon(n_0)g_*(n)\gamma_0\widehat\Gamma)\right|
 \gg B^{-5}\frac{H}{B}=\frac{H}{B^6},
\end{equation}
where $n_0\in\tilde P'$ and
\begin{align*}
 \gamma(n)\widehat\Gamma=\gamma_0\widehat\Gamma
 \qquad\text{for every }n\in\tilde P'.
\end{align*}

By \cite[Lemma B.6]{Leng}, we can choose $\gamma_0$ in this coset with Mal'cev coordinates bounded in absolute value by $1$. The element $\gamma(n_0)$ is $B$-rational, and $\gamma_0$ differs from $\gamma(n_0)$ by right multiplication by an element of $\widehat\Gamma$. Thus \cite[Lemma B.13(iii)]{Leng} gives rationality at most $B^{(D+2)^{O_k(1)}}$ for $\gamma_0$.

Define
\begin{align*}
 G'=G_*,
 \qquad \Gamma'=G_*\cap\gamma_0\widehat\Gamma\gamma_0^{-1},
 \qquad g'=g_*,\qquad  F'(h\Gamma')=\widehat F(\varepsilon(n_0)h\gamma_0\widehat\Gamma).
\end{align*}
For $\gamma'\in\Gamma'$, we have
\begin{align*}
 h\gamma'\gamma_0\widehat\Gamma=h\gamma_0\widehat\Gamma,
\end{align*}
so $F'$ is well defined.

We first bound the complexity of $G'/\Gamma'$. The rational conjugation result \cite[Lemma B.15]{Leng} shows that
\begin{align*}
 \gamma_0^{-1}G_*\gamma_0
\end{align*}
is $B^{(D+2)^{O_k(1)}}$-rational. We apply \cite[Lemma B.12]{Leng} to obtain an adapted Mal'cev basis for
\begin{align}\label{eq:conjugate-subnilmanifold}
 (\gamma_0^{-1}G_*\gamma_0)/
 ((\gamma_0^{-1}G_*\gamma_0)\cap\widehat\Gamma).
\end{align}
Conjugating this basis by $\gamma_0$ gives a basis for $G'/\Gamma'$. Since each $\widehat G_i$ is normal in $\widehat G$, the basis is adapted to the filtration $(G'_i)_{i\geq 0}=(G_*\cap\widehat G_i)_{i\geq 0}$. This implies that $G'/\Gamma'$ has complexity $\leq B^{(D+2)^{c_3}}$
for a sufficiently large constant $c_3=c_3(k)$.

To bound the Lipschitz norm, we write
\begin{align*}
 F'(h\Gamma')
 =\widehat F\bigl(\varepsilon(n_0)\gamma_0
   (\gamma_0^{-1}h\gamma_0)\widehat\Gamma\bigr).
\end{align*}
By the bound $d_{\widehat G}(\varepsilon(n_0),1_{\widehat G})\leq B$, the bounded coordinates of $\gamma_0$, and \cite[Lemma B.3]{Leng}, we see that the Mal'cev coordinates of $\varepsilon(n_0)\gamma_0$ are bounded by $B^{(D+2)^{O_k(1)}}$. The left-translation estimate of \cite[Lemma B.4]{Leng} therefore bounds the Lipschitz norm of
\begin{align*}
 z\widehat\Gamma\longmapsto
 \widehat F(\varepsilon(n_0)\gamma_0z\widehat\Gamma)
\end{align*}
by $B^{(D+2)^{O_k(1)}}$. The metric comparison \cite[Lemma B.9]{Leng} gives the same Lipschitz bound on the conjugate subnilmanifold~\eqref{eq:conjugate-subnilmanifold}. Conjugating back by $\gamma_0$, with the corresponding adapted Mal'cev basis on $G'/\Gamma'$, therefore gives
\[
 \|F'\|_{\Lip}\leq B^{(D+2)^{c_3}},
\]
after increasing $c_3$.

The induced filtration has degree at most $k-1$, the group $G'$ has dimension at most $D$ and step at most $s-1$, and $F'$ is $1$-bounded. Since $B=R^{(D+2)^{c_1+c_2}}$, choosing $C$ sufficiently large absorbs these bounds and the loss $O(B^6)$ into $Q$. Equation \eqref{eq:descent-selected-cell-correlation}, with $\tilde P'\subset I$, therefore gives
\begin{align*}
 \left|\sum_{n\in I}(\Lambda-\Lambda_w^\dagger)(n)F'(g'(n)\Gamma')\right|^*
 \geq H/Q.
\end{align*}
\end{proof}

We are now in a position to prove Theorem~\ref{thm:corrected-main} with two additional restrictions, namely $w=\exp((\log X)^{1/10})$ and an upper bound for $H$. 

\begin{proposition}[Uniformity for the maximal cutoff]
\label{prop:large-cutoff-uniformity}
Fix an integer $k\geq2$ and $0<\eps<3/16$. There is a constant $c=c(k,\eps)>0$ with the following property. Let $X\geq3$, let $H\geq1$, and let $w\geq2$. Suppose that
\begin{equation}
 \label{eq:large-cutoff-H-range}
 X^{5/8+\eps}\leq H\leq X^{1-\eps}
\end{equation}
and
\begin{equation}
 \label{eq:large-cutoff-w-range}
 w=\exp\bigl((\log X)^{1/10}\bigr).
\end{equation}
Then
\begin{equation}
 \label{eq:large-cutoff-uniformity}
 \|\Lambda-\widetilde\Lambda_w\|_{U^k(X,X+H]}\ll_{k,\eps}\exp\bigl(- (\log X)^c\bigr).
\end{equation}
\end{proposition}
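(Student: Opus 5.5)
We argue by contradiction, iterating Proposition~\ref{prop:step-descent} until the step reaches zero. Set $\delta=\exp(-(\log X)^{c})$ and suppose $\|\Lambda-\widetilde\Lambda_w\|_{U^k(X,X+H]}\geq\delta$. The function $f=(\Lambda-\widetilde\Lambda_w)/(C_0\log X)$ is $1$-bounded on $I=(X,X+H]$ and has $U^k(I)$ norm at least $\delta/(C_0\log X)$. Lemma~\ref{lem:LSS-inverse} then gives a nilmanifold of degree and step at most $k-1$, together with dimension, complexity, Lipschitz and correlation parameters. The dimension satisfies $D\leq(\log X)^{2cA_k}$, and the other parameters are at most $\exp((\log X)^{2cA_k})$. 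Multiplying back by $C_0\log X$ gives \eqref{eq:maximalF} with $R_0=\exp((\log X)^{3cA_k})$, which dominates $D$ and $(\log X)^C$.

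\textbf{Choosing the Siegel case.} Since $w=\exp((\log X)^{1/10})$, any exceptional conductor satisfies $q\leq w$. If none exists, or if $q\leq R_0$, we are in case (a) of Proposition~\ref{prop:step-descent} with $\Lambda_w^\dagger=\widetilde\Lambda_w$. If $q>R_0$, we would prefer case (b). However, case (b) requires $q>Q$ along the whole iteration, and the relevant $Q$ increases at each step. To avoid this, I would instead compare $\widetilde\Lambda_w$ with $\Lambda_w$ directly. Lemma~\ref{lem:local-exceptional} gives $\|\Lambda_w\chi(\cdot)(\cdot)^{\beta-1}\|_{U^k(I)}\ll q^{-2^{-k-2}}+\exp(-c(\log X)^{4/5})$. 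Fix the largest threshold $R_{\rm fin}$ that appears in the iteration. If $q>R_{\rm fin}$, this difference is at most $\delta/2$, and we pass to $\Lambda_w$ by the triangle inequality, running case (b) throughout. Otherwise $q\leq R_{\rm fin}$. Then case (a) is valid at every stage: its hypothesis is $q\leq R$, and $R$ only grows during the iteration.

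\textbf{The iteration.} Put $R_1=R_0$ and $R_{j+1}=R_j^{(D+2)^C}$. Applying Proposition~\ref{prop:step-descent} $k-1$ times lowers the step by one each time. The dimension stays at most $D$, and after $s\leq k-1$ steps the correlation is at least $H/R_{k}$. Here
\begin{align*}
R_k\leq R_0^{(D+2)^{Ck}}=\exp\bigl((\log X)^{3cA_k+2cA_kCk+o(1)}\bigr).
\end{align*}
We then choose $c$ small enough in terms of $k,\eps$ that $R_k^{C}\leq\exp((\log X)^{1/20})\leq w$. This verifies the hypothesis $Q^C\leq w$ at every stage, and lets us take $R_{\rm fin}=R_k$ above.

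\textbf{Step zero.} A step-zero nilmanifold is a point, so $F'$ is a constant of modulus at most $1$. The resulting bound says that some progression $P\subset I$ satisfies $|\sum_{n\in P}(\Lambda-\Lambda_w^\dagger)(n)|\geq H/R_k$. Such a progression has $|P|\geq H/(R_kC_0\log X)$, and hence modulus at most $2R_k^2\leq w$. Proposition~\ref{prop:short-major-arcs} applies because $H\geq X^{7/12+\eps}$. In case (a) we use \eqref{eq:short-major-arc-progression-bound}. In case (b) the conductor $q>R_k^2\geq\textnormal{mod}(P)$ cannot divide the modulus, so \eqref{eq:short-major-arc-plain-progression-bound} applies. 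Either way the sum is at most $H\exp(-(\log X)^{1/10})$, which is a contradiction.

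\textbf{Main obstacle.} I expect the main difficulty to be bookkeeping rather than mathematics. One must check that the tower $R_j$, with exponent $(D+2)^{C}$ and $D$ polylogarithmic, stays quasipolynomial. One must also check that it stays below $w^{1/C}$ and below the $\exp((\log X)^{1/10})$ thresholds of the decomposition lemma, and that the Siegel case split is made once, consistently for all $k-1$ steps.
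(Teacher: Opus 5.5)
Your overall architecture matches the paper's. You apply the inverse theorem, make a one-time Siegel case split via Lemma~\ref{lem:local-exceptional}, apply Proposition~\ref{prop:step-descent} at most $k-1$ times, and finish with Proposition~\ref{prop:short-major-arcs} at step zero. However, the case split as written has a step that fails.

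\textbf{The intermediate range is not covered.} For $R_0<q\leq R_{\mathrm{fin}}$ you claim that case~(a) is valid at every stage because $R$ only grows. But case~(a) of Proposition~\ref{prop:step-descent} requires the conductor to be at most the \emph{input} parameter $R$ of that application. In the first application this parameter is $R_0<q$, so that application is not justified. Nor can you switch to $\Lambda_w$ in this range. Lemma~\ref{lem:local-exceptional} would indeed make the difference small, but case~(b) needs $q>Q$ for every stage's output $Q$, and the outputs go up to $R_{\mathrm{fin}}\geq q$. Because both regimes start from the same value $R_0$, these intermediate conductors are handled by neither.

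\textbf{How to repair it.} Decouple the starting value of the chain from the conductor threshold. All hypotheses of Proposition~\ref{prop:step-descent} only weaken as $R$ grows: complexity and Lipschitz bounds at most $R$, correlation at least $H/R$, and $R\geq\max\{D,2,(\log X)^C\}$. So when $q\leq R_{\mathrm{fin}}$ you may start the chain at $R_{\mathrm{fin}}$ itself. It then ends at most at $R_{\mathrm{fin}}^{(D+2)^{Ck}}\leq\exp((\log X)^{O_{k,\eps}(c)})$, which still satisfies $Q^C\leq w$ after shrinking $c$. There is no circularity, since case~(a) imposes nothing on $q$ beyond $q\leq R$. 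This is exactly the paper's device. It fixes the threshold $q_0=\exp((\log w)^{1/2})$, starts case~(a) at $R_0=q_0$, and starts case~(b) at $R_0=\exp((\log w)^{1/4})$. Since $(D+2)^{kC}\leq(\log w)^{1/4}$, the whole case~(b) chain then stays below $q_0<q$.

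\textbf{Two smaller points.}
\begin{enumerate}
\item In case~(b) you assume only $q>R_{\mathrm{fin}}=R_k$. At step zero, however, you use $q>R_k^2\geq\textnormal{mod}(P)$, while your own bound is $\textnormal{mod}(P)\leq2R_k^2$. You need either a larger threshold or one spare round in the chain. The paper uses the latter: the final parameter satisfies $R_*\leq R_{k-1}$, hence $2R_*^2\leq R_k$.
\item In case~(b) you apply the triangle inequality to Gowers norms before invoking Lemma~\ref{lem:LSS-inverse}. So $R_{\mathrm{fin}}$ must be defined from the a priori bounds $D\leq(\log X)^{2cA_k}$ and $R_0=\exp((\log X)^{3cA_k})$, not from the actual output of the inverse theorem. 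Your bounds permit this, but the case split has to come first in the argument.
\end{enumerate}
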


\begin{proof}
We may assume that $X$ is sufficiently large in terms of $k,\eps$ by adjusting the implicit constant. Let $I=(X,X+H]\cap\Z=\lfloor X\rfloor+[N]$, where $N=|I|$. Since $|N-H|<1$, we have
\begin{equation}
 \label{eq:main-integer-length-range}
 X^{5/8+\eps/2}\leq N\leq X^{1-\eps/2}.
\end{equation}

\emph{Reduction to nilsequence correlation.} Set 
$$q_0=\exp((\log w)^{1/2}).$$
If a level $w$ exceptional conductor $q$ exists and $q\leq q_0$, take 
\begin{align}\label{eq:LambdaR01}
\Lambda_w^\dagger=\widetilde\Lambda_w\qquad \textnormal{and}\qquad R_0=q_0.    
\end{align} 
Otherwise take 
\begin{align}\label{eq:LambdaR02}
\Lambda_w^\dagger=\Lambda_w\qquad \textnormal{and}\qquad R_0=\exp((\log w)^{1/4}). 
\end{align}
Observe that the difference $\widetilde\Lambda_w-\Lambda_w^\dagger$
vanishes unless the level $w$ exceptional conductor exceeds
$q_0$. Thus Lemma \ref{lem:local-exceptional} produces,
for some $c_0=c_0(k,\varepsilon)>0$, the bound
\begin{equation}
 \label{eq:large-conductor-removal}
 \|\widetilde\Lambda_w-\Lambda_w^\dagger\|_{U^k(I)}
 \ll_{k,\eps}q_0^{-2^{-k-2}}+\exp\bigl(-c_0(\log X)^{4/5}\bigr)\ll \exp(-2(\log X)^{c}),
\end{equation}
if $c>0$ is chosen small enough. 

Suppose for the sake of contradiction that
\begin{equation}
 \label{eq:main-failure-assumption}
 \|\Lambda-\widetilde\Lambda_w\|_{U^k(I)}>\exp(-(\log X)^{c}).
\end{equation}
From~\eqref{eq:Mertens} we have $|(\Lambda-\Lambda_w^{\dagger})(n)|\ll\log X$ for $n\in I$. Thus $$f=2\exp(-(\log X)^{c})(\Lambda-\Lambda_w^{\dagger})$$
is $1$-bounded, and  \eqref{eq:main-failure-assumption} and \eqref{eq:large-conductor-removal} give 
$$\|f\|_{U^k(I)}>\exp(-2(\log X)^{c}).$$
We apply the quasipolynomial inverse theorem (Lemma \ref{lem:LSS-inverse}) with $\delta=\exp(-2(\log X)^{c})$ to obtain a nilmanifold $G/\Gamma$ of degree and step at most $k-1$, of dimension at most $D\geq2$, and of complexity at most $M$, and a $1$-bounded nilsequence $F(g(\cdot)\Gamma)$ of Lipschitz norm at most $M$, such that
\begin{equation}
\label{eq:main-normalised-correlation}
 \left|\sum_{n\in I}f(n)F(g(n)\Gamma)\right|\geq\rho N
\end{equation}
where
\begin{equation}
 \label{eq:deterministic-inverse-bounds}
 \max\{M,\rho^{-1}\}\leq\exp((\log w)^{1/4}),\qquad (D+2)^{kC}\leq (\log w)^{1/4},
\end{equation}
with $C$ the constant in Proposition \ref{prop:step-descent}.
Hence, by the choice of $R_0$ in~\eqref{eq:LambdaR01} or~\eqref{eq:LambdaR02}, we have
\begin{equation}
 \label{eq:main-initial-correlation}
 \left|\sum_{n\in I}(\Lambda-\Lambda_w^{\dagger})(n)F(g(n)\Gamma)\right|\geq\rho N \geq N/R_0.
\end{equation}

\emph{Step descent.} Define
\begin{equation}
 \label{eq:stage-reserve-inequalities}
 R_j=R_0^{(D+2)^{jC}}\qquad\text{for }0\leq j\leq k.
\end{equation}
Equation \eqref{eq:deterministic-inverse-bounds} gives $\log R_k\leq (\log w)^{3/4}$ in both cases~\eqref{eq:LambdaR01} and~\eqref{eq:LambdaR02}, and $R_k\leq q_0$ when $\Lambda_w^\dagger=\Lambda_w$. Since $\log w\leq(\log X)^{1/10}$, we have
\begin{align}\label{eq:R0bounds}
 R_0\geq\max\{D,2,(\log X)^C\},\log R_k\leq (\log X)^{3/40},\qquad R_k^C\leq w.
\end{align}
In case~\eqref{eq:LambdaR01} the exceptional conductor is at most $R_0$. In case~\eqref{eq:LambdaR02} no exceptional pair exists, or its conductor exceeds $q_0\geq R_k$.

Write $s$ for the step of $G$ and set $F_0=F$. Starting from \eqref{eq:main-initial-correlation}, we repeatedly apply Proposition \ref{prop:step-descent} on $I$, with dimension bound $D$ and input bound $R_j$. The output bound is $R_{j+1}$, and the estimates~\eqref{eq:R0bounds} and~\eqref{eq:stage-reserve-inequalities} provide the size and conductor hypotheses of that proposition. Each application of the proposition keeps the degree and dimension bounds the same, lowers the step, and gives on some nilmanifold $G_{j+1}/\Gamma_{j+1}$ of step $\leq s-j-1$ of complexity at most $R_{j+1}$ a $1$-bounded Lipschitz function $F_{j+1}$ for which
\begin{equation}
 \label{eq:main-stage-correlation}
 \left|\sum_{n\in I}(\Lambda-\Lambda_w^\dagger)(n)
 F_{j+1}(g_{j+1}(n)\Gamma_{j+1})\right|^*
 \geq N/R_{j+1},\qquad
 \|F_{j+1}\|_{\Lip}\leq R_{j+1}.
\end{equation}
After at most $k-1$ applications we reach step zero. We write $R_*=R_{j_*}$ for the bound at that stage, where $j_*\leq k-1$.

\emph{The final contradiction.} The final nilsequence $F_{j_*}$ is constant of modulus at most one, so some progression $P\subset I$ satisfies
\begin{equation}
 \label{eq:final-reserved-bounds}
 \left|\sum_{n\in P}(\Lambda-\Lambda_w^{\dagger})(n)\right|\geq N/R_*.
\end{equation}
Since $|\Lambda-\Lambda_w^{\dagger}|\leq R_0\leq R_*$ and $R_k\leq X^{\varepsilon/100}$, we have $|P|\geq N/R_*^2\geq2$. Therefore
\begin{align*}
 \textnormal{mod}(P)\leq\frac{N-1}{|P|-1}\leq2R_*^2\leq R_k,
\end{align*}
where the last inequality follows from $R_*\leq R_{k-1}$ and $(D+2)^C\geq4$.

Finally, we apply the major arc result of Proposition \ref{prop:short-major-arcs} on $I$. The required length condition follows from \eqref{eq:main-integer-length-range}, and the modulus is admissible as $R_k^C\leq w$ and $\log R_k\leq (\log X)^{3/40}$. In case~\eqref{eq:LambdaR02} the exceptional conductor, if it exists, exceeds $R_k$ and hence does not divide the modulus of $P$. Hence, in any case, for some $c'=c'(k,\varepsilon)>0$ we have
\begin{align*}
 \left|\sum_{n\in P}(\Lambda-\Lambda_w^{\dagger})(n)\right|\ll_{k,\eps}N\exp\bigl(-c'(\log X)^{1/10}\bigr)\ll N/R_k^2,
\end{align*}
say. This contradicts \eqref{eq:final-reserved-bounds}, since $R_*\leq R_k$. Thus \eqref{eq:main-failure-assumption} cannot hold, proving the claimed bound~\eqref{eq:large-cutoff-uniformity} for sufficiently large $X$, as desired.
\end{proof}

Finally, we now prove the full Theorem~\ref{thm:corrected-main} by comparing model functions.

\begin{proof}[Proof of Theorem \ref{thm:corrected-main}]
We may assume that $X$ is sufficiently large in terms of $\varepsilon$ and $k$ by adjusting implicit constants.

We first prove the bound when $0<\eps<3/16$ and 
\begin{align}\label{eq:Hrange}
X^{5/8+\eps}\leq H\leq X^{1-\eps}.    
\end{align}
For sufficiently large $X$, we set $z=\exp((\log X)^{1/10})$. This cutoff satisfies \eqref{eq:large-cutoff-w-range}, so Proposition \ref{prop:large-cutoff-uniformity} gives, for some $c_1=c_1(k,\eps)>0$,
\begin{align*}
 \|\Lambda-\widetilde\Lambda_z\|_{U^k(X,X+H]}
 \ll_{k,\eps}\exp\bigl(-(\log X)^{c_1}\bigr).
\end{align*}
Then the model comparison of Lemma \ref{lem:local-model-comparison} applies with $\eps/2$ in place of $\eps$ for large $X$
and gives
\begin{align*}
 \|\widetilde\Lambda_z-\widetilde\Lambda_w\|_{U^k(X,X+H]}\ll_{k,\eps}w^{-a_k}+\exp(-c(\log X)^{4/5}).    
\end{align*}
Applying the triangle inequality gives \eqref{eq:corrected-main-bound} after decreasing $c$. Now the result follows in the range~\eqref{eq:Hrange}.

Now let $0<\eps<5/8$ and let $H$ satisfy \eqref{eq:main-H-range}.  We set $$\eta=\varepsilon/100,\qquad  I=(X,X+H]\cap\Z,\qquad N=|I|,\qquad L=\lfloor X^{5/8+\eps/2}\rfloor.$$
We partition $I$
into the consecutive integer intervals
\begin{align*}
 J_j=(X+jL,X+(j+1)L]\cap\Z
 \quad\text{for }0\leq j<B\coloneqq \lfloor N/L\rfloor
\end{align*}
and a final interval $R$ of fewer than $L$ integers.  Then $|J_j|=L$ for all $j$, and the left endpoint $Y_j=X+jL$ of $J_j$ belongs to $[X,3X]$. Moreover, for large $X$, we have
\begin{align*}
 Y_j^{5/8+\eta}\leq L\leq Y_j^{1-\eta},
 \qquad
 w\leq\exp\bigl((\log Y_j)^{1/10}\bigr).
\end{align*}
The result in the shorter range~\eqref{eq:Hrange} with parameter $\eta$ applies to every $J_j$.  We write 
$$f=\Lambda-\widetilde\Lambda_w,\qquad \alpha_k=(k+1)/2^k,\qquad \rho=w^{-a_k}+\exp(-(\log X)^{c_\eta}),$$ where $c_\eta>0$ is the exponent in that result.  Since $\|\1_{J_j}\|_{U^k(\Z)}\asymp_k L^{\alpha_k}$, we have
\begin{equation}
 \label{eq:block-gowers-bound}
 \|f\1_{J_j}\|_{U^k(\Z)}\ll_{k,\eps} \rho L^{\alpha_k}
 \qquad\text{for }0\leq j<B.
\end{equation}
By the triangle inequality and the bound $|f\1_{I}|\ll \log X$, we also have
\begin{align}\label{eq:block-error}
\|f\1_{I}\|_{U^k(\mathbb{Z})}=\|\sum_{0\leq j<B}f\1_{J_j}\|_{U^k(\Z)}+O(L^{\alpha_k}(\log X)).    
\end{align}
We expand out
\begin{align*}
 \left\|\sum_{0\leq j<B}f\1_{J_j}\right\|_{U^k(\mathbb Z)}^{2^k}
 =\sum_{\mathbf j\in([0,B)\cap\Z)^{2^k}}\,
 \sum_{x,h_1,\ldots,h_k\in\mathbb Z}\,
 \prod_{\omega\in\{0,1\}^k}
 (f\1_{J_{\mathbf j(\omega)}})(x+\omega\cdot\mathbf h).
\end{align*}
Note that the number of tuples $\mathbf{j}$ that have a nonzero contribution is $O(B^{k+1})$, since if $x\in J_{\mathbf{j}(\mathbf{0})}$ and $x+\mathbf{e}_{\ell}\cdot \mathbf{h}\in J_{\mathbf{j}(\mathbf{e}_{\ell})}$ for all $1\leq \ell\leq k$, then for any $\omega\in \{0,1\}^k$ we have
\begin{align*}
 x+\omega\cdot\mathbf h
 \in (1-|\omega|)J_{\mathbf j(\mathbf0)}
 +\sum_{\ell:\,\omega_\ell\ne0}J_{\mathbf j(\mathbf e_\ell)},
\end{align*}
where $|\omega|=\omega_1+\cdots+\omega_k$. The set on the right-hand side is contained in an interval of length at most $(2k-1)L$.

Now, from the Gowers Cauchy--Schwarz inequality, \eqref{eq:block-gowers-bound} and~\eqref{eq:block-error} we see that
\begin{align*}
\|f\1_{I}\|_{U^k(\mathbb{Z})}\ll_{k,\varepsilon} B^{(k+1)/2^k}\cdot \rho L^{\alpha_k}+O(L^{\alpha_k}(\log X))\ll \rho N^{(k+1)/2^k}.     
\end{align*}
Decreasing the exponent $c$ if necessary, the claimed bound~\eqref{eq:corrected-main-bound} follows.
\end{proof}

\end{document}